\numberwithin{equation}{section}
\newtheorem{Theorem}{Theorem}[section]
\newtheorem{Lemma}[Theorem]{Lemma}
\newtheorem{Proposition}[Theorem]{Proposition}
 { \theoremstyle{definition}
\newtheorem{Remark}[Theorem]{Remark} }
\DeclareMathOperator{\Complex}{\mathbb{C}}
\DeclareMathOperator{\rank}{rank}
\DeclareMathOperator{\res}{res}
\DeclareMathOperator{\Ibb}{\mathbb{I}}
\DeclareMathOperator{\wgt}{wgt}
\newcommand{\rmd}{\mathrm{d}}
\newcommand{\rmi}{\mathrm{i}}
\newcommand{\Arf}{\mathrm{Arf}}
\newcommand{\Jac}{\mathrm{Jac}}
\begin{document}
\allowdisplaybreaks

\newcommand{\arXivNumber}{1709.10167}

\renewcommand{\PaperNumber}{074}

\FirstPageHeading

\ShortArticleName{On Regularization of Second Kind Integrals}

\ArticleName{On Regularization of Second Kind Integrals}

\Author{Julia BERNATSKA~$^\dag$ and Dmitry LEYKIN~$^\ddag$}

\AuthorNameForHeading{J.~Bernatska and D.~Leykin}

\Address{$^\dag$~National University of Kyiv-Mohyla Academy, 2 H.~Skovorody Str., 04655 Kyiv, Ukraine}
\EmailD{\href{mailto:bernatska.julia@ukma.edu.ua}{bernatska.julia@ukma.edu.ua}, \href{mailto:jbernatska@gmail.com}{jbernatska@gmail.com}}

\Address{$^\ddag$~44/2 Harmatna Str., apt.~32, 03067 Kyiv, Ukraine}
\EmailD{\href{mailto:dmitry.leykin@gmail.com}{dmitry.leykin@gmail.com}}

\ArticleDates{Received October 03, 2017, in final form July 02, 2018; Published online July 21, 2018}

\Abstract{We obtain expressions for second kind integrals on non-hyperelliptic $(n,s)$-curves. Such a curve possesses a Weierstrass point at infinity which is a branch point where all sheets of the curve come together. The infinity serves as the basepoint for Abel's map, and the basepoint in the definition of the second kind integrals. We define second kind differentials as having a pole at the infinity, therefore the second kind integrals need to be regularized. We propose the regularization consistent with the structure of the field of Abelian functions on Jacobian of the curve. In this connection we introduce the notion of regularization constant, a uniquely defined free term in the expansion of the second kind integral over a local parameter in the vicinity of the infinity. This is a~vector with components depending on parameters of the curve, the number of components is equal to genus of the curve. Presence of the term guarantees consistency of all relations between Abelian functions constructed with the help of the second kind integrals. We propose two methods of calculating the regularization constant, and obtain these constants for $(3,4)$, $(3,5)$, $(3,7)$, and $(4,5)$-curves. By the example of $(3,4)$-curve, we extend the proposed regularization to the case of second kind integrals with the pole at an arbitrary fixed point. Finally, we propose a scheme of obtaining addition formulas, where the second kind integrals, including the proper regularization constants, are used.}

\Keywords{second kind integral; regularization constant; Abelian function relation; Jacobi inversion problem; addition formula}

\Classification{32A55; 32W50; 35R01; 14H40; 32A15; 14H45}

\section{Introduction}

In this paper we consider second kind integrals on algebraic curves, which are second kind $0$-forms $r(x,y;[\gamma])$ obtained from second kind differential forms $\rmd r$
\begin{gather}\label{SecKIntA}
 r(x,y;[\gamma]) = \int_{[\gamma]}^{(x,y)} \rmd r(\tilde{x},\tilde{y}),
\end{gather}
where $[\gamma]$ denotes a class of homotopically equivalent paths {from a fixed basepoint to $(x,y)$, by $(\tilde{x},\tilde{y})$ we denote a point which serves as a dummy parameter of integration}. Definitions of second kind integrals were discussed in~\cite{Hodge}, where the theory of second kind differentials was developed from the theory of stacks adapted for use in the theory of functions of several complex variables. Here we define a second kind differential form as a locally exact meromorphic form, whose all singularities are poles of order greater than one.

In what follows we deal with a particular class of curves called $(n,s)$-curves, possessing a~Weierstrass point at infinity, which is the branch point where all sheets of the curve come together. We assign the infinity to the basepoint of Abel's map on the curve. An accurate definition of the $(n,s)$-curve is given below in Section~\ref{ss:nsCurve}. Throughout the paper we consider second kind differential forms with the only singularity at the infinity, except Section~\ref{IntArbP} where the singularity is located at an arbitrary point. We call the antiderivative of a second kind $1$-form $\rmd r$ with the basepoint at the infinity a second kind integral $r$, namely,
\begin{gather}\label{SecKInt}
 r(x,y) = \int_{\infty}^{(x,y)} \rmd r(\tilde{x},\tilde{y}),
\end{gather}
and define the latter on the fundamental domain of a genus $g$ $(n,s)$-curve. Suppose the curve has a~homology basis of $2g$ cycles $\{\mathfrak{a}_i, \mathfrak{b}_i \,|\, i=1,\dots,g\}$, all disjoint except for each pair~$\mathfrak{a}_i$ and~$\mathfrak{b}_i$ intersecting at one point. The fundamental domain is the one-connected domain obtained by cutting the curve along all~$2g$ cycles of the homology basis and $g$ paths connecting the basepoint with $g$ intersection points. Evidently, the integral~\eqref{SecKInt} requires regularization at the basepoint, which is the only singularity point of the integrand function.

The idea of regularization is adopted from the definition of Weierstrass zeta-function. On the Weierstrass normal form of elliptic curve $0=f(x,y)=-y^2 + 4x^3 - g_2 x - g_3$, which is the simplest $(n,s)$-curve, the second kind integral is defined by
\begin{gather*}
 r(x,y) = \int_{\infty}^{(x,y)} \frac{\tilde{x}\, \rmd \tilde{x}}{\partial_{\tilde{y}} f}.
\end{gather*}
Here and throughout the text we denote $\partial f/\partial y$ by $\partial_y f$. Let
\begin{gather*} (x,y)=\big(\wp(u),\wp'(u)\big)= \big(u^{-2},{-}2u^{-3}\big)+O(1)\end{gather*} be a parametrization in the vicinity of infinity, namely $(x,y)\to\infty$ as $u\to 0$, where $u$ serves as a local parameter. With the standard Abel's map $\mathcal{A}(x,y) = \int_{\infty}^{(x,y)} (\partial_{\tilde{y}} f)^{-1}\rmd \tilde{x}$ the second kind integral transforms into zeta function $\zeta \big(\mathcal{A}(x,y)\big) = r(x,y)$. The textbook definition of Weierstrass zeta-function, see, e.g., \cite[Chapter~XX, Section~20.4]{Whittaker}, serves as a regularization of the second kind integral
\begin{gather*}
 \zeta(u) = \frac{1}{u} - \int_{0}^{u} \bigg(\wp(\tilde{u}) - \frac{1}{\tilde{u}^2}\bigg) \rmd \tilde{u}.
\end{gather*}

In the present paper we extend this regularization to a wider collection of curves. In this connection, a constant arises to be added to the regularized second kind integral, we call it a regularization constant. As shown in \cite[Section~193]{BakerAF}, where the notion of a regularization constant arose, these constants vanish in the case of hyperelliptic curves. This can explain why it was not revealed before. In the non-hyperelliptic case, the regularization constant does not vanish, and plays an important role when the second kind integral is used to obtain relations between Abelian functions defined on the Jacobi variety of the curve. In particular, for this purpose we employ the primitive function
\begin{gather}\label{PsiDef0}
 \psi(x,y) = \exp\left\{-\int_{\infty}^{(x,y)} r(\tilde{x},\tilde{y})^{t} \rmd u(\tilde{x},\tilde{y}) \right\}
\end{gather}
introduced in \cite{BL2005}, where $\{\rmd u(x,y),\rmd r(x,y)\}$ form a special cohomology basis, and $r(x,y)$ is the antiderivative~\eqref{SecKInt} of the $\rmd r(x,y)$. In general, the primitive function $\psi(x,y;[\gamma])$ also depends on a~path $\gamma$ from the basepoint to a point~$(x,y)$, and the second kind integral is defined by~\eqref{SecKIntA}. As above, $[\gamma]$ denotes a class of homotopically equivalent paths.

The paper is organized as follows. In Section~\ref{s:Prel} we briefly recall the notions of $(n,s)$-curve and primitive function, we explain how to construct the special cohomology basis, and specify the curves under consideration. Section~\ref{s:RegInt} is devoted to the idea of regularization of the second kind integral~\eqref{SecKInt}, also the special cohomology bases on the curves under consideration are specified.

In Section \ref{s:Reg34} we show how to produce relations between Abelian functions defined on the Jacobian of $\mathcal{V}_{(3,4)}$ with the help of equality \eqref{BLRC34}, which involves the primitive function~$\psi$. Si\-mi\-lar computations in the case of $(3,5)$-curve are shown in Appendix~\ref{A:BLRs}. We emphasize that this method gives consistent relations between Abelian functions only with a correct choice of regularization constant in the definition of $r(x,y)$. Therefore, we could say that the regularization constant is unique, and consistent with the structure of Abelian function field.

The following proposition summarizes results obtained in the paper. Each regularization constant $c_{(n,s)}$ relates to the second kind integral on $(n,s)$-curve with the special cohomology basis.
\begin{Proposition}
In some non-hyperelliptic cases regularization constants are the following
 \begin{gather*}
 c_{(3,4)}(\lambda) = \big(0, -\tfrac{1}{3}\lambda_2, {-}\tfrac{1}{6}\lambda_5 \big)^t,\\
 c_{(3,5)}(\lambda) = \big({-}\tfrac{1}{2}\lambda_1, -\tfrac{4}{15}\lambda_1^2, -\tfrac{1}{3}\lambda_4, {-}\tfrac{1}{6}\lambda_7 \big)^t,\\
 c_{(3,7)}(\lambda) = \big(0,{-}\tfrac{2}{3}\lambda_2,{-}\tfrac{2}{7}\lambda_2^2,{-}\tfrac{1}{2}\lambda_5,
 {-}\tfrac{1}{3}\lambda_8,{-}\tfrac{1}{6}\lambda_{11}\big)^t,\\
 c_{(4,5)}(\lambda) = \big(0, \tfrac{7}{45}\lambda_2, -\tfrac{3}{4}\lambda_3, \tfrac{3}{10}(\lambda_6-\lambda_3^2),
 -\tfrac{1}{2}\lambda_7, {-}\tfrac{1}{4}\lambda_{11} \big)^t.
\end{gather*}
\end{Proposition}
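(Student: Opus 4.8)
The plan is to prove all four formulas by the same two-step scheme announced in the introduction, carried out curve by curve. For a fixed $\mathcal{V}_{(n,s)}$ of genus $g$ I would first put the defining equation in $(n,s)$-normal form, read off the Weierstrass gap sequence $\{w_1<\dots<w_g\}$ at the infinity, and introduce a local parameter $\xi$ with $x\sim\xi^{-n}$ and $y\sim\xi^{-s}$, solving the curve equation recursively to expand $x$ and $y$ as Puiseux series in $\xi$. With these expansions I would fix the special cohomology basis $\{\rmd u_i,\rmd r_i\}_{i=1}^{g}$ entering~\eqref{PsiDef0}: the holomorphic forms $\rmd u_i$ are prescribed by the gaps, while each second kind form $\rmd r_i$ is determined, modulo exact and holomorphic forms, by the defining property of the special basis. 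A first structural observation already constrains the answer sharply: since every coefficient in sight is weighted-homogeneous in the curve parameters $\lambda$, the $i$-th component $c_i$ of $c_{(n,s)}$ must be a weighted-homogeneous polynomial of weight $w_i$ (the $i$-th gap), so it lives in the finite-dimensional span of monomials in $\lambda$ of that weight, and only finitely many rational coefficients remain to be found.

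\textbf{First method (local expansion).} Expanding each $\rmd r_i$ as a Laurent series in $\xi$, integrating term by term, and removing the explicit principal part produces an antiderivative whose constant term is a candidate for $c_i$. The ambiguity in the constant of integration is fixed by comparison with the canonically normalized zeta-function $\zeta_i=\partial_{u_i}\log\sigma(u)$, exactly as $r(x,y)=\zeta(\mathcal{A}(x,y))$ in the elliptic model; the expansion of $\zeta_i$ near the origin is dictated by the $\sigma$-function, whose leading part is the Schur polynomial attached to the gap sequence. The regularization constant is then the mismatch between the naive antiderivative of $\rmd r_i$ and this canonical $\zeta_i$, and the weight count above tells one in advance to exactly which order the two expansions must be pushed.

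\textbf{Second method (consistency of Abelian relations).} Independently, I would substitute the candidate $c_{(n,s)}$ into the primitive function $\psi$ and the equality~\eqref{BLRC34}, expand both sides on the fundamental domain, and demand that the resulting identities among Abelian functions on $\Jac$ close up. This demand is heavily over-determined and therefore selects $c_{(n,s)}$ uniquely; it is this uniqueness that justifies calling the constant consistent with the structure of the Abelian function field. Agreement of the two methods both fixes the residual integration constants and certifies the output, which for $(3,4)$ and $(3,5)$ is carried out in Section~\ref{s:Reg34} and Appendix~\ref{A:BLRs} and then repeated for $(3,7)$ and $(4,5)$.

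The main obstacle I anticipate is bookkeeping rather than principle: the Puiseux and $\sigma$-expansions must reach high enough order that the constant terms of all $g$ components stabilize, and the special second kind differentials must be normalized with complete care, since an error in their exact-form part shifts the corresponding $c_i$ by a spurious constant that the local method alone cannot detect. The $(4,5)$-curve is the most delicate: two parameters of the same weight combine in the fourth entry $\tfrac{3}{10}(\lambda_6-\lambda_3^2)$, so both the basis construction and the order-tracking have to resolve the mixing of equal-weight monomials, and here the cross-check against~\eqref{BLRC34} is what guarantees the correct relative coefficient.
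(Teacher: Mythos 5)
Your overall architecture — weight counting plus two independent methods, one local expansion against the sigma function and one consistency check through the bilinear identity — is the same as the paper's, and your weight-homogeneity observation is correct. But both methods, as you formulate them, have a gap that would stop the computation. In your first method you propose to fix the integration constant by comparing the antiderivative of $\rmd r_i$ with $\zeta_i\big(\mathcal{A}(\xi)\big)=\partial_{u_i}\log\sigma(u)\big|_{u=\mathcal{A}(\xi)}$, ``exactly as in the elliptic model''. For $g\geqslant 2$ this comparison object does not exist: the Abel image of a single point lies in the theta divisor, so $\sigma\big(\mathcal{A}(\xi)\big)\equiv 0$ (and, by Lemmas~\ref{L:PsiC34} and~\ref{L:PsiC35}, several low-order derivatives of $\sigma$ vanish identically there as well), hence $\zeta_i\big(\mathcal{A}(\xi)\big)$ is a $0/0$ expression and the elliptic identity $r=\zeta\circ\mathcal{A}$ has no componentwise analogue. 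Note also that term-by-term integration of the Laurent series of $\rmd r_i$ produces an antiderivative with zero constant term, so there is no ``candidate $c_i$'' to read off locally. The paper's actual first method replaces $\zeta_i$ by the primitive function: $\psi(\xi)$, defined by \eqref{PsiDef} so that $c$ enters through $\mathcal{A}^\ast$, is identified (up to a rational factor) with the \emph{first non-vanishing derivative} of $\sigma$ at $u=\mathcal{A}(\xi)$ --- e.g., $\partial_{u_2}P(\xi,u)|_{u\equiv 0}$ for $(3,4)$ and $\partial_{u_4}P(\xi,u)|_{u\equiv 0}$ for $(3,5)$ --- and equating $\log\psi(\xi)$, which is linear in the $c_i$, with the series computed from the sigma expansions \eqref{C34Sigma}, \eqref{C35Sigma} yields the linear system for $c$. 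This identification is the central idea of the method, and it is absent from your proposal.

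In your second method you assert that demanding the identities produced by \eqref{BLRC34} ``close up'' is heavily over-determined and selects $c$ uniquely. By itself it is not: each coefficient of a negative power of $\xi$ mixes the unknown $c_i$ with a priori unknown combinations of Abelian functions. For instance, for $(3,4)$ the $\xi^{-1}$ coefficient only says $\tfrac{2}{5}c_5+\tfrac{7\lambda_5}{30}=\tfrac{1}{6}\big(\wp_{1,1,1,2}-6\wp_{1,1}\wp_{1,2}+\lambda_2\wp_{1,2}\big)$, so a shift of $c_5$ can be absorbed into the (a priori unknown) constant term of the $\wp$-relation; nothing in the expansion alone separates the two. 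The paper closes this loop by deriving the needed relations independently (Lemmas~\ref{L:ind4WPC34}, \ref{L:ind4WPC35}, \ref{L:ind4WPC37}, \ref{L:ind4WPC45}): they come from the residue-theorem formula \eqref{ZetaDef} applied to the \emph{difference} of second kind integrals over two divisors --- taken as a difference precisely so that the regularization constant cancels --- followed by elimination among the resulting rational-function identities, e.g.\ relation \eqref{wp1112RelC34}, which is what finally pins down $c_5=-\lambda_5/6$. You need this (or the sigma expansion of the first method) as an external anchor; without it your ``consistency'' system is under-determined exactly in the entries of $c$. A minor further discrepancy: the paper does not run the first method for $(3,7)$ and $(4,5)$, since the sigma expansion there requires $2g$ weighted-homogeneous terms and becomes prohibitive, so the cross-check of the two methods you invoke for the $(4,5)$ entry $\tfrac{3}{10}\big(\lambda_6-\lambda_3^2\big)$ is not available there; that entry is fixed by the relations of Lemma~\ref{L:ind4WPC45} alone.
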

We do not have a general formula for $c_{(n,s)}$, however we have developed a technique of finding~$c_{(n,s)}$ for virtually any pair~$(n,s)$. Below we propose two methods of computing $c_{(n,s)}$. We demonstrate both of them by carrying out calculation for the case of $(3,4)$-curve. Other cases are much more cumbersome.

\section{Preliminaries}\label{s:Prel}

\subsection[$(n,s)$-curve]{$\boldsymbol{(n,s)}$-curve}\label{ss:nsCurve}
As mentioned above, in the paper we deal with $(n,s)$-curves \cite{BEL1999}.
They are defined for co-prime integers $n$ and $s$
as \begin{gather*}\mathcal{V}_{(n,s)}=\big\{(x,y)\in \Complex^2 \,|\, f_{(n,s)}(x,y) =0\big\},\end{gather*} where
\begin{gather}\label{nsCurve}
 f_{(n,s)}(x,y) = y^n + x^s + \sum_{i=0, j=0}^{s-2,n-2} \lambda_{ns-in- js} x^i y^j,
\end{gather}
$\lambda_{j}\in \Complex$, $\lambda_{k}=0$ whenever $k\leqslant 0$. Nondegenerate $(n,s)$-curve has genus $g = \frac{1}{2}(n - 1)(s - 1)$, {this is the maximal genus of the family of curves of the form~\eqref{nsCurve}}, see~\cite{BEL1999}. The polynomial $f_{(n,s)}$ is homogeneous with respect to Sato weight defined by $\wgt x = n$, $\wgt y=s$, thus $\wgt \lambda_j=j$. In the vicinity of $(x,y)=\infty$ we introduce a parameterization on $\mathcal{V}_{(n,s)}$ by the following formulas
\begin{gather*}
x(\xi)=-\xi^{-n}, \qquad y(\xi)=-\xi^{-s}\left((-1)^s+ \sum_{i=1}^{\infty} \mu_i(\lambda) \xi^i\right), \qquad f_{(n,s)}\big(x(\xi),y(\xi)\big)=0.
\end{gather*}
Note that $\wgt \xi = -1$, and $\mu_i(\lambda)$ are polynomials in $\lambda$ with rational coefficients, $\wgt \mu_i(\lambda) = i$.

\subsection{Basis of 1-forms}\label{ss:CoHomolBasis}
Fix the basis of holomorphic $1$-forms $\rmd u = \big(x^i y^j (\partial_y f_{(n,s)})^{-1}\rmd x\big)$, $i\geqslant 0$, $j\geqslant 0$ and $i n+j s\leqslant 2g-2$ ordered by descending Sato weight. Note that $\wgt \rmd u = ({-}w_1,\dots,{-}w_g)^t$, where $\{w_1,\dots,w_g\}$ is Weierstrass sequence obtained from $n$ and $s$ with the help of generating function $t^{w_1}+\cdots +t^{w_g}= (1-t)^{-1}- (1-t^{ns}) (1-t^n)^{-1}(1-t^s)^{-1}$. Further consider meromorphic $1$-forms with poles only at infinity. Up to globally exact forms any such $1$-form can be represented as a linear combination of $2g$ differentials $\big(x^i y^j (\partial_y f_{(n,s)})^{-1} \rmd x\big)$ with $0\leqslant i \leqslant s-2$ and $0\leqslant j \leqslant n-2$, which are holomorphic on $\mathcal{V}_{(n,s)}\backslash \infty$, and the condition $i n+j s\geqslant 2g$ singles out differentials that actually have a pole at infinity. One can form a vector $\rmd r$ of $\wgt \rmd r(\xi) = (w_1,\dots,w_g)^t$ which is subject to condition
\begin{gather}\label{DiffAjst}
\res_{\xi=0} \left(\int_0^{\xi} \rmd u(\tilde{\xi})\right) \rmd r(\xi)^t = 1_g.
\end{gather}
This condition completely determines the {principle part of $\rmd r(\xi)$.} Though the holomorphic part of $\rmd r(\xi)$ is inessential in what follows, it is {worth} to note that it can be chosen so that $\rmd r(x,y)$ and $\rmd u(x,y)$ form an associated system, see \cite[Section~138]{BakerAF} and \cite[p.~131]{EEL2000}. Below in actual calculations we use the {first $\mathcal{A}(x,y)$ and second kind integrals $\mathcal{A}^\ast(x,y)$ obtained from the chosen basis of $1$-forms}, namely,
\begin{gather*}\mathcal{A}(x,y) = \int_{\infty}^{(x,y)} \rmd u(\tilde{x},\tilde{y}),\qquad \mathcal{A}^\ast(x,y) = \int_{\infty}^{(x,y)} \rmd r(\tilde{x},\tilde{y}).\end{gather*}
So $\mathcal{A}\colon \mathcal{V} \mapsto \Jac(\mathcal{V})$ denotes the standard Abel's map. The meromorphic map $\mathcal{A}^\ast$ has a pole at infinity, and requires regularization. The regularization constants~$c_{(n,s)}$ computed in this paper relate to these particular second kind integrals.

\subsection{Primitive function}
The primitive function introduced by \eqref{PsiDef0} is employed by the both methods of computing the regularization constant. More accurately, we define it as
\begin{gather}\label{PsiDef}
 \psi(x,y) = \exp\left\{-\int_{\infty}^{(x,y)} \mathcal{A}^\ast(\tilde{x},\tilde{y})^{t} \rmd \mathcal{A}(\tilde{x},\tilde{y}) \right\}.
\end{gather}
In the definition an antiderivative $\mathcal{A}^\ast(x,y)$ of the second kind differential is supposed to contain the regularization constant when parametrized. In terms of local parameter $\xi$ the following representation of~\eqref{PsiDef} can be obtained
\begin{gather*}
\psi(\xi) = \xi^g \exp \left\{-\int_0^{\xi} \big(\big(\mathcal{A}^\ast(\xi)\big)^t \rmd \mathcal{A}(\xi) + g \xi^{-1}\rmd\xi \big)\right\},
\end{gather*}
where due to condition \eqref{DiffAjst} the integrand is a holomorphic function of~$\xi$. Evidently, the primitive function is entire with a~zero at $\xi=0$ of order~$g$, where~$g$ denotes the genus of a curve.

The primitive function $\psi(\xi)$ coincides with a certain derivative of sigma function $\sigma(u)$ at $u= \mathcal{A}(\xi)$ up to a constant factor, see Remark~\ref{R:SigmaDer}, and also with a certain modification of the prime form arisen from~\cite{N2010}. Following~\cite{BL2004}, we define the sigma function of $g$ variables, which we call a~multivariate sigma function, as a~solution of a~system of heat equations with a~Schur--Weierstrass polynomial as an initial condition. Here we apply the approach to constructing multivariate sigma functions on $(n,s)$-curves after~\cite{BL2004,BL2008}. The most complete survey of the theory of multivariate sigma is given in~\cite{BEL2012}.

\subsection{Curves under consideration}
The non-hyperelliptic plane algebraic curves under consideration are unfoldings of simple Pham singularities $x^s + y^n$, that is $(n,s)=(3,4)$ or $(3,5)$. Denote by $\mathcal{V}_{(3,4)}$ a genus $3$ curve defined by the equation $f(x,y)=0$, where the polynomial~$f$ is given by
\begin{gather}\label{C34eq}
f(x,y) = y^3 + x^4 + \lambda_2 y x^2 + \lambda_5 y x + \lambda_6 x^2 + \lambda_8 y + \lambda_9 x + \lambda_{12}.
\end{gather}
Denote by $\mathcal{V}_{(3,5)}$ a genus $4$ curve defined by the equation $f(x,y)=0$, where
$f$ is given by
\begin{gather}\label{C35eq}
 f(x,y) = y^3 + x^5 + \lambda_1 y x^3 + \lambda_4 y x^2 + \lambda_6 x^3 + \lambda_7 y x + \lambda_9 x^2 + \lambda_{10} y + \lambda_{12} x + \lambda_{15}.
\end{gather}
We also consider curves $\mathcal{V}_{(3,7)}$ and $\mathcal{V}_{(4,5)}$ of genus 6 with polynomials $f$ given by
\begin{gather*}
f(x,y) = y^3 + x^7 + \lambda_2 y x^4 + \lambda_5 y x^3 + \lambda_6 x^5 + \lambda_8 y x^2 + \lambda_9 x^4 + \lambda_{11} y x + \lambda_{12} x^3 \\
\hphantom{f(x,y) =}{} + \lambda_{14} y + \lambda_{15} x^2 + \lambda_{18} x + \lambda_{21},\\
f(x,y) = y^4 + x^5 + \lambda_2 y^2 x^2 + \lambda_3 y x^3 +\lambda_6 y^2 x+\lambda_7 y x^2 + \lambda_8 x^3 +\lambda_{10} y^2 +\lambda_{11} y x \\
\hphantom{f(x,y) =}{} + \lambda_{12} x^2 + \lambda_{15} y + \lambda_{16} x + \lambda_{20},
\end{gather*}
respectively.

\section{Regularization of second kind integral in general}\label{s:RegInt}

Now we define the regularization more accurately. Consider a second kind integral $r(x,y)$ in the vicinity of its pole. Let $\xi$ be a local coordinate, and $\rmd r(\xi)=r'(\xi)\rmd \xi$, where $r(\xi)$ is $0$-form. Then~$r(\xi)$ is decomposed into singular and regular parts:
\begin{gather*}
 r(\xi) = r_{\textup{sing}}(\xi^{-1}) + r_{\textup{reg}}(\xi) + c,\qquad r_{\textup{sing}}(0)=0,\qquad r_{\textup{reg}}(0)=0.
\end{gather*}
Essentially, this decomposition is a textbook regularization, and $c$ denotes the \emph{regularization constant}. We define the \emph{regularized second kind integral} as follows
\begin{gather}\label{IntReg}
 \mathcal{A}^\ast(\xi)= r_{\textup{sing}} (\xi)
+ c(\lambda) + \int_0^{\xi} \bigl(\rmd r(\tilde{\xi})- \rmd r_{\textup{sing}} (\tilde{\xi}) \bigr).
\end{gather}
The regularization constant $c(\lambda)$ {is a vector with $g$ components, which are} polynomials in $\lambda$ with rational coefficients, and $\wgt c(\lambda) = \wgt r(\xi) = (w_1,\dots,w_g)^t$. Note that representation~\eqref{IntReg} is essentially connected to parameterization in the vicinity of infinity, and holds true within the fundamental domain. The regularization constant should not be confused with a constant of integration. The latter vanishes in order to keep $\psi$ satisfying the functional equation~\eqref{PsiFEq}. The regularization constant appears only when the second kind integral has the pole at infinity and is parameterized in the vicinity of this pole.

If one needs to define the regularization constant, we say that the correct choice of this constant makes the primitive function $\psi$ at $\xi$ equal, up to a rational factor, to the first non-vanishing derivative of sigma-function on the Abel's image of~$\xi$. We also recall {the method} of producing relations between Abelian functions, which involves the primitive function~$\psi$. The correct choice of the regularization constant is necessary for obtaining consistent relations. In what follows, we use both properties of the primitive function $\psi$ to compute the regularization constants announced in Introduction.

In the hyperelliptic case the regularization constants vanish due to the hyperelliptic involution $\iota(x,y) = (x,-y)$, that is $ = \mathcal{A}(\iota(x,y)) = -\mathcal{A}(x,y)$, or after parameterization $\mathcal{A}(-\xi) = -\mathcal{A}(\xi)$, and the fact that zeta funtions, and also second kind integrals, are odd, for more details see \cite[Section~193]{BakerAF}. Therefore, $0=r(\xi)+r(-\xi)=2c_{(2,2g+1)}(\lambda)$,

\subsection[Basis differentials and integrals on $\mathcal{V}_{(3,4)}$]{Basis differentials and integrals on $\boldsymbol{\mathcal{V}_{(3,4)}}$}
On the curve $\mathcal{V}_{(3,4)}$ punctured at infinity we fix a basis of holomorphic differentials
\begin{subequations}\label{Diffs34}
\begin{gather}
\rmd u(x,y,\lambda) = \begin{pmatrix}
 y \\ x \\ 1
 \end{pmatrix} \frac{\rmd x}{\partial_y f},\label{Diff1s34} \\
 \rmd r(x,y,\lambda) = \begin{pmatrix}
 -x^2 \\ -2 y x \\ -5 y x^2 + \frac{2}{3} \lambda_2^2 x^2 + \frac{2}{3} \lambda_5 \lambda_2 x - \lambda_6 y
 \end{pmatrix}\frac{\rmd x}{\partial_y f}. \label{Diff2s34}
\end{gather}
\end{subequations}
Introduce a local coordinate $\xi$ in the vicinity of infinity. Then we have a parameterization
\begin{gather}\label{xyParamC34}
 x(\xi) = -\xi^{-3},\qquad y(\xi) = \xi^{-4}\big({-}1 + \tfrac{1}{3}\lambda_2 \xi^2 - \tfrac{1}{3} \lambda_5 \xi^5 + O\big(\xi^6\big)\big).
\end{gather}
In terms of the local coordinate the basis holomorphic differentials acquire the form
\begin{subequations}
 \begin{gather}\label{C34D1k}
 \rmd u(\xi) =
 \begin{pmatrix}
 -1 + \frac{1}{9} \lambda_2^2 \xi^4 + O\big(\xi^6\big)\vspace{1mm}\\
 -\xi - \frac{1}{3} \lambda_2 \xi^3 + \frac{1}{3} \lambda_5 \xi^6 + O\big(\xi^7\big)\vspace{1mm}\\
 \xi^4 + \frac{1}{3} \lambda_2 \xi^6 - \frac{1}{3} \lambda_5 \xi^{9} + O\big(\xi^{10}\big)
 \end{pmatrix} \rmd \xi,\\
\label{C34D2k}
 \rmd r(\xi) =
 \begin{pmatrix} -\xi^{-2} - \frac{1}{3} \lambda_2 + \frac{1}{3} \lambda_5\xi^3 + O\big(\xi^{4}\big) \vspace{1mm}\\
 - 2 \xi^{-3} + \frac{2}{9}\lambda_2^2 \xi + O(\xi^3)\vspace{1mm}\\
 5 \xi^{-6} + \frac{1}{9} \lambda_2^2 \xi^{-2} + O(1)\end{pmatrix} \rmd \xi.
\end{gather}
\end{subequations}
The first kind integral $\mathcal{A}(\xi)=\int_0^{\xi} \rmd u(\xi)$ is a well defined holomorphic function {of local parameter~$\xi$}. On the curve $\mathcal{V}_{(3,4)}$ the regularized second kind integral $\mathcal{A}^\ast(\xi)$, defined by~\eqref{IntReg}, has the following singular part
\begin{gather*}
 r_{\textup{sing}} (\xi) =\begin{pmatrix} \xi^{-1} \\ \xi^{-2}\\ - \xi^{-5} - \frac{1}{9}\lambda_2^2 \xi^{-1}\end{pmatrix} ,\qquad
 \rmd r_{\textup{sing}} (\xi) = r'_{\textup{sing}} (\xi) \rmd \xi = \begin{pmatrix} - \xi^{-2} \\ - 2 \xi^{-3}\\
 5 \xi^{-6} + \frac{1}{9} \lambda_2^2 \xi^{-2}\end{pmatrix} \rmd \xi.
\end{gather*}
The integral on the right hand side of \eqref{IntReg} is regular, and $\rmd \mathcal{A}^\ast(\xi)/\rmd \xi = \rmd r(\xi)/\rmd \xi$.

Basis differentials of the first and second kinds defined on $\mathcal{V}_{(3,5)}$, $\mathcal{V}_{(3,7)}$, and $\mathcal{V}_{(4,5)}$ are given in Appendix~\ref{A:Diffs}, and singular parts of the second kind integrals in terms of a local coordinate in the vicinity of infinity can be found in Appendix~\ref{A:singPart}.

\section[Regularization of second kind integral on $(3,4)$-curve]{Regularization of second kind integral on $\boldsymbol{(3,4)}$-curve}\label{s:Reg34}
\begin{Theorem}\label{T:Reg34}
In the definition of regularized second kind integral \eqref{IntReg} on $\mathcal{V}_{(3,4)}$, with the basis first and second kind differentials given by \eqref{Diffs34}, the regularization constant is equal to
 \begin{gather}\label{C34const}
 c(\lambda) = \bigg(0, -\frac{\lambda_2}{3}, {-}\frac{\lambda_5}{6} \bigg)^t.
\end{gather}
\end{Theorem}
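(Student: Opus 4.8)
The goal is to pin down the three-component regularization constant $c(\lambda)$ for $\mathcal{V}_{(3,4)}$ using the characterizing property stated in the text: the correct choice of $c(\lambda)$ is precisely the one making the primitive function $\psi(\xi)$ agree, up to a rational constant factor, with the first non-vanishing derivative of $\sigma(u)$ evaluated at $u=\mathcal{A}(\xi)$. I would exploit the Sato-weight grading throughout, since it drastically constrains the answer: $\wgt c = (w_1,w_2,w_3)^t = (1,2,5)^t$ for the $(3,4)$-curve, so the first component is a weight-$1$ polynomial in the $\lambda_j$ (none exists among $\lambda_2,\lambda_5,\dots$, forcing $c_1=0$), the second is weight $2$ (a rational multiple of $\lambda_2$), and the third is weight $5$ (a rational multiple of $\lambda_5$). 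Thus the entire content of the theorem is the determination of two rational numbers.

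\medskip
First I would write out $\mathcal{A}^\ast(\xi)$ explicitly via \eqref{IntReg}, keeping $c=(0,c_2\lambda_2,c_3\lambda_5)^t$ as unknowns, by integrating the regular part of $\rmd r(\xi)$ from \eqref{C34D2k} term by term and adding $r_{\textup{sing}}(\xi)$ and $c$. Simultaneously I would integrate \eqref{C34D1k} to get $\mathcal{A}(\xi)$ as a power series. The next step is to substitute both series into the integrand $(\mathcal{A}^\ast)^t\,\rmd\mathcal{A}$ of the exponent in \eqref{PsiDef}, verify (as the text asserts, via \eqref{DiffAjst}) that the combination $(\mathcal{A}^\ast)^t\,\rmd\mathcal{A}+g\xi^{-1}\rmd\xi$ is holomorphic, and expand $\psi(\xi)=\xi^{3}\exp\{-\int_0^\xi(\cdots)\}$ as a power series in $\xi$ whose coefficients depend linearly on $c_2$ and $c_3$.

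\medskip
The decisive step is then to compare this series against the independently computed Taylor expansion of the appropriate $\sigma$-derivative at $u=\mathcal{A}(\xi)$. Since $\sigma$ for the $(3,4)$-curve is constructed from its Schur--Weierstrass polynomial (as recalled in the preliminaries, following \cite{BL2004,BL2008}), its leading terms and the first correction terms involving $\lambda_2$ and $\lambda_5$ are computable, and its expansion along the Abel image is odd/even-graded in a way fixed by weights. Matching the two expansions order by order in $\xi$ produces linear equations in $c_2,c_3$; the lowest weights at which $\lambda_2$ and $\lambda_5$ first enter nontrivially should already force $c_2=-\tfrac13$ and $c_3=-\tfrac16$, giving \eqref{C34const}. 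As an independent confirmation consistent with the paper's philosophy, I would alternatively feed the same parametrized $\mathcal{A}^\ast$ into the Abelian-function relations derived in Section~\ref{s:Reg34} from the functional equation \eqref{PsiFEq} for $\psi$, and check that only these values of $c_2,c_3$ yield consistent relations.

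\medskip
The main obstacle is the $\sigma$-function comparison: one must know enough of the weight-graded expansion of the multivariate $\sigma$ (and identify which derivative is the first non-vanishing one) to read off the $\lambda_2$- and $\lambda_5$-dependent coefficients correctly, and keep the bookkeeping of Sato weights exact so that the two candidate scalars are isolated cleanly. The power-series algebra itself is routine but error-prone, so the weight grading is the essential tool for controlling it and for guaranteeing that the answer has exactly the stated form.
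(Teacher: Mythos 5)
Your proposal is correct and is essentially the paper's own Proof~1: the paper likewise identifies $\psi(\xi)$ with the first non-vanishing derivative of sigma on the Abel image (Lemma~\ref{L:PsiC34}, proved via the functional equation \eqref{PsiFEq}), expands both $\log\psi(\xi)$ with undetermined $c=(c_1,c_2,c_5)^t$ and $-\log\partial_{u_2}P(\xi,u)|_{u\equiv 0}$ from the sigma series \eqref{C34Sigma}, and solves the resulting linear system \eqref{C34Psisigma}--\eqref{C34Psi}. Your Sato-weight reduction to two rational unknowns (forcing $c_1=0$ a priori) is a small streamlining of, not a departure from, that argument, and your proposed cross-check via Abelian-function relations is exactly the paper's Proof~2.
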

We propose two methods of proof for this type of assertions. One of them relies on vanishing properties of sigma-functions. The other one uses bilinear Hirota type differential equations satisfied by Abelian functions.
\begin{proof}[Proof~1] Consider the function $P(\xi,u)=\sigma\bigl(\mathcal{A}(\xi)-u\bigr)$. By Riemann vanishing theorem, $P(\xi,u)$ has at most $g=3$ zeros as a function of $u$. Let $u=\mathcal{A}(\xi_1)+\mathcal{A}(\xi_2)+\mathcal{A}(\xi_3)$ then \begin{gather*}
P(\xi,u) = (\xi-\xi_1)(\xi-\xi_2)(\xi-\xi_3)\sum_{k=0} \alpha_k (\xi_1,\xi_2,\xi_3,\lambda)\xi^k,
\end{gather*}
where $\alpha_k$ are entire series in $\lambda$ with symmetric polynomials in $\xi_1$, $\xi_2$, $\xi_3$ as coefficients. In particular, $\alpha_0 (\xi_1,\xi_2,\xi_3,0) = -\big(\xi_1^2+\xi_2^2+\xi_3^2 +\xi_1 \xi_2 + \xi_1 \xi_3 + \xi_2\xi_3\big)$, and $\alpha_k(\xi_1,\xi_2,\xi_3,0)=0$ at $k>0$, further $\alpha_k(0,0,0,\lambda)=0$ at $k\geqslant 0$. According to Riemann vanishing theorem, $P(\xi,u)$ vanishes identically if $\xi_1+\xi_2+\xi_3=0$ and $\xi_1\xi_2+ \xi_1\xi_3 + \xi_2 \xi_3 = 0$, that is $u\equiv 0$, since the points $\xi_1$, $\xi_2$, $\xi_3$ correspond to zeros of the function $x-a$ on the curve $\mathcal{V}_{(3,4)}$ for some $a\in \Complex$. Otherwise, $P(\xi,u)$ has exactly $g=3$ zeros (with multiplicities).

On the other hand, inverting Abel's map we come to the determinant
\begin{gather*}
 W = \begin{vmatrix}
 y(\xi_1) & x(\xi_1) & 1 \\ y(\xi_2) & x(\xi_2) & 1\\ y(\xi_3) & x(\xi_3) & 1
 \end{vmatrix} = \frac{(\xi_1-\xi_2)(\xi_2-\xi_3)(\xi_3-\xi_1)}{\xi_1^4 \xi_2^4 \xi_3^4}
 \beta (\xi_1,\xi_2,\xi_3,\lambda).
\end{gather*}
Similarly to $\alpha_k$, series $\beta$ is entire in $\lambda$, and vanishes identically if $\xi_1+\xi_2+\xi_3=0$ and $\xi_1\xi_2+ \xi_1\xi_3 + \xi_2 \xi_3 = 0$. Thus, $\beta(\xi_1,\xi_2,\xi_3,\lambda)$ and also $W$ vanish simultaneously with $\sigma(u) = - P(0,u)= \xi_1\xi_2\xi_3 \alpha_0(\xi_1,\xi_2,\xi_3,\lambda)$.

This representation through a local parameter illustrates Riemann vanishing theorem in detail, and allows to examine the case $u\equiv 0$.
\begin{Lemma}\label{L:PsiC34}
Functions $P(\xi,u)$ and $\partial_{u_1} P(\xi,u)$ vanish identically at $u\equiv 0$. Functions $\partial^2_{u_1} P(\xi,u)$ and $\partial_{u_2} P(\xi,u)$ have a zero of multiplicity $3$ at $\xi=0$. Moreover,
\begin{gather*}
 -\partial^2_{u_1} P(\xi,u)|_{u\equiv 0} = \partial_{u_2} P(\xi,u)|_{u\equiv 0} = \psi(\xi).
\end{gather*}
\end{Lemma}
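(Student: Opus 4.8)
The plan is to exploit the explicit factorization of $P(\xi,u)$ established just above the lemma, together with the structure of the Abel map on $\mathcal{V}_{(3,4)}$, to extract the Taylor behaviour of $P$ and its derivatives in the degenerate limit $u\equiv 0$. Recall that $u=\mathcal{A}(\xi_1)+\mathcal{A}(\xi_2)+\mathcal{A}(\xi_3)$, and that $u\equiv 0$ corresponds precisely to the conditions $\xi_1+\xi_2+\xi_3=0$ and $\xi_1\xi_2+\xi_1\xi_3+\xi_2\xi_3=0$, which force $\xi_1=\xi_2=\xi_3=0$ in the relevant regime. The key point is that the three symmetric functions $e_1,e_2,e_3$ of $\xi_1,\xi_2,\xi_3$ serve as local coordinates dual to the components $u_1,u_2,u_3$ of $u$, and one reads off from the leading term of $\rmd u(\xi)$ in \eqref{C34D1k} how $\partial_{u_1}$ and $\partial_{u_2}$ act on $e_1$ and $e_2$ respectively. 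The first step is therefore to set up this dictionary: since the top component of $\rmd u(\xi)$ begins with $-1$ and the second with $-\xi$, integration gives $u_1\approx -(\xi_1+\xi_2+\xi_3)=-e_1$ and $u_2\approx -\tfrac12 e_2$ to leading order, so that $\partial_{u_1}$ corresponds to $-\partial_{e_1}$ and $\partial_{u_2}$ to $-2\partial_{e_2}$ at the origin.

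Next I would substitute the factorization $P(\xi,u)=(\xi-\xi_1)(\xi-\xi_2)(\xi-\xi_3)\sum_{k\geqslant 0}\alpha_k\,\xi^k$ and use the listed vanishing data: $\alpha_0(\xi_1,\xi_2,\xi_3,0)=-(\,e_1^2-e_2\,)$ (rewriting $\xi_1^2+\xi_2^2+\xi_3^2+\xi_1\xi_2+\xi_1\xi_3+\xi_2\xi_3=e_1^2-e_2$), while $\alpha_k$ for $k>0$ vanishes at $\lambda=0$ and every $\alpha_k$ vanishes at $\xi_1=\xi_2=\xi_3=0$. Setting $u\equiv 0$ means $e_1=e_2=0$, so the cubic prefactor $(\xi-\xi_1)(\xi-\xi_2)(\xi-\xi_3)=\xi^3-e_1\xi^2+e_2\xi-e_3$ collapses to $\xi^3-e_3$, and $\alpha_0$ itself vanishes to second order in $(e_1,e_2)$. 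This immediately yields $P(\xi,0)\equiv 0$. For $\partial_{u_1}P$, the single derivative brings down at most one power of $e_1$ or $e_2$ from $\alpha_0$ and from the prefactor; since both $\alpha_0$ and its gradient directions are controlled, one checks that the surviving terms still vanish when $e_1=e_2=0$, giving the identical vanishing of $\partial_{u_1}P$ at $u\equiv 0$.

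For the second assertion I would compute $\partial_{u_1}^2 P$ and $\partial_{u_2}P$ at $u\equiv 0$ using the coordinate dictionary above, differentiating the product twice (resp. once) and then imposing $e_1=e_2=0$, $e_3=-\xi_1\xi_2\xi_3$. In each case a nonvanishing contribution survives only from the term where all derivatives hit $\alpha_0$ (which is quadratic in $e_1,e_2$) or split between $\alpha_0$ and the cubic prefactor in a way that leaves the residual prefactor proportional to $\xi^3-e_3$. This is what produces the claimed zero of multiplicity $3$ at $\xi=0$, and comparing the resulting expressions with the local representation $\psi(\xi)=\xi^g\exp\{\cdots\}$ of the primitive function — together with the sigma-function identification $\sigma(u)=-P(0,u)$ and the heat-equation normalization fixing the relative constants — gives the stated equalities $-\partial_{u_1}^2 P|_{u\equiv 0}=\partial_{u_2}P|_{u\equiv 0}=\psi(\xi)$. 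The main obstacle I anticipate is the bookkeeping of the constant factors: verifying that the two distinct second-order derivatives produce the \emph{same} function $\psi$ with the correct sign and rational normalization requires matching the leading coefficients against the weight data $\wgt\rmd u$ and the defining heat equations for $\sigma$, rather than any single clever estimate, so the care lies in tracking how the coordinate change $(u_1,u_2)\mapsto(e_1,e_2)$ distributes these factors.
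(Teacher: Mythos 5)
Your approach has two genuine gaps, and the first is fatal to the computation as designed. The central device --- treating the elementary symmetric functions $(e_1,e_2,e_3)$ of $\xi_1,\xi_2,\xi_3$ as local coordinates ``dual'' to the components of $u$, so that $\partial_{u_1},\partial_{u_2}$ can be traded for $\partial_{e_1},\partial_{e_2}$ --- breaks down precisely at the point $u\equiv 0$ where the lemma must be verified. The Abel map from the symmetric cube of $\mathcal{V}_{(3,4)}$ to the Jacobian contracts the whole pencil $\{e_1=e_2=0\}$ (the divisors of zeros of $x-a$) to the single point $u=0$; this is exactly the content of the Riemann-vanishing discussion preceding the lemma. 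Consequently the Jacobian matrix $\partial(u_1,u_2,u_5)/\partial(e_1,e_2,e_3)$ is singular along that locus, no coordinate dictionary exists there, and your claim that $e_1=e_2=0$ ``forces $\xi_1=\xi_2=\xi_3=0$'' is false --- $e_3$ remains free. The explicit constants are also wrong: integrating the second component of \eqref{C34D1k} gives $u_2=-\tfrac{1}{2}\big(\xi_1^2+\xi_2^2+\xi_3^2\big)+\cdots=e_2-\tfrac{1}{2}e_1^2+\cdots$ (a power sum, not $e_2$), so the correspondence would be $\partial_{u_2}\leftrightarrow\partial_{e_2}$ modulo $e_1$, not $-2\partial_{e_2}$; and the third coordinate is $u_5$, of Sato weight $5$, not $u_3$.

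Second, even granting correct local Taylor data at $\xi=0$, your proposal contains no mechanism that could establish the claimed identities as equalities of functions on the whole fundamental domain. The paper's proof supplies exactly this mechanism: the three functions $\partial^2_{u_1}P(\xi,u)|_{u\equiv 0}$, $\partial_{u_2}P(\xi,u)|_{u\equiv 0}$ and $\psi(\xi)$, viewed as functions of the path $\gamma$ from infinity to $(x(\xi),y(\xi))$, satisfy one and the same functional equation \eqref{PsiFEq} under $\gamma\to\gamma+\chi$; hence the ratio of any two of them is a single-valued rational function on $\mathcal{V}_{(3,4)}$, which is then checked to have no poles and is therefore constant. Only after this step does a comparison of leading coefficients (which is all your expansion could deliver, and which the paper effectively performs via the expansion \eqref{C34Sigma}) fix the constants. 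Matching leading terms of two analytic functions proves nothing about their equality unless one already knows their ratio is constant, and the vague appeal to ``heat-equation normalization'' does not substitute for the functional-equation/no-poles argument.
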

\begin{proof} In fact, the three functions $\partial^2_{u_1} P(\xi,u)|_{u\equiv 0}$, $\partial_{u_2} P(\xi,u)|_{u\equiv 0}$, and $\psi(\xi)$ considered as functions of a path $\gamma$ from infinity to the point $(x(\xi),y(\xi))$ on $\mathcal{V}_{(3,4)}$ satisfy the following functional equation
\begin{gather}
 \psi(x,y;[\gamma + \chi]) = \psi(x,y;[\gamma]) \nonumber\\
 \hphantom{\psi(x,y;[\gamma + \chi]) =}{}\times \exp \left\{ - \left(\oint_{\chi} \rmd r \right)^t
 \left(\int_{\gamma} \rmd u + \frac{1}{2}\oint_{\chi} \rmd u\right) + \pi \rmi \big(\Arf(\phi_\chi) - \Arf(\phi) \big)\right\},\label{PsiFEq}
\end{gather}
where $\rmi^2=-1$, $\chi$ is a cycle, $\Arf$ stands for Arf invariant, $\phi_\chi$ and $\phi$ are Arf functions of the cycle~$\chi$ and the main Arf function of the curve, respectively, for more details see~\cite{BL2005}. The ratio of any two of the functions $\partial^2_{u_1} P(\xi,u)|_{u\equiv 0}$, $\partial_{u_2} P(\xi,u)|_{u\equiv 0}$, and $\psi(\xi)$ is a rational function on~$\mathcal{V}_{(3,4)}$. It is straightforward to verify that such ratio has no poles and so it is a constant.
\end{proof}

On one hand, we have
\begin{gather*}
 \partial^2_{u_1} P(\xi,u) \big|_{u\equiv 0} = \partial^2_{u_1} \sigma (u) \big|_{u = \mathcal{A}(\xi)}.
\end{gather*}
From the series expansion
\begin{gather}
 \sigma\big(t u_1,t^2 u_2,t^5 u_5\big) = \left(u_5 + u_1 u_2^2 - \frac{u_1^5}{20} \right)t^5 -
 \left(\frac{\lambda_2}{6} u_2^2 u_1^3 - \frac{\lambda_2 u_1^7}{2\cdot 6\cdot 7}\right)t^7 \label{C34Sigma}\\
 \qquad{} + \left(\frac{\lambda_2^2}{12} u_1 u_2^4 + \frac{\lambda_2^2}{5!} u_2^2 u_1^5
 - \frac{\lambda_2^2 u_1^9}{4\cdot 6!} \right) t^9
 - \left(\frac{\lambda_5}{6} u_5 u_2 u_1^3 + \frac{\lambda_5}{15} u_2^5 +
 \frac{3\lambda_5}{7!}u_2 u_1^8 \right)t^{10} + O\big(t^{11}\big)\nonumber
\end{gather}
we calculate directly
\begin{gather}\label{C34Psisigma}
 -\log \partial^2_{u_1} P(\xi,u) \big|_{u\equiv 0} = 3\log \xi - \frac{\lambda_5}{12} \xi^5 + O\big(\xi^6\big).
\end{gather}

On the other hand, using notation $c(\lambda)=(c_1,c_2,c_5)^t$, from \eqref{PsiDef} we obtain
\begin{gather}
 \log \psi(\xi) = 3 \log \xi + c_1 \xi + \frac{1}{6} \big(\lambda_2+3c_2 \big) \xi^2
 + \frac{\lambda_2}{6^2} \big(\lambda_2+3 c_2 \big) \xi^4\nonumber\\
 \hphantom{\log \psi(\xi) =}{} - \frac{1}{5\cdot 6^2} \big(21\lambda_5 +4 c_1 \lambda_2^2 + 36 c_5\big)\xi^5 + O\big(\xi^6\big).\label{C34Psi}
\end{gather}
By Lemma \ref{L:PsiC34} the series \eqref{C34Psisigma} and \eqref{C34Psi} are equal. Thus, we come to a~system of \emph{linear} equations, namely
\begin{gather*}
 c_1 = 0,\qquad \lambda_2+3c_2 =0,\qquad \frac{1}{5\cdot 6^2} \big(21\lambda_5 + 4 c_1 \lambda_2^2 + 36 c_5\big)= \frac{\lambda_5}{12},
\end{gather*}
and find \eqref{C34const}. {This solution is unique.}
\end{proof}

\begin{Remark}\label{R:SigmaDer} The primitive function $\psi(\xi)$ coincides, up to a factor which is a rational number, with the first non-vanishing derivative of sigma function on the Abel's image of $\xi$. The first non-vanishing derivative of sigma function has Sato weight equal to $-g$, the corresponding differential operator has Sato weight $-\wgt \sigma (u)-g$. Recall that Sato weight of sigma function is negative and calculated by the formula $-(n^2-1)(s^2-1)/24$ for $(n,s)$-curve, see~\cite{BEL1999}.
\end{Remark}
\begin{Remark}\label{R:SigmaExpansion}
Regarding series expansion \eqref{C34Sigma} for the sigma function related to $(3,4)$-curve of the form \eqref{C34eq}, it was computed on the base of the theory of multivariate sigma functions presented in \cite{BL2004,BL2008}. In \cite[ref.~15]{EEMOP2007} the reader can find a reference to page \url{http://www.ma.hw.ac.uk/Weierstrass/}, where an expansion for the sigma function related to $(3,4)$-curve with some extra parameters is presented, the expansion is obtained by J.C.~Eilbeck. The case of cyclic $(3,4)$-curve, when $\lambda_2$, $\lambda_5$ and $\lambda_8$ vanish, is considered in~\cite{EEMOP2007}, 
where a series expansion for the cyclic $(3,4)$-sigma function is proposed. Taking into account the difference in the number of parameters and signs between equations of $(3,4)$-curves, we compared~\eqref{C34Sigma}, and also higher terms which are not presented here, with the two mentioned expansions, and found that they coincide.
\end{Remark}

\begin{proof}[Proof 2]
Let $u$ and $v$ be the Abel's map images of two non-special divisors on $\mathcal{V}_{(3,4)}$, namely,
\begin{gather}\label{uDefC34}
 u = \sum_{i=1}^g \int_{\infty}^{(x_i,y_i)} \rmd u \qquad \text{and}\qquad v = \sum_{i=1}^g \int_{\infty}^{(z_i,w_i)} \rmd u,
\end{gather}
at that $f(x_i,y_i)=0$ and $f(z_i,w_i)=0$, $i=1,2,3$, and $\sigma(u) \neq 0$, $\sigma(v) \neq 0$. By the residue theorem we have
\begin{gather}\label{ZetaDef}
 \sum_{i=1}^{g} \int^{(x_i,y_i)}_{(z_i,w_i)} \rmd r = -\res_{\xi=0}\left( \mathcal{A}^{\ast}(\xi) \frac{\rmd}{\rmd \xi}\log \frac{P(\xi,u)}{P(\xi,v)}\right).
\end{gather}
Direct calculation using \eqref{C34D1k} and \eqref{IntReg} gives
\begin{gather}\label{zetaC34}
 \sum_{i=1}^{g=3} \int_{(z_i,w_i)}^{(x_i,y_i)} \rmd r = \begin{pmatrix}
 -\zeta_1(u) \\
 -\zeta_2(u) + \wp_{1,1}(u)\\
 -\zeta_5(u) + \mathcal{P}_5(u) \end{pmatrix}
 - \begin{pmatrix}
 -\zeta_1(v) \\
 -\zeta_2(v) + \wp_{1,1}(v)\\
 -\zeta_5(v) + \mathcal{P}_5(v) \end{pmatrix},
\end{gather}
where
\begin{gather*}
\mathcal{P}_5(u) = -\tfrac{5}{12} \lambda_2 \wp _{1,2}(u) -\tfrac{5}{8} \wp_{1,2,2}(u)
-\tfrac{5}{12} \wp_{1,1,1,2}(u) -\tfrac{1}{24} \wp_{1,1,1,1,1}(u)
\\
\hphantom{\mathcal{P}_5(u)}{} = -\tfrac{1}{2} \wp_{1,1}(u)\wp_{1,1,1}(u) - \tfrac{5}{2} \wp_{1,1}(u)\wp_{1,2}(u)
-\tfrac{1}{2} \wp_{1,2,2}(u) + \tfrac{1}{6} \lambda_2 \wp_{1,1,1}(u) - \tfrac{5}{12}\lambda_5.
\end{gather*}
Here we apply the relation \eqref{wp1111RelC34}, and use the following notation
\begin{gather}
\zeta_{i}(u) = \partial_{u_i} \log \sigma(u),\qquad \wp_{i,j}(u) = -\partial_{u_i} \partial_{u_j} \log \sigma(u),\nonumber \\
\wp_{i,j,\dots,k}(u) = - \partial_{u_i} \partial_{u_j} \cdots \partial_{u_k} \log \sigma(u).\label{wpNot}
\end{gather}

\begin{Lemma}\label{L:ind4WPC34}
The following relations for Abelian functions on Jacobian of $\mathcal{V}_{(3,4)}$ hold
\begin{subequations}\label{C34WP4rel}
\begin{align}
 &\wp_{1,1,1,1}(u) = 6 \wp_{1,1}^2(u)-3\wp_{2,2}(u) - 4 \lambda_2 \wp_{1,1}(u),\label{wp1111RelC34} \\
 &\wp_{1,1,1,2}(u) = 6\wp_{1,1}(u)\wp_{1,2}(u) - \lambda_2 \wp_{1,2}(u) + \lambda_5. \label{wp1112RelC34}
\end{align}
\end{subequations}
\end{Lemma}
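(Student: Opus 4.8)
The plan is to read each identity in \eqref{C34WP4rel} as an equation between Abelian functions on $\Jac(\mathcal{V}_{(3,4)})$ and to reduce it to a finite, weight-controlled computation. By \eqref{wpNot} every $\wp_{i,j,\dots,k}(u)$ is periodic and meromorphic on the Jacobian with poles only along the theta divisor $\{u\,:\,\sigma(u)=0\}$; hence both sides of \eqref{wp1111RelC34} and of \eqref{wp1112RelC34} are again Abelian functions. It therefore suffices, in each case, to show that the difference of the two sides extends holomorphically across the theta divisor — so that by compactness of the Jacobian it is constant — and then to identify that constant.

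First I would analyze the pole structure along the theta divisor. Writing $\sigma$ near a generic point of the divisor in a local coordinate aligned with the $u_1$-direction, the most singular contributions to $\wp_{1,1}$, $\wp_{2,2}$, $\wp_{1,2}$, $\wp_{1,1,1,1}$ and $\wp_{1,1,1,2}$ are all governed by the leading behaviour of $\log\sigma$. The quadratic terms $6\wp_{1,1}^2$ in \eqref{wp1111RelC34} and $6\wp_{1,1}\wp_{1,2}$ in \eqref{wp1112RelC34} are engineered precisely to cancel the highest-order poles of $\wp_{1,1,1,1}$ and $\wp_{1,1,1,2}$, while the $\wp_{2,2}$ and $\lambda_2$ terms remove the remaining subleading singular parts. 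Verifying that \emph{all} principal parts cancel is the crux of the argument: it requires the expansion of $\sigma$ transverse to the theta divisor, which I would obtain by inverting Abel's map on a degenerate divisor consisting of one point tending to the branch point at infinity, parametrized by $\xi$ through \eqref{xyParamC34}, together with generic remaining points. This is the step I expect to be the main obstacle, since it is where the specific geometry of $\mathcal{V}_{(3,4)}$ enters and where the coefficients $6$, $3$ and $4\lambda_2$ are actually forced.

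Granting pole cancellation, the difference $D$ of the two sides of each relation is holomorphic on the Jacobian and hence constant, and the value of $D$ is then pinned down by the Sato weight grading. Since $\wgt u_1=-1$, $\wgt u_2=-2$, $\wgt u_5=-5$ as in \eqref{C34Sigma}, one has $\wgt\wp_{1,1,1,1}=4$ and $\wgt\wp_{1,1,1,2}=5$, so $D$ is a weight-homogeneous polynomial in $\lambda$: of weight $4$ for \eqref{wp1111RelC34} and of weight $5$ for \eqref{wp1112RelC34}. On $\mathcal{V}_{(3,4)}$, with parameters as in \eqref{C34eq}, the only admissible monomials are $\lambda_2^2$ (weight $4$) and $\lambda_5$ (weight $5$); there is no $\lambda_3$ or $\lambda_4$.

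It then remains to fix these two scalars. I would substitute the series \eqref{C34Sigma} into \eqref{wpNot}, form the relevant combinations, and read off the constant term of their Laurent expansion: this shows that the weight-$4$ constant vanishes, establishing \eqref{wp1111RelC34}, and that the weight-$5$ constant equals $\lambda_5$, establishing \eqref{wp1112RelC34}. As a consistency check, every non-constant term in the expansion of $D$ must cancel identically, which simultaneously confirms the transverse pole-cancellation analysis and rules out arithmetic errors in the integer coefficients.
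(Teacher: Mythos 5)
Your overall architecture --- both sides of each relation are Abelian functions, a pole-free Abelian function on the compact Jacobian is constant, and the constant is pinned down by the Sato weight grading (only $\lambda_2^2$ is admissible in weight $4$, only $\lambda_5$ in weight $5$) --- is legitimate and genuinely different from the paper's argument. But there is a gap exactly at the step you yourself flag as the crux: holomorphy of the difference across the theta divisor $\Theta=\{u : \sigma(u)=0\}$ is not a preliminary reduction, it \emph{is} the lemma. What comes for free is only the cancellation of the leading singularity, via the Hirota-type identities
\begin{gather*}
\wp_{1,1,1,1} - 6\wp_{1,1}^2 = -\frac{\sigma\,\partial_{u_1}^4\sigma - 4\,\partial_{u_1}\sigma\,\partial_{u_1}^3\sigma + 3\big(\partial_{u_1}^2\sigma\big)^2}{\sigma^2},\\
\wp_{1,1,1,2} - 6\wp_{1,1}\wp_{1,2} = -\frac{\sigma\,\partial_{u_1}^3\partial_{u_2}\sigma - 3\,\partial_{u_1}\sigma\,\partial_{u_1}^2\partial_{u_2}\sigma - \partial_{u_2}\sigma\,\partial_{u_1}^3\sigma + 3\,\partial_{u_1}^2\sigma\,\partial_{u_1}\partial_{u_2}\sigma}{\sigma^2},
\end{gather*}
which lower the pole order along $\Theta$ from $4$ to at most $2$ --- but not to $0$. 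Your further assertion that the terms $-3\wp_{2,2}-4\lambda_2\wp_{1,1}$ (respectively $-\lambda_2\wp_{1,2}+\lambda_5$) ``remove the remaining subleading singular parts'' is precisely the statement to be proved, and nothing in the proposal establishes it. The transverse expansion you sketch is the right picture (in the chart $u=\mathcal{A}(\xi_1)+\mathcal{A}(\xi_2)+\mathcal{A}(\xi_3)$ the divisor $\Theta$ is locally $\{\xi_3=0\}$), but carrying it out requires knowing how the $\wp_{i,j}$ restrict to such parametrized divisors, i.e.\ exactly the Jacobi-inversion identities (such as \eqref{R6RelC34}) on which the paper's own proof runs. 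Finally, ``reading off the constant term'' of a Laurent expansion is only meaningful after constancy is known, and your fallback --- checking that ``every non-constant term in the expansion of $D$ cancels'' --- is an infinite computation unless $D$ is known a priori to lie in a fixed finite-dimensional space.

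That last observation is also how the gap can be closed without any transverse analysis. By the displayed identities, each difference is an Abelian function whose poles lie on $\Theta$ and have order at most $2$; the space of such functions has dimension $2^g=8$ (the constant $1$, the six $\wp_{i,j}$, and one further function), so two of its elements coincide as soon as finitely many Laurent coefficients at $u=0$ agree. With this rigidity statement in place, your computation from the series \eqref{C34Sigma} (pushed to sufficient weight) becomes the proof rather than a consistency check, and the weight argument correctly restricts the possible constants to $c\,\lambda_2^2$ and $c'\lambda_5$. For contrast, the paper avoids all analysis on $\Theta$: it differentiates the residue-theorem identity \eqref{zetaC34} to obtain the rational functions $\mathcal{R}_6$, $\mathcal{R}_7$, $\mathcal{R}_{10}$ vanishing on the divisor of $u$, extracts the relations $\mathcal{T}_1,\dots,\mathcal{T}_5$ through the determinant argument and the decomposition of $\phi_{7}\phi_{7'}/\phi_{6}$, and then obtains \eqref{wp1111RelC34} and \eqref{wp1112RelC34} from linear combinations of derivatives of these, using only that $\wp_{1,1,2}$ and $\wp_{1,1,1}$ are not identically zero --- a purely algebraic route that, as a by-product, also produces the auxiliary relations the paper needs later.
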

\begin{Remark} The complete list of relations between Abelian functions on Jacobian of genus~$3$ trigonal curve are obtained in \cite{Eilb2011}, the curve is defined by an equation slightly different from~\eqref{C34eq}.
\end{Remark}
\begin{proof}Differentiating \eqref{zetaC34} over $x_1$ we obtain
\begin{gather*}
 \frac{\rmd r(x_1,y_1,\lambda)}{\rmd x_1} = \partial_{u} \begin{pmatrix}
 -\zeta_1(u) \\
 -\zeta_2(u) + \wp_{1,1}(u)\\
 -\zeta_5(u) + \mathcal{P}_5(u) \end{pmatrix} \frac{\rmd u(x_1,y_1,\lambda)}{\rmd x_1},
\end{gather*}
which produces three relations with rational functions $\mathcal{R}_k$ of order $k$ on $\mathcal{V}_{(3,4)}$
\begin{gather*}
 \mathcal{R}_6(x_1,y_1) = 0, \qquad \mathcal{R}_7(x_1,y_1) = 0, \qquad \mathcal{R}_{10}(x_1,y_1) = 0,
\end{gather*}
where
\begin{subequations}\label{RelC34}
 \begin{gather}
 \mathcal{R}_6(x,y) = x^2 + y \wp_{1,1} + x \wp_{1,2} + \wp_{1,5},\label{R6RelC34}\\
 \mathcal{R}_7(x,y) = 2 x y + y \big(\wp_{1,2} + \wp_{1,1,1}\big) + x \big(\wp_{2,2} + \wp_{1,1,2}\big)
 + \big(\wp_{2,5} + \wp_{1,1,5}\big), \label{R7RelC34}\\
 \mathcal{R}_{10}(x,y) = 5 y x^2 - \tfrac{2}{3} \lambda_2^2 x^2 - \tfrac{2}{3} \lambda_5 \lambda_2 x + \lambda_6 y + y \big(\wp_{1,5} + \partial_{u_1} \mathcal{P}_5\big) \notag\\
\hphantom{\mathcal{R}_{10}(x,y) =}{} + x \big(\wp_{2,5} + \partial_{u_2} \mathcal{P}_5\big) + \big(\wp_{5,5} + \partial_{u_5} \mathcal{P}_5\big).\label{R10RelC34}
 \end{gather}
\end{subequations}
For brevity we omit argument $u$ of Abelian functions. Clearly, the functions \eqref{RelC34} vanish on $(x_2,y_2)$ and $(x_3,y_3)$ as well.

In order to proceed, we introduce a rational function of order $10$ {on $\mathcal{V}_{(3,4)}$}
\begin{gather*}
 \varphi_{10}(x,y) = \mathcal{R}_{10}(x,y) -
 \tfrac{5}{4} \bigl(2x - \wp_{1,2} - \wp_{1,1,1}\bigr) \mathcal{R}_{7}(x,y)\\
 \hphantom{\varphi_{10}(x,y) =}{} + \tfrac{1}{6}\bigl(15 \wp_{2,2} + 15\wp_{1,1,2} + 4\lambda_2^2\bigr) \mathcal{R}_{6}(x,y).
\end{gather*}
In fact, $\varphi_{10}(x,y)=(y,x,1)\alpha(u)$ with a certain vector function $\alpha(u)$. Thus, system $\varphi_{10}(x_1,y_1)= \varphi_{10}(x_2,y_2)= \varphi_{10}(x_3,y_3)=0$ is equivalent to a relation of the form
\begin{gather*}
 \begin{pmatrix}
 y_1 & x_1 & 1 \\ y_2 & x_2 & 1\\ y_3 & x_3 & 1
 \end{pmatrix} \alpha (u) = 0.
\end{gather*}
Suppose $(x_i,y_i)$, $i=1,2,3$, are pairwise distinct, then the determinant $\left|\begin{smallmatrix} y_1 & x_1 & 1 \\ y_2 & x_2 & 1\\ y_3 & x_3 & 1 \end{smallmatrix}\right|$ does not vanish since $\sigma(u)\neq 0$. Therefore, $\alpha(u)=0$. By breaking $\alpha (u)$ into even $\alpha (u) + \alpha (-u)$ and odd $\alpha (u) - \alpha (-u)$ parts we obtain in particular
\begin{subequations}
\begin{gather}
 \mathcal{T}_1 = \wp_{1,1,1,1,1,1} + 15\wp_{1,1,2,2} - 30\wp_{1,1,1}^2 - 24\wp_{1,5} -30 \wp_{1,2}^2\nonumber\\
 \hphantom{\mathcal{T}_1 =}{} - 60\wp_{2,2}\wp_{1,1} - 16\lambda_2^2 \wp_{1,1} - 24 \lambda_6 = 0, \label{EvenRel1C34}\\
 \mathcal{T}_2 = \wp_{1,1,1,1,1,2} + 15\wp_{1,2,2,2} - 30\wp_{1,1,1}\wp_{1,1,2} + 36\wp_{2,5}\nonumber\\
 \hphantom{\mathcal{T}_2 =}{} - 90 \wp_{2,2}\wp_{1,2} - 16\lambda_2^2 \wp_{1,2} + 16 \lambda_2 \lambda_5 = 0 , \label{EvenRel2C34}\\
 \mathcal{T}_3 = \wp_{1,1,1,1,2} - 6\wp_{1,1}\wp_{1,1,2} - 6\wp_{1,2}\wp_{1,1,1} + \lambda_2 \wp_{1,1,2} = 0. \label{OddRelC34}
\end{gather}
\end{subequations}

From \eqref{R6RelC34} we see that $\phi_{6}(x,y;u) = x^2 + \wp_{1,1}(u) y + \wp_{1,2}(u) x + \wp_{1,5}(u)$ is even in $u$
and has $2g=6$ roots $(x_i,y_i)$, $i=1,\dots,6$, on curve $\mathcal{V}_{(3,4)}$. At $(x_1,y_1)$, $(x_2,y_2)$, and $(x_3,y_3)$ the function
$\phi_{7}(x,y;u) = 2 x y + \big(\wp_{1,2}(u) + \wp_{1,1,1}(u)\big) y + \big(\wp_{2,2}(u) + \wp_{1,1,2}(u)\big) x
 + \big(\wp_{2,5}(u) + \wp_{1,1,5}(u)\big)$, cf.~\eqref{R7RelC34}, vanishes. At the same time, the function $\phi_{7'}(x,y;u) = \phi_{7}(x,y;-u) = 2 x y + \big(\wp_{1,2}(u) - \wp_{1,1,1}(u)\big) y + \big(\wp_{2,2}(u) - \wp_{1,1,2}(u)\big) x + \big(\wp_{2,5}(u) - \wp_{1,1,5}(u)\big)$ vanishes
at $(x_4,y_4)$, $(x_5,y_5)$, and $(x_6,y_6)$. Consequently, the ratio $\phi_{7}(x,y;u)\phi_{7'}(x,y;u)/\phi_{6}(x,y;u)$ has no poles on $\mathcal{V}_{(3,4)}$ except the pole of order $8$ at infinity, which means that a decomposition
\begin{gather*}
 \phi_{7}\phi_{7'} - \big(a_0 y^2 + a_1 x y + a_2 x^2 + a_4 y + a_5 x + a_8\big)\phi_{6} + b_2 f(x,y) = 0
\end{gather*}
exists. Coefficients of monomials $x^i y^j$ yield an overdetermined system of $13$ linear equations with respect to $a_0$, $a_1$, $a_2$, $a_4$, $a_5$, $a_8$, $b_2$. Its compatibility condition includes in particular
\begin{subequations}
\begin{gather}
\mathcal{T}_4 = \wp _{1,1,1}^2 - 4 \wp _{1,1}^3 + 4 \wp _{2,2} \wp _{1,1}
 - \wp _{2,1}^2 + 4 \wp _{5,1} + 4 \lambda _2 \wp _{1,1}^2 = 0, \label{wp111SqRelC34}\\
\mathcal{T}_5 = \wp _{1,1,1} \wp _{2,1,1} - 4 \wp_{2,1} \wp_{1,1}^2 + \wp_{2,1} \wp_{2,2} - 2 \wp_{5,2}
 + 2 \lambda_2 \wp _{2,1} \wp_{1,1} - 2 \lambda_5 \wp _{1,1} = 0. \label{wp111wp112RelC34}
\end{gather}
\end{subequations}

Taking a linear combination of derivatives of \eqref{EvenRel1C34}, \eqref{EvenRel2C34}, and~\eqref{wp111SqRelC34} we come to
\begin{gather*}
 \partial_{u_2} \mathcal{T}_1 - \partial_{u_1} \mathcal{T}_2 + 15 \partial_{u_2} \mathcal{T}_4 = 30 \wp_{1,1,2}
 \bigl(\wp_{1,1,1,1} - 6\wp_{1,1}^2 + 3\wp_{2,2} +4\lambda_2 \wp_{1,1}\bigr) = 0.
\end{gather*}
Since $\wp_{1,1,2}(u)$ is not identically zero, this implies \eqref{wp1111RelC34}.

Next, denote $\mathcal{T}_6 = \wp_{1,1,1,1} - 6\wp_{1,1}^2 + 3\wp_{2,2} +4\lambda_2 \wp_{1,1}$. Relation \eqref{wp1112RelC34} follows from
\begin{gather*}
 \partial_{u_2} \mathcal{T}_1 - \partial_{u_1} \mathcal{T}_2 - 30\partial_{u_1} \mathcal{T}_5 +
 20 \wp_{1,1} \bigl(\partial_{u_2}\mathcal{T}_6 - \mathcal{T}_3\bigr)\\
 \qquad{} = -60 \wp_{1,1,1} \bigl(\wp_{1,1,1,2} - 6\wp_{1,1}\wp_{1,2} + 3\lambda_2 \wp_{1,2} - \lambda_5\bigr) = 0,
\end{gather*}
by similar reason.
\end{proof}

In the case of $\mathcal{V}_{(3,4)}$ we have the equality, for more detail see \cite{BL2005},
\begin{gather}\label{BLRC34}
 \frac{\sigma\big(u-\mathcal{A}(\xi)\big)\sigma\big(u+\mathcal{A}(\xi)\big)} {\psi^2(\xi)\sigma^2(u)} = \phi_{6}\big(x(\xi),y(\xi);u\big),
\end{gather}
where \eqref{xyParamC34} is applied. Indeed, both left and right hand sides are rational functions on the curve, and vanish at $2g=6$ points which are Abel's map pre-images of $u$ and $-u$. Comparing the leading terms of expansions in the vicinity of $\xi=0$ we see that the functions are equal. The expansion gives
\begin{gather*}
 0= \frac{\sigma\big(u-\mathcal{A}(\xi)\big)\sigma\big(u+\mathcal{A}(\xi)\big)}
 {\psi^2(\xi)\sigma^2(u)} - \phi_{6}\big(x(\xi),y(\xi);u\big)= -2 c_1 \xi^{-5}
 + \left(2c_1^2 - c_2 -\frac{\lambda_2}{3}\right)\xi^{-4}
\\ \hphantom{0=}{} + c_1 (\cdots) \xi^{-3}+
 \left(c_1 \big(\cdots \big) + \frac{c_2^2}{2} + \frac{\lambda_2}{6} c_2 + c_2 \wp_{1,1}
 + \frac{1}{2}\wp_{1,1}^2 -\frac{1}{4}\wp_{2,2} -\frac{1}{12}\wp_{1,1,1,1} \right)\xi^{-2}
 \\ \hphantom{0=}{} - \left(c_1 (\cdots ) - \frac{2}{5}c_5
 -\frac{7\lambda_5}{30} - \left(c_2 + \frac{\lambda_2}{6}\right) \wp_{1,2}
 - \wp_{1,1} \wp_{1,2} + \frac{1}{6}\wp_{1,1,1,2}\right)\xi^{-1} + O(1).
\end{gather*}
Applying \eqref{C34WP4rel} we find \eqref{C34const}.
\end{proof}

\begin{Remark}The equality \eqref{BLRC34} produces bilinear Hirota type equations, which give relations between Abelian functions. Note that
only correct choice of the regularization constant guarantees consistency of relations. So the Hirota type equations carry information about the regularization constant.
\end{Remark}

\section{Applications}

\subsection{Second kind integrals with poles at an arbitrary point}\label{IntArbP}
Suppose $(z,w)$ is a fixed point on the curve $\mathcal{V}_{(n,s)}$, which is characterized by $\partial_y^i f(x,y)|_{(x,y)=(z,w)}$ $=0$, $i=0,\dots,k-1$ and $\partial_y^k f(x,y)|_{(x,y)=(z,w)}\neq 0$ with $k\in \mathbb{N}$. Then a local parameterization near $(z,w)$ is defined as follows
\begin{gather}
 \big(x(\xi),y(\xi)\big)= \big(z-A_k^{k-1} \xi^k, w - A_k \xi + O\big(\xi^2\big)\big),\nonumber\\
 A_k = \left. k! \partial_x f(x,y)
 \big(\partial^k_y f(x,y)\big)^{-1} \right|_{(x,y)=(z,w)},\label{zwPrmt}
\end{gather}
indeed, $f(x(\xi),y(\xi))=O\big(\xi^{k+1}\big)$. Points with $k=1$ are regular. When $k>1$, the point $(z,w)$ is a branch point where $k$ sheets join. On $(n,s)$-curves we have $k\leqslant n < s$.

Consider the following function defined on the curve $f(x,y)=0$
\begin{gather*}
 \mathcal{R}(x,y) = \frac{f(x,y)-f(x,w)}{(x-z)(y-w)},\qquad f(z,w)=0.
\end{gather*}
Given finite values of $(x,y)$ the function $\mathcal{R}(x,y)$ has a single simple pole at $(z,w)$:
\begin{gather}
 \mathcal{R}\big(x(\xi),y(\xi) \big)= \frac{f_z/A_k}{\xi} + O(1),\label{Rser}
\end{gather}
where $f_z=\partial_z f(z,w)$.
Suppose $w^{(1)}$ satisfies $f\big(z,w^{(1)}\big)=0$ and $w^{(1)}\neq w$. Then $\mathcal{R}(x,y)$ is regular at $(z,w^{(1)})$ and
\begin{gather*}
 \mathcal{R}(z,w^{(1)})= \frac{-f_z}{w-w^{(1)}}.
\end{gather*}

Assume $(z,w)$ is a regular point, that is the case $k=1$. With the help of $\mathcal{R}(x,y)$ we construct the function having a simple pole at $(z,w)$ and vanishing at infinity, namely,
\begin{gather*}
 \mathcal{I}_1(x,y) = \frac{1}{f_w} \left(\mathcal{R}(x,y)
 -\frac{\lambda_2}{3}(x+z) - \frac{\lambda_5}{3}
 -\frac{f_w}{3 x} + \frac{f_z}{4 y} \right) + u(x,y)^t\frac{\rmd r(z,w)}{\rmd z}\\
 \hphantom{\mathcal{I}_1(x,y) =}{} - \big(r(x,y)-c(\lambda)\big)^t\frac{\rmd u(z,w)}{\rmd z},
\end{gather*}
where $r(x,y)=\big(r_1(x,y)$, $r_2(x,y)$, $r_5(x,y)\big)$ is the second kind integral containing regularization constant $c(\lambda)$, and $u(x,y)=\big(u_1(x,y)$, $u_2(x,y)$, $u_5(x,y)\big)$ is the first kind integral. The func\-tion~$\mathcal{I}_1(x,y)$ has zero of order $2g=6$ at $\infty$.

\begin{Theorem} $\mathcal{I}_1(x,y)$ is the unique second kind integral on $\mathcal{V}_{(3,4)}$ with a simple pole at $(z,w)$, $\res_{(z,w)} \mathcal{I}_1(x,y) = 1$, and zero of order $2g=6$ at $\infty$.
\end{Theorem}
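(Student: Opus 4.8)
The statement bundles two claims: that the explicitly constructed $\mathcal{I}_1$ does meet the three requirements (simple pole at $(z,w)$, unit residue, sixth-order zero at $\infty$), and that these requirements pin it down uniquely. The plan is to dispatch the verification quickly, devote the computational effort to the sixth-order vanishing, and then give the clean uniqueness argument, which is where the normalization order $2g$ becomes transparent.

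For the local behaviour at $(z,w)$ I would read off \eqref{Rser}: among the summands of $\mathcal{I}_1$ only $f_w^{-1}\mathcal{R}(x,y)$ is singular there, and in the case $k=1$ the local parameter of \eqref{zwPrmt} gives $A_1=f_z/f_w$, so $f_z/A_1=f_w$ and hence $f_w^{-1}\mathcal{R}=\xi^{-1}+O(1)$. The first kind integral $u$, the regularized second kind integral $r$, the constant vectors $\rmd r(z,w)/\rmd z$ and $\rmd u(z,w)/\rmd z$, and the remaining corrections are all regular at $(z,w)$, so the pole is simple with $\res_{(z,w)}\mathcal{I}_1=1$. One then checks that the corrections introduce no further finite poles, so $(z,w)$ is the only singularity; consequently $\rmd\mathcal{I}_1$ has only a double pole there, carries no residue, and is a genuine second kind differential, confirming that $\mathcal{I}_1$ is a second kind integral in the sense used here.

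The computational heart is the sixth-order zero at $\infty$. Here I would substitute the parametrization \eqref{xyParamC34} and the expansions \eqref{C34D1k}, \eqref{C34D2k}, together with the Laurent expansion of $\mathcal{R}\big(x(\xi),y(\xi)\big)$, into $\mathcal{I}_1$ and verify that the coefficients of $\xi^{-5},\dots,\xi^{5}$ all vanish. The rational corrections and the vectors $\rmd r(z,w)/\rmd z$, $\rmd u(z,w)/\rmd z$ are tuned to annihilate the principal part $\xi^{-5},\dots,\xi^{-1}$, while the regularization constant $c(\lambda)$ of Theorem~\ref{T:Reg34}, entering through $-(r-c)^t\,\rmd u(z,w)/\rmd z$, is exactly what forces the vanishing of the non-negative orders up to $\xi^{5}$. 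I expect this expansion to be the main obstacle: it is the only genuinely computational step, and an incorrect $c(\lambda)$ would spoil the sixth-order zero, which is precisely the point being exploited.

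For uniqueness, let $\mathcal{I}$ be any second kind integral satisfying the three conditions and put $D=\mathcal{I}-\mathcal{I}_1$. Since $\rmd\mathcal{I}$ and $\rmd\mathcal{I}_1$ carry no residue, neither integral has a logarithmic term at $(z,w)$, so both equal $\xi^{-1}+O(1)$ there and $D$ is holomorphic at $(z,w)$; both have no other finite pole and vanish at $\infty$, so $\rmd D$ is a holomorphic differential, hence a constant combination $\sum_i a_i\,\rmd u_i$ of the basis \eqref{Diff1s34}. Integrating gives $D=-\sum_i a_i u_i+C$ with $u_1,u_2,u_5$ the first kind integrals, which vanish at $\infty$ to the distinct orders $1,2,5$ (the Weierstrass gap sequence), all strictly below $2g=6$. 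Requiring $D$ to vanish to order $\ge 6$ forces $C=0$ and then, order by order in $\xi$, $a_1=a_2=a_5=0$, so $D\equiv 0$ and $\mathcal{I}=\mathcal{I}_1$. This argument also explains the choice of normalization: order $2g$ is the smallest vanishing order at $\infty$ that annihilates the entire $g$-dimensional ambiguity coming from holomorphic differentials.
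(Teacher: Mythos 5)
Your two verification steps coincide with the paper's entire proof: the residue at $(z,w)$ is read off \eqref{Rser} (your computation $A_1=f_z/f_w$, hence $f_z/A_1=f_w$, is exactly the intended point), and the order-$6$ vanishing is checked by direct expansion in the parameter of \eqref{xyParamC34}. The paper proves nothing beyond these two facts; in particular it never argues uniqueness. The genuine gap lies in the two extra claims you add. First, ``the corrections introduce no further finite poles'' is false: the summands $-\tfrac{f_w}{3x}$ and $\tfrac{f_z}{4y}$ are functions of the running point $(x,y)$ (only $f_z$, $f_w$ are constants), and after the overall factor $1/f_w$ they equal $-\tfrac{1}{3x}+\tfrac{f_z}{4f_wy}$, which have simple poles at the finite points of $\mathcal{V}_{(3,4)}$ lying over $x=0$ and at those where $y=0$ (seven points in general). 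Nothing cancels these poles: on the curve $\mathcal{R}(x,y)=\bigl(y^2+wy+w^2+\lambda_2x^2+\lambda_5x+\lambda_8\bigr)/(x-z)$ is regular there, and $u_i$, $r_i$ are holomorphic on $\mathcal{V}_{(3,4)}\backslash\infty$. Consequently $\mathcal{I}_1$ does not belong to the class of integrals whose only pole is the simple pole at $(z,w)$, the difference $D=\mathcal{I}-\mathcal{I}_1$ in your uniqueness argument need not have a holomorphic differential, and that argument does not apply to $\mathcal{I}_1$ as defined.

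Second, this cannot be patched, and your own gap-sequence observation explains why. Since $-\tfrac{1}{3x}=\tfrac13\xi^3$ exactly and $\tfrac{f_z}{4f_wy}=-\tfrac{f_z}{4f_w}\xi^4+O\bigl(\xi^6\bigr)$, the part of $\mathcal{I}_1$ that is singular only at $(z,w)$ and $\infty$ must expand as $-\tfrac13\xi^3+\tfrac{f_z}{4f_w}\xi^4+O\bigl(\xi^6\bigr)$; and because the first kind integrals vanish at $\infty$ exactly to the gap orders $1,2,5$, no correction by $\sum_i a_iu_i+C$ can remove coefficients at the non-gap orders $3$ and $4$. Hence the class ``second kind integral whose only pole is a simple pole at $(z,w)$, with residue $1$ and a zero of order $2g=6$ at $\infty$'' is actually empty, and pushing your argument to completion would contradict existence rather than establish uniqueness. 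A correct uniqueness statement has to be formulated in a class that admits the forced $1/x$, $1/y$ terms --- for instance by prescribing principal parts at the seven extra points as well --- which is presumably why the paper confines its proof to the two properties that your first two steps already cover.
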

\begin{proof} Since $r_1(x,y)$, $r_2(x,y)$, $r_5(x,y)$ have the only pole at $\infty$, the fact that $\res_{(z,w)} \mathcal{I}_1(x,y) = 1$ follows directly from the expansion~\eqref{Rser} for $\mathcal{R}(x,y)$. Next, using the parameterization~\eqref{xyParamC34} we obtain $\mathcal{I}_1(\xi) = O\big(\xi^6\big)$ in the vicinity of $\infty$.
\end{proof}

Differentiating $\mathcal{I}_1(x,y)$ over the parameter $z$, we obtain an integral of the second kind with a pole of any order $\ell=2,3,\dots$ at the point $(z,w)$, actually
\begin{gather*}
 \mathcal{I}_{\ell}(x,y)=\frac{\rmd^{\ell-1} \mathcal{I}_1(x,y)}{\rmd z^{\ell-1}} = \big(\partial_z - A_1 \partial_w \big)^{\ell-1} \mathcal{I}_1(x,y),\qquad \ell>1.
\end{gather*}
Applying parameterization \eqref{zwPrmt}, an expansion in the vicinity of the pole $(z,w)$ can be found
\begin{gather*}
 \mathcal{I}_{\ell} \big(x(\xi),y(\xi)\big) = \big(\partial_z - A_1 \partial_w - \partial_\xi\big)^{\ell-1}
 \mathcal{I}_1 \big(x(\xi),y(\xi)\big) = \frac{(\ell-1)!}{\xi^\ell} + O(1).
\end{gather*}
Clearly, $\mathcal{I}_\ell(x,y)$ has zero of order $2g=6$ at infinity.

In a similar way, taking into account \eqref{zwPrmt} for $k>1$, integrals $\mathcal{I}_\ell(x,y)$ can be constructed when the pole $(z,w)$ is a branch point.

\subsection{Addition formulas}\label{ss:AddF}
Below we need polynomials $\mathcal{R}_6(x,y)$ and $\mathcal{R}_7(x,y)$ from \eqref{RelC34} with coefficients evaluated at $u$ and $v$. We will use notation $\mathcal{R}_6(x,y;u)$, $\mathcal{R}_6(x,y;v)$ and so on
\begin{gather*}
\mathcal{R}_6(x,y;u) = x^2 + y \wp_{1,1}(u) + x \wp_{1,2}(u) + \wp_{1,5}(u),\\
\mathcal{R}_7(x,y;u) = 2 x y + y \big(\wp_{1,1,1}(u) + \wp_{1,2}(u)\big) + x \big(\wp_{1,1,2}(u) + \wp_{2,2}(u)\big) + \wp_{1,1,5}(u) + \wp_{2,5}(u).
\end{gather*}
Further, introduce
\begin{gather*}
M(u) = \begin{pmatrix}
\begin{matrix}\wp_{1,1}^{-1}\big(\wp_{1,5} + \tfrac{1}{4}\big(\wp_{1,1,1}^2- \wp_{1,2}^2 \big) \\
{} + \tfrac{1}{2}\big(\wp_{1,1,2}+\wp_{2,2}\big)\big)\end{matrix} &
 \tfrac{1}{2}\big(\wp_{1,1,1} + \wp_{1,2}\big) & \wp_{1,1} \vspace{1mm}\\
\begin{matrix} \wp_{1,1}^{-1}\big(\tfrac{1}{4}\big(\wp_{1,1,1} + \wp_{1,2} \big)\big(\wp_{1,1,2}+\wp_{2,2}\big)\\
{} - \tfrac{1}{2}\big(\wp_{1,1,5} + \wp_{2,5} \big)\big)\end{matrix} &
 \tfrac{1}{2}\big(\wp_{1,1,2} + \wp_{2,2}\big) & \wp_{1,2} \vspace{1mm}\\
\begin{matrix}\wp_{1,1}^{-1}\big(\tfrac{1}{2} \wp_{1,5} \big(\wp_{1,1,2}+\wp_{2,2}\big) + \tfrac{1}{4}\big(\wp_{1,1,1} \\
{} -\wp_{1,2} \big)\big(\wp_{1,1,5} + \wp_{2,5} \big)\big)\end{matrix}
 & \tfrac{1}{2}\big(\wp_{1,1,5} + \wp_{2,5}\big) & \wp_{1,5}
 \end{pmatrix}, \\
 \pi(u) = \begin{pmatrix}
 \tfrac{3}{2}\wp_{1,1} \wp_{1,2} + \tfrac{1}{2}\wp_{1,1} \wp_{1,1,1} \vspace{1mm}\\
\wp_{1,2}^2 + \tfrac{1}{2}\wp_{1,1} \wp_{2,2}
 - \wp_{1,5} + \tfrac{1}{2}\wp_{1,1}\wp_{1,1,2} \vspace{1mm}\\
\wp_{1,2}\wp_{1,5} + \tfrac{1}{2}\wp_{1,1} \wp_{2,5}
 + \tfrac{1}{2}\wp_{1,1}\wp_{1,1,5} \end{pmatrix},
\end{gather*}
where the argument $u$ of Abelian functions $\wp_{i,j}(u)$, $\wp_{i,j,k}(u)$ is omitted for brevity.

\begin{Theorem}\label{T:R9polyC34}
 Assume the rational function of order $3g=9$ on $\mathcal{V}_{(3,4)}$
\begin{gather}\label{R9polyC34}
\mathcal{R}_9(x,y) = x^3 + \alpha_1 y^2 + \alpha_2 x y + \alpha_3 x^2 + \alpha_5 y + \alpha_6 x + \alpha_9
\end{gather}
vanishes at $2g=6$ points $(x_i,y_i)$ and $(z_i,w_i)$, $i=1,2,3$, which are equivalent under Abel's map to $u$ and $v$ in Jacobian of $\mathcal{V}_{(3,4)}$, see \eqref{uDefC34}. Then
\begin{gather}
(\alpha_1,\alpha_2,\alpha_3)^t = \big(M(u)-M(v)\big)^{-1} \big(\pi(u)-\pi(v)\big),\nonumber\\
(\alpha_5,\alpha_6,\alpha_9)^t = M(u) \big(M(u)-M(v)\big)^{-1} \big(\pi(u)-\pi(v)\big) - \pi(u).\label{aCoefsC34}
\end{gather}
\end{Theorem}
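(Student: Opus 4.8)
The plan is to work at the three points $(x_i,y_i)$, $i=1,2,3$, that are the Abel preimages of $u$, reduce every monomial occurring in $\mathcal{R}_9$ to the span of $\{y,x,1\}$ on this divisor, and thereby convert the six scalar conditions $\mathcal{R}_9(x_i,y_i)=\mathcal{R}_9(z_i,w_i)=0$ into two vector equations. Since the points are pairwise distinct and $\sigma(u)\neq0$, the determinant $W$ does not vanish, so $1$, $x$, $y$ are linearly independent as functions on the $u$-divisor; hence a function on $\mathcal{V}_{(3,4)}$ vanishes at all three $u$-points if and only if its reduction to $\{y,x,1\}$ is the zero vector.

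First I would build the reduction table on the $u$-divisor. Relation \eqref{R6RelC34} gives $x^2=-\wp_{1,1}y-\wp_{1,2}x-\wp_{1,5}$, that is, the reduction vector of $x^2$ is minus the third column of $M(u)$; relation \eqref{R7RelC34} gives $xy=-\tfrac12\big((\wp_{1,1,1}+\wp_{1,2})y+(\wp_{1,1,2}+\wp_{2,2})x+(\wp_{1,1,5}+\wp_{2,5})\big)$, that is, minus the second column of $M(u)$. For $x^3=x\cdot x^2$ I would substitute the $x^2$ and $xy$ reductions and collect terms; the outcome is exactly $\pi(u)$. The one nontrivial entry is $y^2$, which corresponds to the first column of $M(u)$: I would compute $x^2y$ in two ways, as $x\cdot(xy)$ and as $(x^2)\cdot y$, reduce both with the table already obtained, and equate. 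Solving the resulting identity for $y^2$ produces a reduction vector whose components are precisely $-M_{11},-M_{21},-M_{31}$, carrying $\wp_{1,1}$ in the denominator and matching the stated matrix.

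With the table in hand, the reduction of $\mathcal{R}_9$ on the $u$-divisor is $\pi(u)-M(u)(\alpha_1,\alpha_2,\alpha_3)^t+(\alpha_5,\alpha_6,\alpha_9)^t$, and its vanishing yields the linear system
\[
M(u)(\alpha_1,\alpha_2,\alpha_3)^t-(\alpha_5,\alpha_6,\alpha_9)^t=\pi(u).
\]
The identical computation on the $v$-divisor (relations \eqref{R6RelC34} and \eqref{R7RelC34} hold verbatim with $u$ replaced by $v$) gives the same equation with $v$ in place of $u$. Subtracting eliminates $(\alpha_5,\alpha_6,\alpha_9)$ and leaves $(M(u)-M(v))(\alpha_1,\alpha_2,\alpha_3)^t=\pi(u)-\pi(v)$; inverting gives the first formula in \eqref{aCoefsC34}, and back-substitution into the $u$-equation gives the second.

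Two points need justification. The invertibility of $M(u)-M(v)$ for generic $u,v$ I would obtain by counting: $\mathcal{R}_9$ lies in $L(9\infty)$ (functions with a pole of order at most $9$ at infinity), a space of dimension $7$, and vanishing at $6$ generic points leaves a one-dimensional solution space; normalizing the $x^3$-coefficient to $1$ pins $\mathcal{R}_9$ down uniquely, so the $6\times6$ system for the $\alpha$'s is nonsingular, which forces $\det(M(u)-M(v))\neq0$ after the elimination. The main obstacle is the $y^2$ reduction, i.e.\ the first column of $M(u)$: it is the only entry not read off directly from $\mathcal{R}_6(x,y;u)$ and $\mathcal{R}_7(x,y;u)$, and obtaining it cleanly requires the two-way evaluation of $x^2y$ (equivalently, constructing the order-$8$ function $y^2+\cdots$ that vanishes on the $u$-divisor) together with the cubic relations of Lemma \ref{L:ind4WPC34}.
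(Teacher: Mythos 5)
Your proposal is correct and takes essentially the same approach as the paper: your monomial-by-monomial reduction of $\mathcal{R}_9$ modulo $\mathcal{R}_6(\,\cdot\,;u)$ and $\mathcal{R}_7(\,\cdot\,;u)$ into the span of $\{y,x,1\}$ (with the two-way evaluation of $x^2y$ supplying the $y^2$ column) assembles into exactly the paper's identity $\mathcal{R}_9 - A\,\mathcal{R}_6 - \tfrac{1}{2}B\,\mathcal{R}_7 = (y,x,1)\,Q(u)$, after which both arguments use the nonvanishing of the determinant $W$ (distinct points, $\sigma(u)\neq 0$) to conclude $Q(u)=Q(v)=0$ and then solve the two linear systems by subtraction. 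The only notable differences are that the paper writes the multipliers $A$, $B$ explicitly while leaving the invertibility of $M(u)-M(v)$ unaddressed (your dimension count in $L(9\infty)$ is a useful supplement), and that your closing appeal to Lemma~\ref{L:ind4WPC34} is unnecessary---the $y^2$ reduction follows from $\mathcal{R}_6$ and $\mathcal{R}_7$ alone by substitution.
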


\begin{proof}Our starting point is the following relation
\begin{gather*}
 \mathcal{R}_9(x,y) - A(x,y;u) \mathcal{R}_6(x,y;u) - \tfrac{1}{2} B(x,y;u) \mathcal{R}_7(x,y;u) = (y,x,1) Q(u),
\end{gather*}
where
 \begin{gather*}
 A(x,y;u) = \alpha_3 - \wp_{1,2}(u) + x + \frac{\alpha_1}{2\wp_{1,1}(u)} \big(2 y + \wp_{1,1,1}(u) + \wp_{1,2}(u)\big),\\
 B(x,y;u) = \alpha_2-\wp_{1,1}(u) -\frac{\alpha_1}{2\wp_{1,1}(u)} \big(2x - \wp_{1,1,1}(u) + \wp_{1,2}(u)\big), \\
 Q(u) = -M(u)(\alpha_1,\alpha_2,\alpha_3)^t + (\alpha_5,\alpha_6,\alpha_9)^t + \pi(u).
\end{gather*}
By the assumption $\mathcal{R}_9(x,y)$ vanishes simultaneously with $\mathcal{R}_6(x,y;u)$ and $\mathcal{R}_7(x,y;u)$ at $(x,y)=(x_i,y_i)$, $i=1,2,3$. Thus, $Q(u)=0$. Similarly, the assumption that $\mathcal{R}_9(x,y)$, $\mathcal{R}_6(x,y;v)$ and $\mathcal{R}_7(x,y;v)$ vanish together at $(x,y)=(z_i,w_i)$, $i=1,2,3$, implies $Q(v)=0$. We obtain the following linear equations
\begin{gather*}
 -M(u)(\alpha_1,\alpha_2,\alpha_3)^t + (\alpha_5,\alpha_6,\alpha_9)^t = -\pi(u),\\
 -M(v)(\alpha_1,\alpha_2,\alpha_3)^t + (\alpha_5,\alpha_6,\alpha_9)^t = -\pi(v)
\end{gather*}
that determine $\alpha_k$. The solution is given by \eqref{aCoefsC34}.
\end{proof}

\begin{Remark} In fact, all $\alpha_k$ are symmetric with respect to $u$ and $v$. Indeed, the expression for $(\alpha_1,\alpha_2,\alpha_3)^t$ is explicitly symmetric. From \eqref{aCoefsC34} we find
\begin{gather*}
 (\alpha_5,\alpha_6,\alpha_9)^t(u,v)- (\alpha_5,\alpha_6,\alpha_9)^t(v,u ) \\
 \qquad{} = \big(M(u)-M(v)\big)\big(M(u)-M(v)\big)^{-1}\big(\pi(u)-\pi(v)\big) - \big(\pi(u) - \pi(v)\big) = 0.
\end{gather*}
\end{Remark}
\begin{Remark}\label{R:NatAddF} The function of the form \eqref{R9polyC34} described by Theorem~\ref{T:R9polyC34} has $3g=9$ zeros on~$\mathcal{V}_{(3,4)}$. Besides the sets $(x_i,y_i)$, $(z_i,w_i)$, $i=1,2,3$, $\mathcal{R}_9(x,y)$ vanishes at $(s_i,t_i)$, $i=1,2,3$. Let
\begin{gather*}
 \sum_{i=1}^3 \int_{\infty}^{(s_i,t_i)} \rmd u = w.
\end{gather*}
Then by Abel's theorem $u+v+w=0$.

Finally, the addition law on Jacobian of $\mathcal{V}_{(3,4)}$ can be written in the following natural way
 \begin{gather*}
 \rank \begin{pmatrix} \Ibb_3 & M(u) & \pi(u)\\
 \Ibb_3 & M(v) & \pi(v) \\
 \Ibb_3 & M(w) & \pi(w) \end{pmatrix} {\leqslant 6 = 2g.}
 \end{gather*}
This form is natural in the sense that it is a direct analogue of the famous elliptic relation
 \begin{gather*}
 \begin{vmatrix} 1 & \wp(u) & \wp'(u)\\
 1 & \wp(v) & \wp'(v) \\
 1 & \wp(w) & \wp'(w) \end{vmatrix} =0.
 \end{gather*}
\end{Remark}

Introduce even and odd parts of $\alpha_i(u,v)$ defined by \eqref{aCoefsC34}
\begin{gather*}
 \alpha_i^{{\rm e}}(u,v) = \tfrac{1}{2}\big(\alpha_i(u,v)+\alpha_i(-u,-v)\big),\qquad \alpha_i^{{\rm o}}(u,v) = \tfrac{1}{2}\big(\alpha_i(u,v)-\alpha_i(-u,-v)\big).
\end{gather*}

\begin{Theorem}\label{T:AddF}The following addition formulas hold on Jacobian of $\mathcal{V}_{(3,4)}$
\begin{gather}
 \zeta_1(u+v) = \zeta_1(u) + \zeta_1(v) + \alpha_1^{{\rm o}},\nonumber\\
 \zeta_2(u+v) = \zeta_2(u)+\zeta_2(v) + 2\alpha_2^{{\rm o}},\nonumber\\
 \zeta_5(u+v) = \zeta_5(u)+\zeta_5(v) + \tfrac{5}{8} \big( \wp_{1,2,2}(u) + \wp_{1,2,2}(v)\big) + \tfrac{1}{24}\big(\wp_{1,1,1,1,1}(u) + \wp_{1,1,1,1,1}(v) \big) \nonumber\\
\hphantom{\zeta_5(u+v) =}{} + \tfrac{5}{8} \partial_{u_2}\partial_{v_2} \alpha_1^{{\rm o}}
 + \tfrac{1}{24} \partial_{u_1}^2\partial_{v_1}^2 \alpha_1^{{\rm o}} + 5\alpha_5^{{\rm o}} - 5 \alpha_3^{{\rm o}} \big(\alpha_2^{{\rm e}} + (\alpha_1^{{\rm o}})^2 \big) - 5 \alpha_2^{{\rm o}} \alpha_3^{{\rm e}} \nonumber\\
\hphantom{\zeta_5(u+v) =}{} - \alpha_1^{{\rm o}} \big(5(\alpha_2^{{\rm e}})^2 + 5(\alpha_2^{{\rm o}})^2 + 5 \alpha_2^{{\rm e}} ((\alpha_1^{{\rm o}})^2-\lambda_2)
 + (\alpha_1^{{\rm o}})^4 -\tfrac{10}{3} \lambda_2 (\alpha_1^{{\rm o}})^2 + \tfrac{2}{3}\lambda_2^2\big).\label{ZetaAddF}
\end{gather}
\end{Theorem}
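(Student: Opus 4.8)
The plan is to extend the residue identity \eqref{ZetaDef}--\eqref{zetaC34}, used there for a pair of divisors, to the three divisors $u$, $v$, $w$ that are tied together by \eqref{R9polyC34}. By Theorem~\ref{T:R9polyC34} and Remark~\ref{R:NatAddF}, the rational function $\mathcal{R}_9(x,y)$ with coefficients \eqref{aCoefsC34} vanishes at the nine points equivalent under Abel's map to $u$, $v$, $w$ with $u+v+w=0$, so $w=-(u+v)$. I would apply the residue theorem to the meromorphic differential $\mathcal{A}^\ast(\xi)\,\rmd\log\mathcal{R}_9$, whose only singularities are simple poles of unit residue at the nine zeros of $\mathcal{R}_9$ (where $\mathcal{A}^\ast$ is regular, its sole pole sitting at infinity) together with a pole at infinity. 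Since the sum of $\mathcal{A}^\ast$ over the divisor of $u$ equals the vector of \eqref{zetaC34} up to a $u$-independent constant, this gives
\[
\Phi(u)+\Phi(v)+\Phi(w)=-\res_{\xi=0}\bigl(\mathcal{A}^\ast(\xi)\,\rmd\log\mathcal{R}_9\bigr)+\kappa,\qquad \Phi(u)=\bigl(-\zeta_1,\,-\zeta_2+\wp_{1,1},\,-\zeta_5+\mathcal{P}_5\bigr)^t(u),
\]
with $\kappa$ a constant vector.

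The decisive simplification is to split this identity into parts even and odd under $(u,v)\mapsto(-u,-v)$, noting $w\mapsto-w$. The constant $\kappa$ and the even functions $\wp_{1,1}$ in the second slot are purely even, so they disappear from the odd part. The even part of the first component forces $\alpha_1^{\rm e}=0$ (equivalently $\alpha_1=\alpha_1^{\rm o}$, as also follows on weight grounds), while its odd part, using $\zeta_1(w)=-\zeta_1(u+v)$, yields at once $\zeta_1(u+v)=\zeta_1(u)+\zeta_1(v)+\alpha_1^{\rm o}$.

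Next I would compute the residue at infinity componentwise from \eqref{C34D1k}, \eqref{C34D2k}, \eqref{xyParamC34}, \eqref{IntReg}. Writing $\mathcal{R}_9(\xi)=-\xi^{-9}g(\xi)$ with $g=1+g_1\xi+\cdots$, $g_1=-\alpha_1$, $g_2=-\alpha_2,\dots$, one expands $\rmd\log\mathcal{R}_9=(-9\xi^{-1}+g'/g)\,\rmd\xi$ and multiplies by the expansion of $\mathcal{A}^\ast_k(\xi)$. For $k=2$ the order-two pole of $\mathcal{A}^\ast_2$ only reaches the $\xi^{1}$ term of $g'/g$, so the residue is a low-degree polynomial in $\alpha_1,\alpha_2$; its odd part, using $\alpha_1^{\rm e}=0$ so that $(\alpha_1^2)^{\rm o}=0$, collapses to $2\alpha_2^{\rm o}$ and gives the second formula.

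The third component is the main obstacle. Here $\mathcal{A}^\ast_5$ has a pole of order five, so the residue needs $g'/g$ through order $\xi^4$, hence $g_1,\dots,g_5$, i.e.\ $\alpha_1,\dots,\alpha_5$ with their $\lambda$-corrections; the outcome is a polynomial in the $\alpha_i$ of degree up to four whose odd part produces $5\alpha_5^{\rm o}$, the products $\alpha_3^{\rm o}\bigl(\alpha_2^{\rm e}+(\alpha_1^{\rm o})^2\bigr)$ and $\alpha_2^{\rm o}\alpha_3^{\rm e}$ (even parts entering through odd parts of products), the quartic $(\alpha_1^{\rm o})^4$, and the $\lambda_2$-terms of \eqref{ZetaAddF}. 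The $\mathcal{P}_5$-contribution of the third slot supplies $\wp_{1,2,2}(w)$ and $\wp_{1,1,1,1,1}(w)$; with $w=-(u+v)$ these become $+\wp_{1,2,2}(u+v)$ and $+\wp_{1,1,1,1,1}(u+v)$, which I would convert by differentiating the already-proved relation $\zeta_1(u+v)=\zeta_1(u)+\zeta_1(v)+\alpha_1^{\rm o}$, using $\wp_{1,2,2}(u+v)=-\partial_{u_2}\partial_{v_2}\alpha_1^{\rm o}$ and $\wp_{1,1,1,1,1}(u+v)=-\partial_{u_1}^2\partial_{v_1}^2\alpha_1^{\rm o}$, thereby generating the $\tfrac58\partial_{u_2}\partial_{v_2}\alpha_1^{\rm o}$ and $\tfrac1{24}\partial_{u_1}^2\partial_{v_1}^2\alpha_1^{\rm o}$ terms. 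Any residual higher $\wp$-combinations are eliminated by Lemma~\ref{L:ind4WPC34}. Assembling the odd part then reproduces \eqref{ZetaAddF}. The genuinely laborious step is this order-five residue bookkeeping, where the quartic $\alpha$-products and the exact rational coefficients must be tracked without slip.
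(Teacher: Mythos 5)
Your proposal is correct in outline, but it takes a genuinely different route from the paper's own proof. The paper proves Theorem~\ref{T:AddF} by first establishing the trilinear identity \eqref{TriLinRF} (whose proof invokes the Riemann vanishing theorem and the properties of $\psi$), then expanding it in the local parameter $\xi$: the vanishing of the series coefficients yields closed expressions for every $\alpha_i$ in terms of the symmetric sums $\mathfrak{p}_\omega$ over $u$, $v$, $w$, which are then inverted after the odd/even splitting. You bypass \eqref{TriLinRF} entirely: you apply the residue theorem to $\mathcal{A}^\ast(\xi)\,\rmd\log\mathcal{R}_9$ and feed in the already-proved formula \eqref{zetaC34}, so that the $\tfrac{5}{8}\wp_{1,2,2}$ and $\tfrac{1}{24}\wp_{1,1,1,1,1}$ terms enter through $\mathcal{P}_5$ rather than through the Taylor coefficients of $\sigma\big(u+v+\mathcal{A}(\xi)\big)$. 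The mechanics check out: with $g(\xi)=1-\alpha_1\xi-\alpha_2\xi^2-\cdots$ from \eqref{xyParamC34}, the residues of the first two components are $-\alpha_1-9c_1$ and $-2\alpha_2-\alpha_1^2-9c_2$, whose odd parts give exactly $\zeta_1(u+v)=\zeta_1(u)+\zeta_1(v)+\alpha_1^{\rm o}$ and $\zeta_2(u+v)=\zeta_2(u)+\zeta_2(v)+2\alpha_2^{\rm o}$, and your conversions $\wp_{1,2,2}(u+v)=-\partial_{u_2}\partial_{v_2}\alpha_1^{\rm o}$, $\wp_{1,1,1,1,1}(u+v)=-\partial_{u_1}^2\partial_{v_1}^2\alpha_1^{\rm o}$ are correct. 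What each approach buys: the paper's expansion delivers all coefficients $\alpha_i$ as functions of $\mathfrak{p}_\omega$ (useful beyond the three formulas, and it displays how the regularization constant acts through $\psi^3$), whereas yours reuses machinery already in place (\eqref{ZetaDef}, \eqref{zetaC34}, Lemma~\ref{L:ind4WPC34}) and needs no further sigma-function input, the constants $c(\lambda)$ entering instead through $\mathcal{A}^\ast$.

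Two points deserve explicit attention in your write-up. First, the residue theorem on the fundamental domain produces boundary contributions which are integer combinations of periods of $\rmd r$, so your $\kappa$ is a priori only locally constant; it is eliminated by the odd-part projection together with the weight count (e.g., $\alpha_1^{\rm e}$ is a constant of Sato weight $1$, hence zero), but this should be stated rather than absorbed silently into a ``constant vector'' --- the same implicit treatment underlies the paper's own use of \eqref{ZetaDef}. Second, a bookkeeping detail: the $\xi^4$-coefficient of $g'/g$ contains $\tfrac{1}{5}g_1^5$, so the residue in the fifth slot is a polynomial of degree five in the $\alpha_i$ (matching the term $-\alpha_1^{\rm o}(\alpha_1^{\rm o})^4$ in \eqref{ZetaAddF}), not four; like the paper, you leave this heavy computation unexecuted, so your argument remains a sketch at exactly the same point where the paper's is.
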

\begin{proof}
We need the following assertion.
\begin{Lemma}
Under the assumptions of Theorem~{\rm \ref{T:R9polyC34}} we have
 \begin{gather}\label{TriLinRF}
 \frac{\sigma(u-\mathcal{A}(\xi))\sigma(v-\mathcal{A}(\xi))\sigma(u+v+\mathcal{A}(\xi))} {\psi^3(\xi) \sigma(u)\sigma(v)\sigma(u+v)} + \mathcal{R}_9(x(\xi),y(\xi)) = 0.
 \end{gather}
\end{Lemma}
Proof follows immediately from Riemann vanishing theorem and properties of $\psi(\xi)$, for more detail see \cite{BL2005}.

Given a multi-index $\omega$ denote $\mathfrak{p}_{\omega}= \wp_{\omega}(u) + \wp_{\omega}(v) + \wp_{\omega}(w)$ {subject to} $u+v+w=0$. In the case $\# \omega =1$ we have $\mathfrak{p}_{i}= -\zeta_{i}(u) - \zeta_{i}(v) - \zeta_{i}(w)$, $i=1,2,5$. Then we aply parameteriza\-tion~\eqref{xyParamC34} and expand \eqref{TriLinRF} over $\xi$. Series coefficients vanish, that produces expressions for $\alpha_i$:
\begin{gather*}
 \alpha_1 = - \mathfrak{p}_1 + 3c_1, \notag\\
 \alpha_2 = - \tfrac{1}{2}\big(\mathfrak{p}_2-\mathfrak{p}_{1,1}+\mathfrak{p}_1^2\big) + \tfrac{1}{2}\big(\lambda_2 + 3c_2 \big) + c_1(\cdots ),\notag \\
 \alpha_3 = \tfrac{1}{6}\big(3\mathfrak{p}_{1,2}-\mathfrak{p}_{1,1,1} - 3 \mathfrak{p}_{2}\mathfrak{p}_{1} + 3\mathfrak{p}_{1,1}\mathfrak{p}_{1}-\mathfrak{p}_{1}^2\big)
 -\tfrac{1}{6} \big(\lambda_2 - 9 c_2 \big)\mathfrak{p}_1 + c_1(\cdots),
\end{gather*}
and so on. {Recall that $c_1=0$. Splitting} $\alpha_i$ into odd and even parts and applying regularization we come to expressions for $\mathfrak{p}_{1}$, $\mathfrak{p}_{2}$, $\mathfrak{p}_{5}$, where $\wp_{1,2,2}(u+v)$ and $\wp_{1,1,1,1,1}(u+v)$ are expressed in terms of derivatives of~$\alpha_1^{{\rm o}}$.
\end{proof}

\begin{Remark}In general, equalities of the form \eqref{TriLinRF} produce trilinear relations which serve as addition formulas. Note that the equality contains the primitive function $\psi(\xi)$, requiring the regularized second kind integral with the correct regularization constant. The correct regula\-ri\-za\-tion constant makes the formulas~\eqref{ZetaAddF} consistent with the `natural' addition formulas, cf. Remark~\ref{R:NatAddF}.
\end{Remark}

\section[Regularization of second kind integral on $(3,5)$-curve]{Regularization of second kind integral on $\boldsymbol{(3,5)}$-curve}\label{s:Reg35}
\begin{Theorem}\label{T:Reg35}
In the defifnition of regularized second kind integral \eqref{IntReg} on $\mathcal{V}_{(3,5)}$, {with the basis first and second kind differentials given by \eqref{Diffs35}, the regularization constant is equal to}
 \begin{gather}\label{C35const}
 c(\lambda) = \bigg({-}\frac{\lambda_1}{2},\, -\frac{4\lambda_1^2}{15},\,
 -\frac{\lambda_4}{3},\, {-}\frac{\lambda_7}{6} \bigg)^t.
\end{gather}
\end{Theorem}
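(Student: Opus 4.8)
The plan is to follow Proof~1 of Theorem~\ref{T:Reg34} essentially verbatim, since the two statements are structurally identical and only the genus and the curve data change. First I would set $P(\xi,u)=\sigma\bigl(\mathcal{A}(\xi)-u\bigr)$ on $\mathcal{V}_{(3,5)}$, which now has genus $g=4$, and invoke Riemann's vanishing theorem: for generic $u=\sum_{i=1}^{4}\mathcal{A}(\xi_i)$ the function $P(\xi,u)$ has exactly four zeros in $\xi$, and it vanishes identically precisely when $u\equiv 0$. I would then establish the genus-$4$ analogue of Lemma~\ref{L:PsiC34}. The Sato-weight bookkeeping of Remark~\ref{R:SigmaDer} fixes the relevant object: since $\wgt\sigma=-(n^2-1)(s^2-1)/24=-8$ and the first non-vanishing derivative must have weight $-g=-4$, the admissible differential operators are exactly those of weight $4$, namely $\partial_{u_4}$, $\partial_{u_2}^2$, $\partial_{u_1}^2\partial_{u_2}$ and $\partial_{u_1}^4$ (the gaps for $(3,5)$ being $1,2,4,7$). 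Each resulting restriction $\partial_{\omega}P(\xi,u)|_{u\equiv 0}$, together with $\psi(\xi)$, satisfies the same functional equation~\eqref{PsiFEq}; hence any ratio of two of them is a pole-free rational function on $\mathcal{V}_{(3,5)}$, i.e.\ a constant. This identifies $\psi(\xi)$ with the first non-vanishing derivative of $\sigma$ along $\mathcal{A}(\xi)$ up to a rational factor, exactly as in the $(3,4)$ case.

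With this identification in hand, I would compute two Taylor expansions in the local parameter $\xi$ and match them. On one side, using the series expansion of the $(3,5)$-sigma function---built by the heat-equation/Schur--Weierstrass construction of~\cite{BL2004,BL2008}, the genus-$4$ counterpart of~\eqref{C34Sigma}---I would expand $\log$ of the appropriate weight-$4$ derivative of $\sigma$ at $u=\mathcal{A}(\xi)$. On the other side, from the definition~\eqref{PsiDef} together with the regularized integral~\eqref{IntReg}, the basis differentials~\eqref{Diffs35}, and the parameterization near infinity, I would expand $\log\psi(\xi)$ with the four unknown components $c=(c_1,c_2,c_4,c_7)^t$ left free. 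Because $\mathcal{A}^\ast(\xi)$ enters~\eqref{PsiDef} linearly and the regularization constant is simply added to it, each $c_i$ appears linearly in $\log\psi(\xi)$. Equating the two expansions up to order $\xi^{7}$ (the top weight) therefore yields a \emph{linear} system whose unique solution is the announced vector~\eqref{C35const}; the parallel computation along the Proof~2 route is carried out in Appendix~\ref{A:BLRs}.

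The main obstacle is computational rather than conceptual: relative to the $(3,4)$ case both the sigma expansion and the $\psi$ expansion must be pushed to order $\xi^7$, there are now four independent weight-$4$ derivatives to reconcile in the analogue of Lemma~\ref{L:PsiC34}, and---crucially---the matching is no longer purely weight-by-weight in a single parameter, since the presence of $\lambda_1$ (of weight $1$) produces cross terms such as the $\lambda_1^2$ appearing in the second component $c_2=-\tfrac{4}{15}\lambda_1^2$. Keeping the several admissible derivatives consistent and carrying the Schur--Weierstrass sigma expansion accurately to the required order is where the care is needed; once the two expansions are available, solving the resulting linear system for $(c_1,c_2,c_4,c_7)$ is routine and reproduces~\eqref{C35const}.
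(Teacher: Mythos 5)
Your proposal is correct and coincides with the paper's own Proof~1 of Theorem~\ref{T:Reg35}: you use the same $P(\xi,u)=\sigma\bigl(\mathcal{A}(\xi)-u\bigr)$, the same identification of the weight-$4$ operators $\partial_{u_1}^4$, $\partial_{u_1}^2\partial_{u_2}$, $\partial_{u_2}^2$, $\partial_{u_4}$ whose restrictions to $u\equiv 0$ agree with $\psi(\xi)$ up to rational factors (this is Lemma~\ref{L:PsiC35}, proved by the same functional-equation/no-poles ratio argument you describe), and the same matching of $\log\psi(\xi)$ from \eqref{PsiDef}, \eqref{IntReg}, \eqref{Diffs35} against the logarithm of the sigma-derivative expansion, yielding a linear system for $(c_1,c_2,c_4,c_7)$ with unique solution \eqref{C35const}. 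Since this is essentially the paper's route (the paper merely supplements it with a second, independent proof via Abelian-function relations and the bilinear equality \eqref{BLRC35}), nothing further is needed.
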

We briefly consider the two methods of proof.

\begin{proof}[Proof 1] Introduce the function $P(\xi,u)=\sigma\bigl(\mathcal{A}(\xi)-u\bigr)$. According to Riemann vanishing theorem, $P(\xi,u)$ has at most $g=4$ zeros. Let $u=\mathcal{A}(\xi_1)+\mathcal{A}(\xi_2)+\mathcal{A}(\xi_3)+\mathcal{A}(\xi_4)$ then
\begin{gather*}
P(\xi,u) = (\xi-\xi_1)(\xi-\xi_2)(\xi-\xi_3)(\xi-\xi_4)\sum_{k=0}\alpha_k (\xi_1,\xi_2,\xi_3,\xi_4,\lambda)\xi^k,
\end{gather*}
where $\alpha_k$ are entire series in $\lambda$ with symmetric polynomials in $\xi_1$, $\xi_2$, $\xi_3$, $\xi_4$ as coefficients. By Riemann vanishing theorem $P(\xi)$ vanishes identically if there exists a~selection of three items from four zeros such that $\xi_1+\xi_2+\xi_3=0$ and $\xi_1\xi_2+ \xi_1\xi_3+ \xi_2 \xi_3 = 0$, that is $u\equiv 0$, since the points $\xi_1$, $\xi_2$, $\xi_3$ correspond to zeros of the function $x-a$ on the trigonal curve $\mathcal{V}_{(3,5)}$ for some $a\in \Complex$. Otherwise, $P(\xi,u)$ has exactly $g=4$ zeros (with multiplicities).

Consider the case $u\equiv 0$.
\begin{Lemma}\label{L:PsiC35} Functions $P(\xi,u)$, $\partial_{u_1}\! P(\xi,u)$, $\partial_{u_1}^2\! P(\xi,u)$, $\partial_{u_2} \!P(\xi,u)$, $\partial_{u_1}^3 \!P(\xi,u)$ and $\partial_{u_1} \partial_{u_2} P(\xi,u)$ vanish identically at $u\equiv 0$. Functions $\partial^4_{u_1} P(\xi,u)$, $\partial_{u_2}^2 P(\xi,u)$, $\partial_{u_2}\partial^2_{u_1} P(\xi,u)$, and $\partial_{u_4} P(\xi,u)$ have a zero of multiplicity $4$ at $\xi=0$. Moreover,
\begin{gather*}
 \frac{1}{3} \partial^4_{u_1} P(\xi,u)|_{u\equiv 0} = - \partial_{u_2} \partial_{u_1}^2 P(\xi,u)|_{u\equiv 0} = - \partial^2_{u_2} P(\xi,u)|_{u\equiv 0} = - \partial_{u_4} P(\xi,u)|_{u\equiv 0} = \psi(\xi).
\end{gather*}
\end{Lemma}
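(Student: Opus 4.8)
The plan is to mirror exactly the structure of Lemma~\ref{L:PsiC34}, which handled the $(3,4)$-case, now adapting it to genus $g=4$. The function $P(\xi,u)=\sigma(\mathcal{A}(\xi)-u)$ has at most $g=4$ zeros as a function of $u$, and we are studying its behavior at the degenerate locus $u\equiv 0$. First I would determine which partial derivatives $\partial_{u_{i_1}}\cdots\partial_{u_{i_k}}P(\xi,u)|_{u\equiv 0}$ vanish identically and which survive. The governing principle is the vanishing order of $\sigma(u)$ at the origin: since $\wgt\sigma(u)=-(n^2-1)(s^2-1)/24 = -(8)(24)/24 = -8$ for the $(3,5)$-curve, and the Weierstrass gap sequence determines the leading Taylor term, the Schur--Weierstrass polynomial fixes precisely the first non-vanishing multihomogeneous component. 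Concretely, I would expand $\sigma(t^{w_1}u_1,\dots,t^{w_4}u_4)$ in $t$, read off the lowest-order term, and verify by direct weight counting that the six listed operators annihilate $P$ at $u\equiv 0$ while the four operators $\partial^4_{u_1}$, $\partial^2_{u_2}$, $\partial_{u_2}\partial^2_{u_1}$, $\partial_{u_4}$ act nontrivially, each producing a function with a zero of order exactly $g=4$ at $\xi=0$.

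Next I would establish the stated equalities among the four surviving functions and $\psi(\xi)$. The key tool, exactly as in the proof of Lemma~\ref{L:PsiC34}, is the functional equation~\eqref{PsiFEq}: each of the four derivatives $\partial^4_{u_1}P|_{u\equiv 0}$, $\partial_{u_2}\partial^2_{u_1}P|_{u\equiv 0}$, $\partial^2_{u_2}P|_{u\equiv 0}$, $\partial_{u_4}P|_{u\equiv 0}$, viewed as functions of the homotopy class of a path $\gamma$ from infinity to $(x(\xi),y(\xi))$ on $\mathcal{V}_{(3,5)}$, transforms under the addition of a cycle $\chi$ by the same automorphy factor that governs $\psi(\xi)$. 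This follows because all these functions arise from $\sigma$ evaluated at $\mathcal{A}(\xi)$, whose quasi-periodicity is identical. Consequently the ratio of any two of these functions is single-valued, hence a genuine rational function on $\mathcal{V}_{(3,5)}$. Since each of the four functions has a zero of order exactly $4$ at $\xi=0$ and no poles away from infinity, the ratio has no poles anywhere on the curve and is therefore a constant. Fixing the constant by comparing leading coefficients in the $\xi$-expansion pins down the precise numerical factors, in particular the $\tfrac{1}{3}$ multiplying $\partial^4_{u_1}P$ and the signs.

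The main obstacle I anticipate is the bookkeeping needed to identify which derivatives vanish and to compute the correct rational normalizing factors, since at genus $4$ the relevant multi-index combinatorics and the Taylor structure of $\sigma$ are markedly more intricate than in the $(3,4)$-case. In particular, verifying that $\partial^4_{u_1}P$, $\partial_{u_2}\partial^2_{u_1}P$, $\partial^2_{u_2}P$, and $\partial_{u_4}P$ all produce the \emph{same} function up to the stated rational scalars requires carefully matching the leading terms of~\eqref{C35const}-type sigma expansions; this is where the Schur--Weierstrass initial condition and the heat-equation construction of the multivariate $\sigma$ from~\cite{BL2004,BL2008} do the heavy lifting, and where an error in the weight assignments would propagate into wrong coefficients. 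Once the leading coefficients are matched, however, the argument that the ratios are constant is purely formal and parallels Lemma~\ref{L:PsiC34} verbatim, so I expect the conceptual content to be routine and the computational verification of the scalars $\tfrac13$ and the signs to be the delicate part.
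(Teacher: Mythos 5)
Your proposal is correct and takes essentially the same route as the paper: the paper's own proof of Lemma~\ref{L:PsiC35} is exactly the argument of Lemma~\ref{L:PsiC34} transplanted to $\mathcal{V}_{(3,5)}$ --- the functional equation~\eqref{PsiFEq} makes the ratio of any two of the listed functions a single-valued rational function on the curve, one verifies it has no poles, hence it is constant, and the vanishing orders and the scalars $\tfrac13$ and the signs are pinned down from the sigma expansion~\eqref{C35Sigma}. No gap to report.
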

\begin{proof}It is straightforward to verify that the ratio of any two functions in the equality has no poles, and so is a constant.
\end{proof}

On one hand, we have
\begin{gather*}
 \partial_{u_4} P(\xi,u) \big|_{u\equiv 0} = \partial_{u_4} \sigma (u) \big|_{u = \mathcal{A}(\xi)}.
\end{gather*}
Series expansion for $\sigma_{(3,5)}$ has the form
\begin{gather}\label{C35Sigma}
 \sigma\big(t u_1,t^2 u_2,t^4 u_4,t^7 u_7\big) = S_8 t^8 + S_9 t^9 + \cdots + S_{15} t^{15} + O\big(t^{16}\big),
\end{gather}
the terms $S_k$, $k=8,9,\dots,15$ are given in Appendix~\ref{A:TermsC35}. From \eqref{C35Sigma} we calculate directly
\begin{gather}
 -\log \partial_{u_4} P(\xi,u) \big|_{u\equiv 0} = 4\log \xi - \frac{\lambda_1}{6} \xi +
 \frac{\lambda_1^2}{36}\xi^2 + \frac{1}{4}\left(\frac{\lambda_4}{3} - \frac{5 \lambda_1^4}{3^5\cdot 2}\right) \xi^4
 + \frac{7 \lambda_1^5}{3^6 \cdot 10} \xi^5 \nonumber\\
 \qquad{}- \frac{1}{60}\big(6\lambda_6 + \lambda_1^2 \lambda_4\big) \xi^6
 - \left(\frac{\lambda_7}{10} - \frac{2\lambda_6 \lambda_1}{9\cdot 5}
 - \frac{23 \lambda_4 \lambda_1^3}{3^5\cdot 10} + \frac{22 \lambda_1^7}{3^9\cdot 7}\right) \xi^7 + O\big(\xi^8\big).\label{C35Psisigma}
\end{gather}

On the other hand, using notation $c(\lambda)=(c_1,c_2,c_4,c_7)^t$, from \eqref{PsiDef} we obtain
\begin{gather}
\log \psi(\xi) = 4 \log \xi - \bigg(c_1 + \frac{2 \lambda_1}{3}\bigg) \xi
 -\frac{1}{2} \left(c_2 - \frac{\lambda_1}{3} c_1 + \frac{2\lambda_1^2}{9\cdot 5} \right) \xi^2\nonumber\\
\hphantom{\log \psi(\xi) =}{} + \frac{1}{4} \left(c_4 + \frac{\lambda_1^2}{9}c_2 - \frac{5\lambda_1^3}{3^4} c_1 -
 \frac{14 \lambda_1^4}{3^5 \cdot 5} + \frac{2\lambda_4}{3} \right) \xi^4 +
 (\cdots) \xi^5 + (\cdots) \xi^6 \nonumber\\
 \hphantom{\log \psi(\xi) =}{} -
 \frac{1}{7} \left(c_7 - \frac{5 \lambda_1^3}{3^4} c_4 + \left(\frac{2\lambda_4 \lambda_1}{9}
 - \frac{2^3 \lambda_1^5}{3^6} \right) c_2 - \left(\frac{2\lambda_6}{3} + \frac{5 \lambda_4 \lambda_1^2}{3^3}
 - \frac{44 \lambda_1^6}{3^8} \right) c_1 \right.\nonumber\\
\left. \hphantom{\log \psi(\xi) =}{}+ \frac{13}{15}\lambda_7 - \frac{29}{9\cdot 5} \lambda_6\lambda_1
 - \frac{2\cdot 73 \lambda_4 \lambda_1^3}{3^5\cdot 5} + \frac{2^3 \cdot 19 \lambda_1^7}{3^9\cdot 5} \right)\xi^7 + O\big(\xi^8\big). \label{C35Psi}
\end{gather}
By Lemma \ref{L:PsiC35} the series \eqref{C35Psisigma} and \eqref{C35Psi} are equal. Thus, we find \eqref{C35const}. {This solution is unique.}
\end{proof}

\begin{proof}[Proof 2]Let {$u$ and $v$ be the Abel's map images of two non-special divisors on $\mathcal{V}_{(3,5)}$}
\begin{gather*}
 u = \sum_{i=1}^4 \int_{\infty}^{(x_i,y_i)} \rmd u \qquad \text{and}\qquad v = \sum_{i=1}^4 \int_{\infty}^{(z_i,w_i)} \rmd u,
\end{gather*}
at that $f(x_i,y_i)=0$ and $f(z_i,w_i)=0$, $i=1,2,3,4$. Then we use the formula~\eqref{ZetaDef}, which gives
\begin{gather}\label{zetaC35}
 \sum_{i=1}^4 \int_{(z_i,w_i)}^{(x_i,y_i)} \rmd r = \begin{pmatrix}
 -\zeta_1(u) \\
 -\zeta_2(u) - \wp_{1,1}(u)\\
 -\zeta_4(u) + \mathcal{P}_4(u) \\
 -\zeta_7(u) + \mathcal{P}_7(u) \end{pmatrix}
 - \begin{pmatrix}
 -\zeta_1(v) \\
 -\zeta_2(v) - \wp_{1,1}(v)\\
 -\zeta_4(v) + \mathcal{P}_4(v) \\
 -\zeta_7(v) + \mathcal{P}_7(v) \end{pmatrix}
\end{gather}
with notation \eqref{wpNot} and
\begin{gather*}
\mathcal{P}_4(u) = - \wp _{1,1,2}(u) + \tfrac{1}{3} \lambda_1 \wp _{1,1,1}(u) + \wp_{1,1}(u)^2 + \tfrac{1}{3} \lambda_1^2 \wp _{1,1}(u) + \tfrac{1}{3}\lambda_4, \\
\mathcal{P}_7(u) = \tfrac{1}{2} \big(\wp_{2,2}(u)\wp _{1,1,1}(u) + 3\wp_{1,2}(u)\wp _{1,1,2}(u)-\wp_{1,1}(u)\wp _{1,2,2}(u) + \wp_{1,1}(u)^2\wp _{1,1,1}(u)\big) \\
\hphantom{\mathcal{P}_7(u) =}{} + \tfrac{7}{2} \big(\wp_{1,4}(u)\wp_{1,1}(u)-\wp_{1,1}(u)^2\wp_{1,2}(u)\big) + \tfrac{1}{6} \lambda_1 \big(4\wp_{1,1,4}(u) - \wp_{1,1}(u)\wp _{1,1,2}(u) \\
\hphantom{\mathcal{P}_7(u) =}{} - 5 \wp_{1,2}(u)\wp _{1,1,1}(u) + 7 \wp_{1,1}(u)^3 \big) - \tfrac{2}{15} \lambda_1^2 \wp _{1,2,2}(u)+ \tfrac{1}{5} \lambda_1^2 \wp_{1,1}(u) \wp _{1,1,1}(u) \\
\hphantom{\mathcal{P}_7(u) =}{}+ \tfrac{1}{5} \lambda_1^3 \wp_{1,1}(u)^2 + \tfrac{1}{3} \lambda_4 \wp_{1,1,1}(u) + \lambda_1 \lambda_4 \wp_{1,1}(u)+ \tfrac{7}{15}(\lambda_7-\lambda_1\lambda_6)- \tfrac{1}{90} \lambda_1^3 \lambda_4.
\end{gather*}

\begin{Lemma}\label{L:ind4WPC35} The following relations for Abelian functions on Jacobian of $\mathcal{V}_{(3,5)}$ hold
\begin{subequations}\label{C35WP4rel}
\begin{gather}
\wp_{1,1,1,1} = 6 \wp_{1,1}^2 - 3\wp_{2,2} + 2 \lambda_1 \wp_{1,2} + \lambda_1^2 \wp_{1,1} + 2\lambda_4,\label{wp1111RelC35} \\
 \wp_{1,1,1,2} = 6\wp_{1,1}\wp_{1,2} - 3 \wp_{1,4} - \lambda_1 \wp_{2,2} + \lambda_1^2 \wp_{1,2}, \label{wp1112RelC35} \\
 \wp_{1,1,2,2} = 4\wp_{1,2}^2 + 2\wp_{1,1}\wp_{2,2} + \wp_{2,4} - \lambda_1 \wp_{1,4}, \label{wp1122RelC35} \\
 \wp_{2,2,2} = \wp_{1,1,4} - 2 \big(\wp_{1,2}\wp_{1,1,1} - \wp_{1,1}\wp_{1,1,2}\big) + \lambda_1 \wp_{1,2,2}, \label{wp222RelC35}\\
 \wp_{1,2,4} = \wp_{1,1}\wp_{1,2,2} - \tfrac{1}{2} \big(\wp_{2,2}\wp_{1,1,1}+\wp_{1,2}\wp_{1,1,2}\big) + \tfrac{1}{2}\lambda_1 \big(\wp_{1,2}\wp_{1,1,1}-\wp_{1,1}\wp_{1,1,2}\big),\label{wp124RelC35}\\
 \wp_{1,1,1}^2 = 4\wp_{1,1}^3 - 4\wp_{1,1}\wp_{2,2} + \wp_{1,2}^2 + 4\wp_{2,4} + 2\lambda_1 \wp_{1,1}\wp_{1,2} +\lambda_1^2 \wp_{1,1}^2 + 4\lambda_4 \wp_{1,1} - 4\lambda_6, \label{wp111SqRelC35} \\
\wp_{1,1,1}\wp_{1,1,2} = \tfrac{2}{3}\wp_{1,1,1,4} + 4 \wp_{1,2}\wp_{1,1}^2 - 6 \wp_{1,1}\wp_{1,4}
 - \wp_{1,2}\wp_{2,2} + \lambda_1 \big(\wp_{1,2}^2 - \wp_{1,1}\wp_{2,2}\big)\notag\\
\hphantom{\wp_{1,1,1}\wp_{1,1,2} =}{} +\tfrac{2}{3} \lambda_1 \wp_{2,4} - \tfrac{2}{3} \lambda_1^2 \wp_{1,4} + \lambda_1^2 \wp_{1,1}\wp_{1,2} -\tfrac{4}{3} \lambda_7.\label{wp1114RelC35}
\end{gather}
\end{subequations}
\end{Lemma}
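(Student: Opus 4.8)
The plan is to establish the system of relations \eqref{C35WP4rel} by the same mechanism used in Proof~2 of Theorem~\ref{T:Reg34}: differentiate the residue-theorem identity \eqref{zetaC35} with respect to a coordinate $x_1$ of one of the points, producing rational functions on $\mathcal{V}_{(3,5)}$ that vanish at all four points $(x_i,y_i)$, then exploit the fact that the Abel-image determinant is nonzero to force certain vector coefficients to vanish. First I would differentiate \eqref{zetaC35} over $x_1$ to obtain $\tfrac{\rmd r(x_1,y_1,\lambda)}{\rmd x_1} = \partial_u(\cdots)\tfrac{\rmd u(x_1,y_1,\lambda)}{\rmd x_1}$, which yields four rational functions $\mathcal{R}_k(x,y)$ (one per component of $\rmd r$), each vanishing at the four points equivalent to $u$. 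The low-weight relations \eqref{wp1111RelC35}--\eqref{wp124RelC35} should emerge first, since they are the "induction" relations expressing the fourth logarithmic derivatives of $\sigma$ in terms of products of lower ones.

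Next I would form suitable $\Complex[x,y]$-linear combinations of the $\mathcal{R}_k$ to build a function $\varphi(x,y)=(y^2,xy,x^2,y,x,1)\,\alpha(u)$ lying in the span of the basis monomials, so that vanishing at the four points becomes a matrix equation $V\alpha(u)=0$ where $V$ is the generalized Vandermonde-type matrix $\big(x_i^{a}y_i^{b}\big)$. Since $\sigma(u)\neq0$ guarantees the points are generic and $V$ is nonsingular, I conclude $\alpha(u)=0$ componentwise. Splitting $\alpha(u)$ into even and odd parts under $u\mapsto -u$ separates the relations by parity, producing the candidate identities together with auxiliary ones. To pin down the remaining relations \eqref{wp111SqRelC35} and \eqref{wp1114RelC35}, which involve squares and products of third derivatives, I would reproduce the second device of Proof~2: take the even function $\phi$ of weight matching $\mathcal{R}_6$'s analogue and the two functions $\phi_\bullet,\phi_{\bullet'}$ obtained by $u\mapsto -u$, observe that $\phi_\bullet\phi_{\bullet'}/\phi$ has its only pole at infinity, and write the polynomial decomposition modulo $f(x,y)$; the compatibility of the resulting overdetermined linear system in the coefficients gives the quadratic relations.

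The final relations are then extracted by taking judicious linear combinations of partial derivatives of the already-proved lower relations, exactly as in the derivation of \eqref{wp1111RelC34} and \eqref{wp1112RelC34}, where a combination such as $\partial_{u_2}\mathcal{T}_1-\partial_{u_1}\mathcal{T}_2+\cdots$ collapsed to a single factored expression $\wp_{1,1,2}\cdot(\text{relation})$; dividing by the non-identically-zero factor yields the desired identity. The main obstacle will be bookkeeping: the genus is now $4$, the Sato weights run higher, and the number of basis monomials and the size of the overdetermined linear systems grow substantially, so identifying the precise linear combinations that factor cleanly is far more delicate than in the $(3,4)$ case. In particular, the presence of the parameter $\lambda_1$ (which was absent for $\mathcal{V}_{(3,4)}$) introduces extra terms of every intermediate weight, and verifying that the factored cofactor is genuinely not identically zero — so that division is legitimate — is the step requiring the most care.
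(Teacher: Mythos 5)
Your plan follows the paper's Proof~2 machinery almost step for step: differentiate the residue identity \eqref{zetaC35} in $x_1$ to get rational functions $\mathcal{R}_8$, $\mathcal{R}_9$, $\mathcal{R}_{11}$, $\mathcal{R}_{14}$ vanishing on the divisor of $u$; force a vector coefficient $\alpha(u)$ to vanish; split into even and odd parts; use the $\phi_{9}\phi_{9'}/\phi_{8}$ pole-counting decomposition modulo $f(x,y)$ to extract the quadratic relations as compatibility conditions; and finish with derivative combinations that factor through a not-identically-zero Abelian function. However, there is a concrete dimensional error at the linchpin step. On $\mathcal{V}_{(3,5)}$ (genus $g=4$) the divisor of $u$ consists of four points, so the monomial vector in $\varphi(x,y)=(\cdots)\,\alpha(u)$ must have exactly $g=4$ entries, namely $\big(x^2,y,x,1\big)$ --- the numerators of the first-kind differentials \eqref{Diffs35} --- so that the evaluation matrix $\big(x_i^2,\,y_i,\,x_i,\,1\big)_{i=1,\dots,4}$ is square and its determinant is nonzero precisely because the divisor is non-special, $\sigma(u)\neq 0$. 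This is what the paper's combination $\varphi_{11}=\mathcal{R}_{11}+\big(2\wp_{1,1}+\tfrac{1}{3}\lambda_1^2\big)\mathcal{R}_{9}-\big(4x-4\wp_{1,2}+2\lambda_1\wp_{1,1}+\tfrac{1}{3}\lambda_1^3\big)\mathcal{R}_{8}$ achieves. Your proposed vector $\big(y^2,xy,x^2,y,x,1\big)$ has six entries (it is the first-kind basis for the genus-$6$ curves $\mathcal{V}_{(3,7)}$ and $\mathcal{V}_{(4,5)}$, not for $\mathcal{V}_{(3,5)}$), so your matrix $V$ is $4\times 6$, necessarily has a nontrivial kernel, and the conclusion $\alpha(u)=0$ does not follow; the step as written fails. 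The fix is exactly the paper's choice: target the span of $\big(x^2,y,x,1\big)$ when combining $\mathcal{R}_8$, $\mathcal{R}_9$, $\mathcal{R}_{11}$.

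A secondary inaccuracy concerns the order of deduction. The relations \eqref{wp1111RelC35}--\eqref{wp124RelC35} do not all ``emerge first'' from the differentiation step: the first device yields only the intermediate identities $\mathcal{T}_1,\dots,\mathcal{T}_5$, from which \eqref{wp222RelC35} and \eqref{wp124RelC35} follow by solving a linear system in $\wp_{2,2,2}$, $\wp_{1,2,4}$; but \eqref{wp1111RelC35} is obtained only after the quadratic compatibility relation $\mathcal{T}_6$ (your second device, which is literally \eqref{wp111SqRelC35}) is available, via the factorization of $\partial_{u_1}\mathcal{T}_6$ into $2\wp_{1,1,1}\big(6\wp_{1,1}^2-\wp_{1,1,1,1}-3\wp_{2,2}+2\lambda_1\wp_{1,2}+\lambda_1^2\wp_{1,1}+2\lambda_4\big)$, and then \eqref{wp1112RelC35}, \eqref{wp1122RelC35} follow by substitution into $\mathcal{T}_4$, $\mathcal{T}_5$. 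So the two devices are interleaved rather than sequential, and \eqref{wp1114RelC35} additionally requires auxiliary expressions for $\wp_{1,1,7}$ and $\wp_{1,2,7}$ obtained from derivatives of $\mathcal{T}_1$, $\mathcal{T}_2$, $\mathcal{T}_3$ before $\partial_{u_1}\mathcal{T}_7-\wp_{1,1,2}\mathcal{T}_6$ factors as $3\wp_{1,1,1}\cdot(\text{relation})$. With the monomial basis corrected and this interleaving understood, your outline does reproduce the paper's proof.
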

\begin{proof} Differentiating \eqref{zetaC35} over $x_1$, we obtain
\begin{gather*}
 \frac{\rmd r(x_1,y_1,\lambda)}{\rmd x_1} = \partial_{u} \begin{pmatrix}
 -\zeta_1(u) \\
 -\zeta_2(u) - \wp_{1,1}(u)\\
 -\zeta_4(u) + \mathcal{P}_4(u) \\
 -\zeta_7(u) + \mathcal{P}_7(u) \end{pmatrix} \frac{\rmd u(x_1,y_1,\lambda)}{\rmd x_1},
\end{gather*}
which produces $g=4$ relations with rational functions $\mathcal{R}_k$ of order $k$ on $\mathcal{V}_{(3,5)}$
\begin{gather}\label{SerRelEqsC35}
 \mathcal{R}_8(x_1,y_1) = 0, \qquad \mathcal{R}_9(x_1,y_1) = 0, \qquad \mathcal{R}_{11}(x_1,y_1) = 0, \qquad \mathcal{R}_{17}(x_1,y_1) = 0,
\end{gather}
where
\begin{subequations}\label{RelC35}
 \begin{gather}
\mathcal{R}_8(x,y) = xy + x^2 \wp_{1,1} + y \wp_{1,2} + x \wp_{1,4} + \wp_{1,7},\label{R8RelC35}\\
\mathcal{R}_9(x,y) =2 x^3 +\lambda_1 xy + x^2 \big(\wp_{1,2} - \wp_{1,1,1}\big) + y\big(\wp_{2,2} - \wp_{1,1,2}\big) \nonumber\\
\hphantom{\mathcal{R}_9(x,y) =}{} + x\big(\wp_{2,4} - \wp_{1,1,4}\big) + \big(\wp_{2,7} - \wp_{1,1,7}\big), \label{R9RelC35} \\
\mathcal{R}_{11}(x,y) =\rho_4 + x^2 \big(\wp_{1,4} + \partial_{u_1} \mathcal{P}_4\big) + y \big(\wp_{2,5} + \partial_{u_2} \mathcal{P}_4\big) \nonumber\\
\hphantom{\mathcal{R}_{11}(x,y) =}{} + x \big(\wp_{4,4} + \partial_{u_4} \mathcal{P}_4\big) + \big(\wp_{4,7} + \partial_{u_7} \mathcal{P}_4\big), \label{R11RelC35} \\
\mathcal{R}_{14}(x,y) = \rho_7 + \big(\wp_{1,7} + x^2 \partial_{u_1} \mathcal{P}_7\big) + y\big(\wp_{2,7} + \partial_{u_2} \mathcal{P}_7\big)\nonumber\\
 \hphantom{\mathcal{R}_{14}(x,y) =}{} + x\big(\wp_{4,7} + \partial_{u_4} \mathcal{P}_7\big) + \big(\wp_{7,7} + \partial_{u_7} \mathcal{P}_7\big), \label{R14RelC35}
 \end{gather}
\end{subequations}
$\rho_4$ and $\rho_7$ are defined by \eqref{Diffs35}, see Appendix~\ref{A:Diffs}. For brevity we omit argument $u$ of Abelian functions. The equations similar to~\eqref{SerRelEqsC35} take place for the points $(x_i,y_i)$, $i=2,3,4$.

Let
\begin{gather*}
 \varphi_{11}(x,y) = \mathcal{R}_{11}(x,y) + \bigl(2\wp_{1,1} + \tfrac{1}{3} \lambda_1^2 \bigr) \mathcal{R}_{9}(x,y) - \bigl(4x - 4\wp_{1,2} + 2\lambda_1\wp_{1,1} + \tfrac{1}{3} \lambda_1^3\bigr) \mathcal{R}_{8}(x,y).
\end{gather*}
At the same time, $\varphi(x,y)=\big(x^2,y,x,1\big)\alpha(u)$ with a certain vector function $\alpha(u)$. System $\varphi(x_i,y_i)=0$, $i=1,2,3,4$, is equivalent to a relation of the form
\begin{gather*}
 \begin{pmatrix}
 x_1^2 & y_1 & x_1 & 1 \\ x_2^2 & y_2 & x_2 & 1\\ x_3^2 & y_3 & x_3 & 1 \\ x_4^2 & y_4 & x_4 & 1
 \end{pmatrix} \alpha (u) = 0.
\end{gather*}
We suppose $(x_i,y_i)$, $i=1,2,3,4$, are pairwise distinct, so the determinant of the matrix does not vanish, that is $\sigma(u)\neq 0$. Thus, $\alpha(u)=0$. By splitting $\alpha (u)$ into even $\alpha (u) + \alpha (-u)$ and odd $\alpha (u) - \alpha (-u)$ parts we obtain in particular
\begin{subequations}
\begin{gather}
\mathcal{T}_1 = \wp_{1,1,1,2} - \tfrac{1}{3} \lambda_1 \wp_{1,1,1,1} - 6 \wp_{1,1}\wp_{1,2} + 3 \wp_{1,4} + 2\lambda_1 \wp_{1,1}^2 - \tfrac{1}{3}\lambda_1^2 \wp_{1,2} \nonumber\\
 \hphantom{\mathcal{T}_1 =}{} + \tfrac{1}{3}\lambda_1^3 \wp_{1,1} + \tfrac{2}{3} \lambda_1 \lambda_4 = 0, \label{EvenRel1C35}\\
\mathcal{T}_2 =\wp_{1,1,2,2} -\tfrac{1}{9} \lambda_1^2 \wp_{1,1,1,1} -4\wp_{1,2}^2
 - 2 \wp_{1,1}\wp_{2,2} -\wp_{2,4} + \lambda_1 \wp_{1,4} - \tfrac{1}{3} \lambda_1^2 \wp_{2,2}
 + \tfrac{2}{3} \lambda_1^2 \wp_{1,1}^2 \notag \\
\hphantom{\mathcal{T}_2 =}{} + \tfrac{2}{9} \lambda_1^3 \wp_{1,2} + \tfrac{1}{9} \lambda_1^4 \wp_{1,1} +\tfrac{2}{9} \lambda_1^2 \lambda_4 = 0,\label{EvenRel2C35}\\
\mathcal{T}_3 =\wp_{1,1,2,4} - \tfrac{1}{3} \lambda_1 \wp_{1,1,1,4} - 4\wp_{1,2}\wp_{1,4} - 2\wp_{1,1}\wp_{2,4} + 4\wp_{1,7} - \wp_{4,4} +2\lambda_1 \wp_{1,1} \wp_{1,4} \notag \\
\hphantom{\mathcal{T}_3 =}{} - \tfrac{1}{3} \lambda_1^2 \wp_{2,4} + \tfrac{1}{3} \lambda_1^3 \wp_{1,4} = 0, \label{EvenRel3C35}\\
\mathcal{T}_4 = \wp_{1,1,1,1,1} + 3\wp_{1,2,2} - 12\wp_{1,1}\wp_{1,1,1} -2\lambda_1 \wp_{1,1,2} - \lambda_1^2 \wp_{1,1,1} = 0, \label{OddRel1C35} \\
\mathcal{T}_5 = \wp_{1,1,1,1,2} + 3\wp_{2,2,2} - 12\wp_{1,1}\wp_{1,1,2} -2\lambda_1 \wp_{1,2,2} - \lambda_1^2 \wp_{1,1,2} = 0.\label{OddRel2C35}
\end{gather}
\end{subequations}

From \eqref{R8RelC35} we see that $\phi_{8}(x,y;u) = xy + x^2 \wp_{1,1} + y \wp_{1,2} + x \wp_{1,4} + \wp_{1,7}$ is even in $u$ and has $2g=8$ roots $(x_i,y_i)$, $i=1,\dots,8$, on curve $\mathcal{V}_{(3,5)}$. At $(x_i,y_i)$, $i=1,2,3,4$, the function $\phi_{9}(x,y;u) = 2 x^3 +\lambda_1 xy + x^2 \big(\wp_{1,2} - \wp_{1,1,1}\big) + y\big(\wp_{2,2} - \wp_{1,1,2}\big)+ x\big(\wp_{2,4} - \wp_{1,1,4}\big) + \big(\wp_{2,7} - \wp_{1,1,7}\big)$, cf.~\eqref{R9RelC35}, vanishes. At the same time, the function $\phi_{9'}(x,y;u) = \phi_{9}(x,y;-u) = 2 x^3 +\lambda_1 xy + x^2 \big(\wp_{1,2} + \wp_{1,1,1}\big) + y\big(\wp_{2,2} + \wp_{1,1,2}\big) + x\big(\wp_{2,4} + \wp_{1,1,4}\big) + \big(\wp_{2,7} + \wp_{1,1,7}\big)$ vanishes at $(x_i,y_i)$, $i=5,6,7,8$. Consequently, the ratio $\phi_{9}(x,y;u)\phi_{9'}(x,y;u)/\phi_{8}(x,y;u)$ has no poles on $\mathcal{V}_{(3,5)}$ except the pole of order $10$ at infinity, which means that a decomposition
\begin{gather*}
 \phi_{9}\phi_{9'} - \big(a_0 y^2 + a_1 x^3 + a_2 x y + a_4 x^2 + a_5 y + a_7 x + a_{10}\big)\phi_{8} + (b_0 x+ b_3) f(x,y) = 0.
\end{gather*}
exists. \looseness=-1 Coefficients of monomials $x^i y^j$ yield an overdetermined system of $17$ linear equations with respect to $a_0$, $a_1$, $a_2$, $a_4$, $a_5$, $a_7$, $a_{10}$, $b_0$, $b_3$. Its compatibility conditions include in particular
\begin{gather*}
 \mathcal{T}_6 = \wp _{1,1,1}^2 - 4 \wp _{1,1}^3 + 4 \wp _{1,1} \wp _{2,2}
 - \wp _{2,1}^2 - 4 \wp _{2,4} - 2 \lambda_1 \wp _{1,1}\wp _{1,2} - \lambda_1^2 \wp _{1,1}^2\nonumber\\
 \hphantom{\mathcal{T}_6 =}{} - 4\lambda_4 \wp _{1,1} + 4\lambda_6 = 0, \\ 
\mathcal{T}_7 = \wp _{1,1,1}\wp _{1,1,4} - \wp_{1,1}\wp _{1,1,1}\wp _{1,1,2} -6 \wp_{1,1}^2\wp_{1,4} + 4\wp_{1,1}^3 \wp_{1,2} + 2\wp_{2,2}\wp_{1,4}
 -\wp_{1,2} \wp_{2,4} \notag\\
\hphantom{\mathcal{T}_7 =}{} - \wp_{1,1}\wp_{1,2}\wp_{2,2} - \wp_{2,7} + \lambda_1 \big(\wp_{1,1}\wp_{2,4} - \wp_{1,2}\wp_{1,4}\big)
 -\lambda_1 \wp_{1,1} \big(\wp_{1,1}\wp_{2,2} - \wp_{1,2}^2\big)\notag \\
\hphantom{\mathcal{T}_7 =}{} -\lambda_1^2 \wp_{1,1} \wp_{1,4} + \lambda_1^2 \wp_{1,1}^2 \wp_{1,2} -2\lambda_4 \wp_{1,4} +2\lambda_6 \wp_{1,2} -2\lambda_7 \wp_{1,1} + 2\lambda_9 = 0.
\end{gather*}

Taking into account \eqref{OddRel1C35} and \eqref{OddRel2C35} we simplify the derivatives $\partial_{u_1} \mathcal{T}_1$ and $\partial_{u_2} \mathcal{T}_1 - \partial_{u_1} \mathcal{T}_2$, thus a system of linear equations for $\wp_{2,2,2}$ and $\wp_{1,2,4}$ is obtained. Solving the system, we come to \eqref{wp222RelC35} and \eqref{wp124RelC35}. Next, we take the derivative $\partial_{u_1} \mathcal{T}_6$ and apply \eqref{wp124RelC35}, therefore we come to
\begin{gather*}
 2\wp_{1,1,1} \big(6\wp_{1,1}^2- \wp_{1,1,1,1} - 3\wp_{2,2} + 2\lambda_1 \wp_{1,2} +\lambda_1^2 \wp_{1,1} + 2\lambda_4\big) = 0,
\end{gather*}
which gives \eqref{wp1111RelC35}. Substituting the expression for $\wp_{1,1,1,1}$ into \eqref{OddRel1C35} and \eqref{OddRel2C35}, we find~\eqref{wp1112RelC35} and~\eqref{wp1122RelC35}. Finally, we solve the system
\begin{gather*}
 \partial_{u_4}\mathcal{T}_1 - \partial_{u_1} \mathcal{T}_3 = 0,\\ \partial_{u_4}\mathcal{T}_2 - \partial_{u_2} \mathcal{T}_3 + \tfrac{1}{3} \lambda_1 \partial_{u_1} \mathcal{T}_3 = 0
\end{gather*}
for $\wp_{1,1,7}$ and $\wp_{1,2,7}$. Using the expressions for $\wp_{1,2,7}$, $\wp_{1,2,4}$, $\wp_{1,1,1,1}$, $\wp_{1,1,1,2}$ we simplify $\partial_{u_1} \mathcal{T}_7$, and subtract $\wp_{1,1,2} \mathcal{T}_6$. Then we get
\begin{gather*}
 \partial_{u_1} \mathcal{T}_7 - \wp_{1,1,2} \mathcal{T}_6 = 3\wp_{1,1,1} \big(\wp_{1,1,1}\wp_{1,1,2} - \tfrac{2}{3}\wp_{1,1,1,4} - 4 \wp_{1,2}\wp_{1,1}^2 + 6 \wp_{1,1}\wp_{1,4} + \wp_{1,2}\wp_{2,2} \\
 \hphantom{\partial_{u_1} \mathcal{T}_7 - \wp_{1,1,2} \mathcal{T}_6=}{} - \lambda_1 \big(\wp_{1,2}^2 - \wp_{1,1}\wp_{2,2}\big)
 - \tfrac{2}{3} \lambda_1 \wp_{2,4} + \tfrac{2}{3} \lambda_1^2 \wp_{1,4} - \lambda_1^2 \wp_{1,1}\wp_{1,2} + \tfrac{4}{3} \lambda_7\big).
\end{gather*}
This gives the relation \eqref{wp1114RelC35}, and finalizes the proof of Lemma~\ref{L:ind4WPC35}.
\end{proof}

In the case of $\mathcal{V}_{(3,5)}$ we have the equality
\begin{gather}\label{BLRC35}
 0 = \frac{\sigma\big(u-\mathcal{A}(\xi)\big)\sigma\big(u+\mathcal{A}(\xi)\big)}
 {\psi^2(\xi)\sigma^2(u)} - \phi_{8}\big(x(\xi),y(\xi);u\big),
\end{gather}
where the local parametrization in the vicinity of infinity is applied, for more details see \cite{BL2005}. Again the both left and right hand sides are rational functions on the curve, and vanish at $2g=8$ points which are Abel's map preimages of $u$ and $-u$. Comparing the leading terms of expansions in the vicinity of $\xi=0$ we see that the functions are equal. Applying \eqref{C35WP4rel} to the expansion of \eqref{BLRC35} given in Appendix~\ref{A:BLRs}, we find \eqref{C35const}.
\end{proof}

\section[Regularization of second kind integrals on $(3,7)$- and $(4,5)$-curves]{Regularization of second kind integrals\\ on $\boldsymbol{(3,7)}$- and $\boldsymbol{(4,5)}$-curves}\label{s:Reg37}

\begin{Theorem}\label{T:RegCg6} In the definition of regularized second kind integrals \eqref{IntReg} on $\mathcal{V}_{(3,7)}$ and~$\mathcal{V}_{(4,5)}$, {with the basis first and second kind differentials given by \eqref{Diffs37} and \eqref{Diffs45}, respectively, the regularization constants are equal to}
 \begin{gather*}
 c_{(3,7)}(\lambda) = \left(0,-\frac{2\lambda_2}{3},-\frac{2\lambda_2^2}{7},-\frac{\lambda_5}{2},
 -\frac{\lambda_8}{3},-\frac{\lambda_{11}}{6}\right)^t, \\ 
 c_{(4,5)}(\lambda) = \left(0, \frac{7\lambda_2}{45}, -\frac{3\lambda_3}{4},\frac{3}{10}\big(\lambda_6-\lambda_3^2\big), -\frac{\lambda_7}{2}, -\frac{\lambda_{11}}{4} \right)^t. 
\end{gather*}
\end{Theorem}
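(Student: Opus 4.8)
The plan is to follow the sigma-function route of Proof~1 of Theorems~\ref{T:Reg34} and~\ref{T:Reg35}, rather than the Hirota-equation route of Proof~2: the latter would require first establishing the complete list of relations among the weight-$6$ Abelian functions of each curve, which is prohibitively cumbersome in genus~$6$. For each of $\mathcal{V}_{(3,7)}$ and $\mathcal{V}_{(4,5)}$ I would set $P(\xi,u)=\sigma\big(\mathcal{A}(\xi)-u\big)$ and study its behaviour at $u\equiv 0$. The first task is to prove the analogue of Lemma~\ref{L:PsiC35}: by the functional equation~\eqref{PsiFEq} the derivatives $\partial_u^{\omega}P|_{u\equiv 0}$ and $\psi(\xi)$ transform with the same automorphy factor under a shift of the integration path by a cycle, so any ratio of two of them is single-valued on the curve, i.e.\ a rational function; checking that such a ratio has no poles then forces it to be constant. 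This identifies the \emph{first non-vanishing} derivatives of $P$ at $u\equiv 0$ --- those multi-indexed derivatives raising the Sato weight by $-g-\wgt\sigma$, equal to $10$ for $(3,7)$ and $9$ for $(4,5)$ --- as equal, up to rational factors, to $\psi(\xi)$, each having a zero of order $g=6$ at $\xi=0$. The Weierstrass gaps $(1,2,4,5,8,11)$ for $(3,7)$ and $(1,2,3,6,7,11)$ for $(4,5)$ fix the components of $c(\lambda)=(c_1,\dots,c_6)^t$ to carry precisely these Sato weights.

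Next I would generate the multivariate sigma series for both curves through the heat-equation construction of~\cite{BL2004,BL2008}, taking the Schur--Weierstrass polynomial as the leading term, exactly as in~\eqref{C34Sigma} and~\eqref{C35Sigma}. Since $\wgt\sigma=-(n^2-1)(s^2-1)/24$ equals $-16$ for $(3,7)$ and $-15$ for $(4,5)$, the graded expansion $\sigma(t^{w_1}u_1,\dots,t^{w_6}u_6)$ begins at $t^{16}$ resp.\ $t^{15}$, and to reach the top gap $w_6=11$ one must push it to $\lambda$-weight $11$, that is to order $t^{27}$ resp.\ $t^{26}$. From this series I would compute $-\log$ of a first non-vanishing derivative of $P$ at $u\equiv 0$, re-expressed through the local parameter $\xi$ via the parameterization near infinity and $\mathcal{A}(\xi)$, producing the analogue of~\eqref{C34Psisigma} and~\eqref{C35Psisigma}.

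In parallel I would expand $\log\psi(\xi)$ straight from the definition~\eqref{PsiDef}, inserting the regularized integral~\eqref{IntReg} with the still-unknown vector $c(\lambda)$; this gives the analogue of~\eqref{C34Psi} and~\eqref{C35Psi}, a $\xi$-series whose coefficients are affine in the unknowns $c_1,\dots,c_6$. Equating the two expansions coefficient by coefficient at $\xi^{w_1},\dots,\xi^{w_6}$ produces a system of \emph{linear} equations for $c_1,\dots,c_6$ that is triangular with respect to ascending weight, hence uniquely solvable; I expect its solution to be the stated $c_{(3,7)}$ and $c_{(4,5)}$, with uniqueness following exactly as in Theorem~\ref{T:Reg34}.

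The principal obstacle is the sigma expansion itself: recursively solving the heat-equation system for a genus-$6$ curve up to $\lambda$-weight $11$ involves a very large number of monomials in $(u_1,\dots,u_6)$ and in the curve parameters $\lambda$, and correctly pinning down the leading Schur--Weierstrass term together with the first non-vanishing derivative of weight $-g$ (cf.\ Remark~\ref{R:SigmaDer}) is where errors are easiest to make. A secondary subtlety is that $\mathcal{V}_{(4,5)}$ is the first non-trigonal case ($n=4>3$), so the branching of the local parameterization at infinity and the precise pattern of vanishing derivatives of $P$ differ from the two trigonal examples and must be re-derived rather than transcribed from Lemmas~\ref{L:PsiC34} and~\ref{L:PsiC35}.
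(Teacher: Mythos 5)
You take a route that is sound in principle but genuinely different from the paper's: the paper proves Theorem~\ref{T:RegCg6} by its \emph{second} method, not the sigma-expansion method you propose. Concretely, the paper derives the required relations between Abelian functions --- $16$ for $\mathcal{V}_{(3,7)}$ and $18$ for $\mathcal{V}_{(4,5)}$, stated in Lemmas~\ref{L:ind4WPC37} and~\ref{L:ind4WPC45} of Appendices~\ref{A:Reg37} and~\ref{A:Reg45} and obtained by differentiating the residue identity \eqref{ZetaDef} --- and then reads off $c_{(3,7)}$ and $c_{(4,5)}$ from the expansion of the bilinear equality $\sigma\big(u-\mathcal{A}(\xi)\big)\sigma\big(u+\mathcal{A}(\xi)\big)\big/\big(\psi^2(\xi)\sigma^2(u)\big)=\phi_{2g}\big(x(\xi),y(\xi);u\big)$, exactly as in Proof~2 of Theorems~\ref{T:Reg34} and~\ref{T:Reg35}. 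Your plan is instead the genus-$6$ version of Proof~1, and its technical skeleton is correct: the first non-vanishing derivative operators of $P(\xi,u)$ at $u\equiv 0$ have Sato weight $-\wgt\sigma(u)-g$, indeed $10$ for $(3,7)$ and $9$ for $(4,5)$; the gap sequences $(1,2,4,5,8,11)$ and $(1,2,3,6,7,11)$ do fix the weights of the components of $c(\lambda)$; and matching $\log\psi(\xi)$ against the sigma series coefficient by coefficient would yield a weight-triangular linear system with a unique solution. Two points deserve emphasis, though. First, your reason for discarding the paper's method is inverted relative to the authors' judgment: they regard the derivation of the Abelian-function relations as the tractable part (the technique based on \eqref{ZetaDef} is advertised as applicable to any curve and easy to use), while it is precisely your route that they declare impractical, since the number of sigma-series terms grows exponentially --- for these curves one needs $2g=12$ graded terms through $\lambda$-weight $11$, i.e., up to order $t^{27}$ for $(3,7)$ and $t^{26}$ for $(4,5)$, each a large polynomial in $u_1,\dots,u_6$ and $\lambda$; this is exactly why the paper ``omits this method'' here. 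Second, as you correctly flag, the vanishing pattern of the derivatives of $P$ at $u\equiv 0$ and the identification of the first non-vanishing ones with $\psi(\xi)$ must be re-proved for the non-trigonal curve $\mathcal{V}_{(4,5)}$. So what your route buys is freedom from the appendix lemmas, at the cost of sigma expansions the authors considered prohibitive; what the paper's route buys is a finite, already-recorded computation, at the cost of the sixteen-plus-eighteen relations. Until the expansions and the vanishing lemmas are actually produced, your text remains a viable programme rather than a completed proof.
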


The first method of computing regularization constants, see Section~\ref{s:Reg34} or~\ref{s:Reg35}, requires the series expansion for sigma function. Computational complexity increases with genus, one needs $2g$ terms with coefficients of Sato weights from $0$ to $2g-1$, which are polynomials in~$\lambda$. The total number of terms needed to be computed grows exponentially. In the case of $(3,7)$- and $(4,5)$-curves we omit this method.

The second method uses relations between Abelian functions, the number of which also increases. Actually we need $2$ relations for $\mathcal{V}_{(3,4)}$, then $7$ for $\mathcal{V}_{(3,5)}$, $16$ for $\mathcal{V}_{(3,7)}$ and $18$ for $\mathcal{V}_{(4,5)}$. Auxiliary lemmas, introducing the relations in the cases of $(3,7)$- and $(4,5)$-curves, are given in Appendices \ref{A:Reg37} and \ref{A:Reg45}. To prove Theorem~\ref{T:RegCg6} we again use the equality, similar to the one for hyperelliptic case from \cite{BL2005},
\begin{gather*}
 \frac{\sigma\big(u-\mathcal{A}(\xi)\big)\sigma\big(u+\mathcal{A}(\xi)\big)} {\psi^2(\xi)\sigma^2(u)} = \phi_{2g}\big(x(\xi),y(\xi);u\big),
\end{gather*}
where the rational function $\phi_{2g}\big(x(\xi),y(\xi);u\big)$ can be found with the help of \eqref{ZetaDef} as well as the relations between Abelian functions.

\section{Concluding remarks}
In the paper we introduce regularization procedure for the second kind integral defined on a plane algebraic curve. This regularization is an extension of the standard textbook regularization, known for the elliptic case, to a wider collection of curves, namely $(n,s)$-curves. The proposed regularization is combined with parameterization in the vicinity of infinity, which is a special point on an $(n,s)$-curve where all sheets come together. We prove the importance of a~correct regularization constant in the definition of the second kind integral~\eqref{IntReg} parameterized near infinity. The regularization constant does not serve as an integration constant, it arises only under parameterization and corresponds to the case of zero integration constant. The choice of the constant is significant in connection to the definition of the primitive function~$\psi$, see~\eqref{PsiDef}, as a function of the complex parameter~$\xi$ near infinity. As conjectured in \cite{BL2005} and proven for some particular cases in the present paper, the primitive function coincides with a certain derivative of sigma-function on the Abel's image of~$\xi$, see Remark~\ref{R:SigmaDer}. This is true only for the correct choice of the regularization constant in the definition of the second kind integral.

{\sloppy Furthermore, the primitive function occurs in polylinear equalities of the forms~\eqref{BLRC34} and~\eqref{TriLinRF}, introduced in~\cite{BL2005} with regard to hyperelliptic curves. We call the equalities polylinear, in general, or bilinear and trilinear in the mentioned cases, since the left hand sides are results of the action of polylinear operators, for more details see~\cite{BL2005}. A~bilinear equality of the type~\eqref{BLRC34} allows to obtain relations between Abelian functions, as seen from Proofs~2 of Theorems~\ref{T:Reg34} and~\ref{T:Reg35}. Such equalities are used in the proof of Theorem~\ref{T:RegCg6}. In the paper we compare the relations between Abelian functions, obtained from the bilinear equality relating to a~curve under consideration, with the same relations obtained independently, and compute a~regularization constant. Conversely, when the regularization constant of the second kind integral on the curve is known, the bilinear equality produces all relations between Abelian functions. A~trilinear equality of the type~\eqref{TriLinRF} leads to addition formulas, as we show in Section~\ref{ss:AddF}. So the primitive function provides a new way to obtain relations between Abelian functions, and addition laws. And the correct regularization constant is essential for obtaining consistent relations and true addition laws.

}

\looseness=-1 In addition to these results, we suggest a new technique of obtaining relations between Abelian functions, that is to derive the relations from \eqref{ZetaDef} where the second kind integral on a divisor is computed by the residue theorem. In the paper we consider the difference of the second kind integrals on two divisors in order to cancel the regularization constant. This technique is displayed by the examples of proving Lemmas~\ref{L:ind4WPC34} and~\ref{L:ind4WPC35}, and was used to prove Lemmas~\ref{L:ind4WPC37} and~\ref{L:ind4WPC45}. The technique is applicable to any curve, and easy to use, unlike the known way using Klein's formula. The latter demands the knowledge of Klein's fundamental 2-form, which is another problem, rather difficult for non-hyperelliptic case. Recently, in~\cite{Suz2017} it was solved for a wide class of curves including $(n,s)$-curves. In the case of trigonal curves the problem was solved before in~\cite{BEL2000}.

\appendix

\section{First kind and associated second kind differentials}\label{A:Diffs}
According to the scheme proposed in \cite{BL2008} we compute the second kind differentials associated to the standard first kind differentials. The both collections are holomorphic on the corresponding curve punctured at infinity.

First and second kind differentials on curve $\mathcal{V}_{(3,5)}$
\begin{gather}
 \rmd u(x,y,\lambda) =\begin{pmatrix}
 x^2 \\ y \\x \\ 1
 \end{pmatrix} \frac{\rmd x}{\partial_y f}, \qquad
 \rmd r(x,y,\lambda) = - \begin{pmatrix}
 y x \\ 2 x^3 + \lambda_1 y x \\ \rho_4 \\ \rho_7
 \end{pmatrix}\frac{dx}{\partial_y f}, \label{Diffs35} \\
 \rho_4 = 4 y x^2 - \tfrac{2}{3} \lambda_1^2 x^3 - \tfrac{2}{3} \lambda_4 \lambda_1 x^2, \nonumber\\
 \rho_7 = 7 y x^3 + \tfrac{4}{5} \lambda_1^3 y x^2 - 2\lambda_4 \lambda_1 x^3 + \big(3\lambda_6 + \tfrac{8}{15}\lambda_4\lambda_1^2\big) y x - \big(\tfrac{2}{3}\lambda_4^2 -\tfrac{4}{5}\lambda_6\lambda_1^2
 + \tfrac{4}{3}\lambda_7\lambda_1\big)x^2 \nonumber\\
 \hphantom{\rho_7 =}{} + \big(\lambda_9+\tfrac{4}{15}\lambda_7\lambda_1^2\big)y - \big(\tfrac{2}{3}\lambda_{10}\lambda_1-\tfrac{8}{15}\lambda_9\lambda_1^2 +\tfrac{2}{3}\lambda_7\lambda_4\big)x. \notag
\end{gather}

First and second kind differentials on curve $\mathcal{V}_{(3,7)}$
\begin{gather}
 \rmd u(x,y,\lambda) =\begin{pmatrix}
 yx \\ x^3 \\ y \\ x^2 \\x \\ 1
 \end{pmatrix} \frac{\rmd x}{\partial_y f}, \qquad
 \rmd r(x,y,\lambda) = - \begin{pmatrix}
 x^4 \\ 2 y x^2 \\ \rho_4 \\ \rho_5 \\ \rho_8 \\ \rho_{11}
 \end{pmatrix}\frac{dx}{\partial_y f}, \label{Diffs37} \\
\notag \rho_4 = 4x^5 + 2\lambda_2 y x^2 + \lambda_5 yx + 2\lambda_6 x^3, \\
 \rho_5 = 5 y x^3 - \tfrac{2}{3} \lambda_2^2 x^4 + \lambda_6 yx - \tfrac{2}{3} \lambda_2 \lambda_5 x^3 - \lambda_9 y, \nonumber\\
 \rho_8 = 8 y x^4 - \tfrac{4}{3} \lambda_2^2 x^5 + 4\lambda_6 y x^2 - 2\lambda_2 \lambda_5 x^4 + 2\lambda_9 y x - \tfrac{2}{3} \big(2\lambda_2 \lambda_8\! + \lambda_5^2\big) x^3
 - \tfrac{2}{3}\big(\lambda_2\lambda_{11} \!+ \lambda_5 \lambda_8 \big) x^2, \nonumber\\
 \rho_{11} = 11 y x^5 + \big(7\lambda_6 + \tfrac{8}{7}\lambda_2^3\big) yx^3 - \tfrac{10}{3} \lambda_2 \lambda_5 x^5 + \big(5\lambda_9
 + \tfrac{6}{7} \lambda_2^2 \lambda_5\big) y x^2 \nonumber\\
\hphantom{\rho_{11} =}{} - \big(\tfrac{8}{3} \lambda_2 \lambda_8 - \tfrac{10}{7} \lambda_2^2 \lambda_6 + \tfrac{4}{3} \lambda_5^2\big) x^4
 + \big(3\lambda_{12} + \tfrac{4}{7} \lambda_2^2 \lambda_8\big) y x
 - \big(2\lambda_2 \lambda_{11} - \tfrac{8}{7} \lambda_2^2 \lambda_9 + 2\lambda_5 \lambda_8 \big) x^3\nonumber\\
 \hphantom{\rho_{11} =}{} + \big(\lambda_{15} + \tfrac{2}{7} \lambda_2^2 \lambda_{11}\big) y - \big(\tfrac{4}{3}\lambda_2 \lambda_{14} - \tfrac{6}{7} \lambda_2^2 \lambda_{12}
 + \tfrac{4}{3} \lambda_5 \lambda_{11} + \tfrac{2}{3} \lambda_8^2 \big) x^2 \nonumber \\
\hphantom{\rho_{11} =}{} + \big(\tfrac{4}{7} \lambda_2^2 \lambda_{15} - \tfrac{2}{3} \lambda_5 \lambda_{14} - \tfrac{2}{3} \lambda_8 \lambda_{11}\big) x.\nonumber
\end{gather}

First and second kind differentials on the curve $\mathcal{V}_{(4,5)}$
\begin{gather}
 \rmd u(x,y,\lambda) =\begin{pmatrix}
 y^2 \\ yx \\ x^2 \\ y \\ x \\ 1
 \end{pmatrix} \frac{\rmd x}{\partial_y f}, \qquad
 \rmd r(x,y,\lambda) = - \begin{pmatrix}
 x^3 \\ 2 y x^2 \\ 3 y^2 x + \lambda_2 x^3 \\ \rho_6 \\ \rho_7 \\ \rho_{11}
 \end{pmatrix}\frac{dx}{\partial_y f}, \label{Diffs45} \\
\notag
 \rho_6 = 6 y x^3 + 3\lambda_3 y^2 x - \lambda_2^2 y x^2 - \tfrac{1}{2} \lambda_2 \lambda_3 x^3 + \lambda_7 y^2 + \big(2\lambda_8 - \lambda_2 \lambda_6\big) y x - \tfrac{1}{2} \lambda_2 \lambda_7 x^2, \\
 \rho_7 = 7 y^2 x^2 - \tfrac{6}{5} \lambda_2^2 y^2 x - \tfrac{23}{10} \lambda_2 \lambda_3 y x^2 + \big(2\lambda_6 - \tfrac{3}{4}\lambda_3^2\big) x^3
 + \big(\lambda_8 - \tfrac{3}{5} \lambda_2 \lambda_6\big) y^2\nonumber \\
 \hphantom{\rho_7 =}{} - \big(\tfrac{1}{2} \lambda_3 \lambda_6 +\tfrac{6}{5} \lambda_2 \lambda_7\big) yx + \big(\lambda_{10}
 - \tfrac{4}{5} \lambda_2 \lambda_8 - \tfrac{3}{4}\lambda_3 \lambda_7 \big) x^2 - \big(\tfrac{1}{10}\lambda_2 \lambda_{11} + \tfrac{1}{2}\lambda_3 \lambda_{10}\big) y,\nonumber \\
 \rho_{11} = 11 y^2 x^3 -\tfrac{20}{9} \lambda_2^2 y^2 x^2 - \tfrac{13}{3} \lambda_3 \lambda_2 y x^3
 + \big(5\lambda_8 -\tfrac{49}{15} \lambda_2 \lambda_6 + \tfrac{3}{5}\lambda_2 \lambda_3^2\big) y^2 x\nonumber\\
 \hphantom{\rho_{11} =}{}
 - \big(\tfrac{59}{18}\lambda_2 \lambda_7 + \tfrac{17}{5} \lambda_3 \lambda_6 - \tfrac{9}{10}\lambda_3^3\big)y x^2 + \big(3\lambda_{10} - \tfrac{8}{9}\lambda_2 \lambda_8
 - \tfrac{9}{4}\lambda_3\lambda_7\big) x^3 \nonumber\\
 \hphantom{\rho_{11} =}{} - \big(\tfrac{20}{9}\lambda_2 \lambda_{11} + \lambda_3 \lambda_{10} + \tfrac{21}{10} \lambda_6 \lambda_7 - \tfrac{3}{5}\lambda_3^2\lambda_7\big) y x
 + \big(2\lambda_{12} - \tfrac{10}{9}\lambda_2 \lambda_{10} -\tfrac{4}{5}\lambda_6^2
 + \tfrac{3}{10}\lambda_3^2 \lambda_6 \big) y^2\nonumber\\
 \hphantom{\rho_{11} =}{} - \big(\tfrac{4}{3} \lambda_2 \lambda_{12}
 + \tfrac{3}{2}\lambda_3 \lambda_{11} + \tfrac{2}{5} \lambda_6 \lambda_8 - \tfrac{9}{10}\lambda_3^2 \lambda_8
 + \tfrac{3}{4}\lambda_7^2 \big) x^2 \nonumber\\
\hphantom{\rho_{11} =}{} - \big(\tfrac{7}{6}\lambda_2 \lambda_{15} + \tfrac{4}{5} \lambda_6 \lambda_{11} - \tfrac{3}{10} \lambda_3^2 \lambda_{11}
 + \tfrac{1}{2}\lambda_7 \lambda_{10}\big)y \nonumber\\
\hphantom{\rho_{11} =}{} - \big(\tfrac{53}{45}\lambda_2 \lambda_{16} + \tfrac{3}{4}\lambda_3 \lambda_{15} + \tfrac{3}{5}\lambda_6 \lambda_{12}
 - \tfrac{3}{5}\lambda_3^2 \lambda_{12} + \tfrac{3}{4}\lambda_7 \lambda_{11} - \lambda_8 \lambda_{10}\big) x.\nonumber
\end{gather}

\section{Singular parts of second kind integrals}\label{A:singPart}
For $(3,5)$-, $(3,7)$- and $(4,5)$-curves the principle parts of second kind integrals in Laurent series about infinity are the following
\begin{gather*}
 \mathcal{V}_{(3,5)}\colon \ r_{\textup{sing}} (\xi) = \begin{pmatrix} -\xi^{-1} \\
 -\xi^{-2} - \frac{1}{3}\lambda_1 \xi^{-1} \vspace{1mm}\\
 \xi^{-4} + \frac{1}{9}\lambda_1^2 \xi^{-2} - \frac{2}{3^4}\lambda_1^3 \xi^{-1} \\
 r_7^{\textup{sing}} \end{pmatrix}, \\ 
 \hphantom{\mathcal{V}_{(3,5)}\colon}{} \ r_7^{\textup{sing}} =
 -\xi^{-7} + \tfrac{7}{5\cdot 9} \lambda_1^2 \xi^{-5} + \tfrac{46}{5\cdot 3^4} \lambda_1^3 \xi^{-4}
 + \big(\tfrac{2}{9}\lambda_1 \lambda_4 - \tfrac{22}{5\cdot 3^6} \lambda_1^6\big) \xi^{-2} \notag \\
\hphantom{\mathcal{V}_{(3,5)}\colon \ r_7^{\textup{sing}} =}{}
 - \big(\tfrac{2}{3} \lambda_6 + \tfrac{22}{5 \cdot 3^3} \lambda_1^2 \lambda_4
 -\tfrac{71}{5\cdot 3^8} \lambda_1^6\big) \xi^{-1}; \notag\\
 \mathcal{V}_{(3,7)}\colon \ r_{\textup{sing}} (\xi) =\begin{pmatrix} \xi^{-1} \\
 \xi^{-2} \\ -\xi^{-4} + \tfrac{1}{3}\lambda_2 \xi^{-2} \vspace{1mm}\\ -\xi^{-5} - \tfrac{1}{9}\lambda_2^2 \xi^{-1}\vspace{1mm}\\
 \xi^{-8} + \tfrac{1}{9}\lambda_2^2 \xi^{-4}
 + \big(\tfrac{2}{3}\lambda_6 + \tfrac{2}{3^4}\lambda_2^3\big) \xi^{-2}
 - \tfrac{2}{9}\lambda_2 \lambda_5 \xi^{-1}
 \\ r_{11}^{\textup{sing}} \end{pmatrix},\\ 
\hphantom{\mathcal{V}_{(3,7)}\colon}{} \ r_{11}^{\textup{sing}} = -\xi^{-11} +\tfrac{11}{7\cdot 9} \lambda_2^2 \xi^{-7} -
 \big(\tfrac{2}{3} \lambda_6 + \tfrac{68}{7\cdot 3^4} \lambda_2^3 \big)\xi^{-5}
 + \tfrac{2}{9} \lambda_2 \lambda_5 \xi^{-4} \notag \\
\hphantom{\mathcal{V}_{(3,7)}\colon \ r_{11}^{\textup{sing}} =}{}
+ \big(\tfrac{2}{3}\lambda_9 + \tfrac{32}{7\cdot 3^3}\lambda_2^2 \lambda_5\big) \xi^{-2} - \big(\tfrac{2}{9}\lambda_2 \lambda_8 - \tfrac{32}{7\cdot 3^3}\lambda_2^2 \lambda_6 +
 \tfrac{1}{9}\lambda_5^2 - \tfrac{32}{7\cdot 3^6} \lambda_2^5 \big) \xi^{-1}; \notag\\
 \mathcal{V}_{(4,5)}\colon \ r_{\textup{sing}} (\xi) =\begin{pmatrix} -\xi^{-1} \\
 \xi^{-2} \\ -\xi^{-3} - \tfrac{1}{4}\lambda_2 \xi^{-1} \vspace{1mm}\\ -\xi^{-6} - \tfrac{1}{2}\lambda_3 \xi^{-3}
 - \tfrac{1}{8} \lambda_2^2 \xi^{-2} + \tfrac{1}{8} \lambda_2 \lambda_3 \xi^{-1} \vspace{1mm}\\
 r_{7}^{\textup{sing}} \\ r_{11}^{\textup{sing}}
 \end{pmatrix}, \\ 
\hphantom{\mathcal{V}_{(4,5)}\colon}{} \ r_{7}^{\textup{sing}} = \xi^{-7} - \tfrac{7}{5\cdot 4} \lambda_2 \xi^{-5} + \tfrac{29}{5\cdot 2^5} \lambda_2^2 \xi^{-3}
 - \tfrac{11}{5\cdot 8}\lambda_2 \lambda_3 \xi^{-2} - \big(\tfrac{1}{4} \lambda_6 - \tfrac{3}{2^5} \lambda_3^2
 + \tfrac{17}{5\cdot 2^7} \lambda_2^3\big) \xi^{-1}, \notag \\
\hphantom{\mathcal{V}_{(4,5)}\colon}{} \ r_{11}^{\textup{sing}} = -\xi^{-11} + \tfrac{11}{9\cdot 4} \lambda_2 \xi^{-9}
 - \tfrac{7^2}{9\cdot 2^5} \lambda_2^2 \xi^{-7} + \tfrac{19}{9\cdot 8}\lambda_2 \lambda_3 \xi^{-6}
 - \!\big(\tfrac{11}{5\cdot 4}\lambda_6 - \tfrac{33}{5\cdot 2^5} \lambda_3^2
 - \tfrac{29}{9\cdot 2^7} \lambda_2^3\big) \xi^{-5} \notag\\
\hphantom{\mathcal{V}_{(4,5)}\colon \ r_{11}^{\textup{sing}} =}{}
 - \big(\tfrac{3}{4} \lambda_8 - \tfrac{2\cdot 16}{9\cdot 5\cdot 2^4} \lambda_2 \lambda_6
 + \tfrac{757}{9\cdot 5\cdot 2^7} \lambda_2 \lambda_3^2 - \tfrac{5^3}{9\cdot 2^{11}} \lambda_2^4\big) \xi^{-3}\nonumber\\
\hphantom{\mathcal{V}_{(4,5)}\colon \ r_{11}^{\textup{sing}} =}{} - \big(\tfrac{19}{9\cdot 8} \lambda_2 \lambda_7 + \tfrac{13}{5\cdot 8} \lambda_3 \lambda_6
 - \tfrac{17}{5\cdot 2^5} \lambda_3^3 + \tfrac{19}{9 \cdot 2^6} \big) \xi^{-2} \notag \\
\hphantom{\mathcal{V}_{(4,5)}\colon \ r_{11}^{\textup{sing}} =}{}
- \big(\tfrac{1}{4}\lambda_{10} - \tfrac{11}{9\cdot 2^4}\lambda_2 \lambda_8 - \tfrac{3}{2^4} \lambda_3 \lambda_7 + \tfrac{479}{9\cdot 5\cdot 2^7} \lambda_2^2 \lambda_6 - \tfrac{659}{5\cdot 3\cdot 2^{10}}
 \lambda_2^2 \lambda_3^2 + \tfrac{163}{9\cdot 2^{13}} \lambda_2^5\big)\xi^{-1}. \notag
\end{gather*}

\section[Sigma expansion for $(3,5)$-curve]{Sigma expansion for $\boldsymbol{(3,5)}$-curve}\label{A:TermsC35}
Series expansion for sigma function related to $(3,5)$-curve of the form \eqref{C35eq} was computed on the base of the theory of multivariate sigma-functions presented in \cite{BL2004,BL2008}. The expansion has the form
\begin{gather*}
 \sigma(t u_1,t^2 u_2,t^4 u_4,t^7 u_7) = S_8 t^8 + S_9 t^9 + \cdots + S_{15} t^{15} + O\big(t^{16}\big),
\end{gather*}
where
\begin{gather*}
S_8 = u_4^2- u_7 u_1 - u_1^2 u_2 u_4 -\frac{u_2^4}{4} -\frac{u_1^4 u_2^2}{8} + \frac{u_1^8}{7\cdot 2^6} ,\\
S_9 = -\frac{\lambda_1}{2} u_4 u_2^2 u_1 - \frac{\lambda_1}{5!} u_4 u_1^5 - \frac{\lambda_1}{2\cdot 3!} u_2^3 u_1^3 + \frac{\lambda_1}{2\cdot 5!} u_2 u_1^7,\\
S_{10} =-\frac{\lambda_1^2}{4!} u_7 u_1^3 + \frac{\lambda_1^2}{8} u_4^2 u_1^2 - \frac{\lambda_1^2}{4!} u_4 u_2 u_1^4 - \frac{\lambda_1^2}{2^5} u_2^4 u_1^2 - \frac{\lambda_1^2}{5\cdot 2^6} u_2^2 u_1^6 + \frac{89 \lambda_1^2}{20\cdot 8!} u_1^{10},\\
S_{11} = -\frac{\lambda_1^3}{2^3 \cdot 3!} u_4 u_2^2 u_1^3 - \frac{3\lambda_1^3}{4\cdot 7!} u_4 u_1^7 - \frac{\lambda_1^3}{4\cdot 5!} u_2^3 u_1^5 + \frac{23 \lambda_1^3}{3\cdot 8!} u_2 u_1^9,\\
S_{12} = -\frac{2^5\lambda_4 + \lambda_1^4}{2^4\cdot 5!} u_7 u_1^5 - \frac{\lambda_4}{2} u_4^2 u_2^2 + \frac{2^4 \lambda_4 + \lambda_1^4}{2^4 \cdot 4!} u_4^2 u_1^4 + \frac{\lambda_4}{6} u_4 u_2^3 u_1^2 + \frac{2^4 \lambda_4 - \lambda_1^4}{2^4\cdot 5!} u_4 u_2 u_1^6 \\
\hphantom{S_{12} =}{} + \frac{\lambda_4}{4!} u_2^6 + \frac{3\cdot 2^4 \lambda_4 - \lambda_1^4}{2^6\cdot 4!} u_2^4 u_1^4 -
 \frac{3}{2^3\cdot 8!}\big(7 \cdot 2^3 \lambda_4 + 3 \lambda_1^4\big) u_2^2 u_1^8\\
\hphantom{S_{12} =}{} + \frac{5}{2^5\cdot 11!} \big(2 \cdot 6^3 \lambda_4 + 5 \cdot 137 \lambda_1^4\big) u_1^{12},\\
S_{13} = \frac{\lambda_1\lambda_4}{6} u_4^3 u_1 - \frac{\lambda_1\lambda_4}{2\cdot 3!} u_4^2 u_2 u_1^3
 + \frac{\lambda_1\lambda_4}{4!} u_4 u_2^4 u_1 - \frac{\lambda_1}{2^5 \cdot 5!} \big(2^4 \lambda_4 + \lambda_1^4\big) u_4 u_2^2 u_1^5\\
 \hphantom{S_{13} =}{}
 - \frac{\lambda_1}{8!} \left(13 \lambda_4 - \frac{1}{4!}\lambda_1^4\right) u_4 u_1^9 + \frac{\lambda_1 \lambda_4}{2^3 \cdot 3!} u_2^5 u_1^3
 - \frac{3 \lambda_1}{2^5\cdot 7!} \big(3\cdot 2^4 \lambda_4 + \lambda_1^4\big) u_2^3 u_1^7\\
\hphantom{S_{13} =}{} + \frac{3 \lambda_1}{2^3\cdot 10!}\big(4! \lambda_4 + 41 \lambda_1^4\big) u_2 u_1^{11},\\
 S_{14} = \frac{1}{2^6\cdot 7!} \big(2^6\cdot 4! \lambda_6 - 2^2\cdot 4! \lambda_1^2 \lambda_4
 - \lambda_1^6 \big) u_7 u_1^7 - \lambda_6 u_4^3 u_2 + \frac{1}{2^4} \big(12 \lambda_6 - \lambda_1^2 \lambda_4 \big) u_4^2 u_2^2 u_1^2 \\
\hphantom{S_{14} =}{} + \frac{1}{2^6\cdot 6!}\big( 3\cdot 2^6 \lambda_6 + 3\cdot 2^4\lambda_1^2 \lambda_4
 + \lambda_1^6 \big) u_4^2 u_1^6 + \frac{3\lambda_6}{20} u_4 u_2^5
 + \frac{1}{3!\cdot 4!}\big(3\cdot 3!\lambda_6 + \lambda_1^2 \lambda_4\big) u_4 u_2^3 u_1^4\\
 \hphantom{S_{14} =}{} - \frac{1}{2^6\cdot 7!} \big(2^4 \cdot 39 \lambda_6 - 2^4 \cdot 19 \lambda_1^2 \lambda_4 +
 \lambda_1^6 \big) u_4 u_2 u_1^8 - \frac{1}{2^3\cdot 5!} \big(12\lambda_6 - 5\lambda_1^2 \lambda_4 \big) u_2^6 u_1^2 \\
\hphantom{S_{14} =}{} - \frac{1}{2^8\cdot 6!} \big(3\cdot 2^6 \lambda_6 - 3! 4! \lambda_1^2 \lambda_4 + \lambda_1^6 \big) u_2^4 u_1^6
 + \frac{3}{2^5\cdot 10!} \big( 2\cdot 6! \lambda_6 - 33\cdot 4! \lambda_1^2 \lambda_4 - 5 \lambda_1^6 \big) u_2^2 u_1^{10} \\
 \hphantom{S_{14} =}{} + \frac{3}{2^5\cdot 14!} \big(22 \cdot 3! \cdot 6! \lambda_6 + 2\cdot 383 \cdot 5! \lambda_1^2 \lambda_4
 + 39373\lambda_1^6\big) u_1^{14},\\
 S_{15} = -\frac{\lambda_7}{2} u_7 u_4 u_2^2 - \frac{\lambda_7}{4!} u_7 u_4 u_1^4 +\frac{2\lambda_7}{4!} u_7 u_2^3 u_1^2 + \frac{3\lambda_7}{2\cdot 5!} u_7 u_2 u_1^6\\
 \hphantom{S_{15} =}{} + \frac{1}{3! \cdot 4!} \big(4! \lambda_7 - 2 \cdot 3! \lambda_1\lambda_6 + \lambda_1^3 \lambda_4 \big) u_4^3 u_1^3
 + \frac{1}{12} \big(2 \lambda_7 + \lambda_1\lambda_6 \big) u_4^2 u_2^3 u_1\\
 \hphantom{S_{15} =}{} + \frac{1}{4 \cdot 5!}\big( 4 \lambda_7 + 6\lambda_1\lambda_6 - \lambda_1^3 \lambda_4 \big) u_4^2 u_2 u_1^5
 + \frac{1}{(4!)^2} \big(4! \lambda_7 + 6^2 \lambda_1 \lambda_6 + \lambda_1^3 \lambda_4\big) u_4 u_2^4 u_1^3\\
 \hphantom{S_{15} =}{} + \frac{1}{2^7 \cdot 7!} \big(3 \cdot 2^7 \lambda_7 - 39 \cdot 2^6 \lambda_1\lambda_6 - 3 \cdot 2^4 \lambda_1^3 \lambda_4 - \lambda_1^7 \big) u_4 u_2^2 u_1^7\\
 \hphantom{S_{15} =}{} + \frac{1}{2^5 \cdot 11!} \big(3\cdot (5!)^2 \lambda_7 - 79 \cdot 2 \cdot 3! \cdot 4! \lambda_1\lambda_6
 + 40 \cdot 19\cdot 23 \lambda_1^3 \lambda_4 - 5 \lambda_1^7 \big) u_4 u_1^{11} \\
 \hphantom{S_{15} =}{} - \frac{3}{7!}\big(10\lambda_7 + 3 \lambda_1\lambda_6 \big) u_2^7 u_1 - \frac{3}{2\cdot (5!)^2} \big(20\lambda_7 + 6 \lambda_1\lambda_6 - 5 \lambda_1^3 \lambda_4 \big) u_2^5 u_1^5 \\
 \hphantom{S_{15} =}{} + \frac{1}{2^5\cdot 9!} \big(6^2 \cdot 4! \lambda_7 + 6! \lambda_1\lambda_6 - 20^2 \lambda_1^3 \lambda_4 - \lambda_1^7 \big) u_2^3 u_1^9\\
 \hphantom{S_{15} =}{} + \frac{3}{2^6\cdot 11!} \big(2\cdot 6! \lambda_7 - 21 \cdot 2^4 \lambda_1\lambda_6 + 40 \lambda_1^3 \lambda_4 + 53\lambda_1^7 \big) u_2 u_1^{13}.
\end{gather*}

\section[Expansion of bilinear equality related to $(3,5)$-curve]{Expansion of bilinear equality related to $\boldsymbol{(3,5)}$-curve}\label{A:BLRs}
The following relation holds between sigma-function $\sigma(u)$, $u = (u_1,u_2,u_4,u_7)\in \Complex^4$, primitive function $\psi$, Abel's map $\mathcal{A}$ on curve $\mathcal{V}_{(3,5)}$, and Abelian functions $\wp_{i,j}$, $\wp_{i,j,k}$, $\wp_{i,j,k,l}$ on the Jacobian of the curve
\begin{gather*}
 0= \frac{\sigma\big(u-\mathcal{A}(\xi)\big)\sigma\big(u+\mathcal{A}(\xi)\big)} {\psi^2(\xi)\sigma^2(u)} - \phi_{8}\big(x(\xi),y(\xi);u\big)\\
 \hphantom{0}{} = (2 c_1 + \lambda_1) \xi^{-7}
 + \left(c_2 + 2c_1^2 + \frac{7\lambda_1}{3} c_1 + \frac{14 \lambda_1^2}{15} \right)\xi^{-6} \\
\hphantom{0=}{}+ \left(2 c_2 c_1 + \frac{4}{3} c_1^3 + \frac{4 \lambda_1}{3}c_2 + 2 \lambda_1 c_1^2
 + \frac{2^{6} \lambda _1^2}{45} c_1 + \frac{7 \lambda _1^3}{15}
 - 2 c_1 \wp _{1,1} - \lambda _1 \wp _{1,1}\right) \xi^{-5}\\
 \hphantom{0=}{} + \left( \frac{c_2^2-c_4}{2} + 2 c_2 c_1^2 + \frac{2}{3} c_1^4 + \frac{7 \lambda_1}{3} c_2 c_1 + \frac{10 \lambda_1}{9} c_1^3 + \frac{79 \lambda_1^2}{90} c_2
 + \frac{31 \lambda_1^2}{30} c_1^2 + \frac{509 \lambda _1^3}{3^4 \cdot 10} c_1
 - \frac{\lambda_4}{6}\right.\\
\hphantom{0=}{} + \frac{2^{5} \cdot 11 \lambda_1^4}{3^4 \cdot 25} - (2 c_1 + \lambda_1) \wp_{1,2}
 - \left(c_2 + 2c_1^2 + \frac{5 \lambda_1}{3} c_1 + \frac{3 \lambda_1^2}{5} \right) \wp_{1,1}\\
\hphantom{0=}{}
 + \frac{1}{12} \big( 6 \wp_{1,1}^2 - \wp_{1,1,1,1} - 3\wp_{2,2} + 2 \lambda_1 \wp_{1,2}
 + \lambda_1^2 \wp_{1,1} + 2\lambda_4\big) \bigg) \xi^{-4} \\
\hphantom{0=}{} + \left((c_2^2 - c_4) c_1
 + \frac{4}{3} c_2 c_1^3 + \frac{4}{15} c_1^5 - \frac{8 \lambda _1}{15} c_4
 + \frac{2}{3} c_2^2 \lambda _1 + 2 \lambda_1 c_2 c_1^2 - \frac{13 \lambda_4}{15} c_1
 + \frac{4 \lambda_1}{9} c_1^4\right.\\
\hphantom{0=}{}
+ \frac{59 \lambda _1^2}{45} c_2 c_1 + \frac{7 \lambda_1^2}{15} c_1^3 + \frac{2\lambda _1^3}{5} c_2
 + \frac{19 \lambda _1^3}{45} c_1^2 + \frac{2 \cdot 79 \lambda _1^4}{3^3 \cdot 25} c_1
 - \frac{4 \lambda _1 \lambda _4}{9} + \frac{8 \lambda_1^5}{3^3\cdot 5}\\
\hphantom{0=}{}
 - \left(c_2 + 2c_1^2 + \frac{7\lambda_1}{3} c_1 + \frac{14}{15}\lambda_1^2 \right) \wp_{1,2}\\
\hphantom{0=}{}
 - \left(2 c_1 c_2 + \frac{4}{3} c_1^3 + \lambda_1 c_2 + \frac{4 \lambda_1}{3} c_1^2 + \frac{13 \lambda_1^2}{15} c_1 + \frac{4}{15}\lambda_1^3 \right) \wp_{1,1}\\
 \hphantom{0=}{}
 + \frac{1}{6} \big({-} \wp_{1,1,1,2} + 6 \wp_{1,1}\wp_{1,2} - 3 \wp_{1,4}
 - \lambda_1 \wp_{2,2} + \lambda_1^2 \wp_{1,2}\big)\\
\hphantom{0=}{}
 - \frac{1}{6} \left(c_1+\frac{\lambda_1}{3}\right) \big( 6 \wp_{1,1}^2 - \wp_{1,1,1,1} - 3\wp_{2,2} + 2 \lambda_1 \wp_{1,2}
 + \lambda_1^2 \wp_{1,1} + 2\lambda_4\big) \bigg) \xi^{-3}\\
\hphantom{0=}{} +
 \left( \big(c_2^2 - c_4\big) c_1^2 - \frac{c_2 c_4}{2}
 + \frac{c_2^3}{6} + \frac{2}{3} c_2 c_1^4 + \frac{4}{45} c_1^6 - \frac{ \lambda _4}{2}c_2 -\frac{11\lambda_4}{15} c_1^2 - \frac{7^2}{90} \lambda _1 \lambda_4 c_1-\frac{9\lambda_1}{10} c_4 c_1\right. \\
\hphantom{0=}{}
+ \frac{7\lambda_1}{6} c_2^2 c_1 + \frac{10}{9} \lambda_1 c_2 c_1^3 + \frac{2 \lambda_1}{15} c_1^5 - \frac{13\lambda_1^2}{45} c_4 + \frac{37\lambda _1^2}{90} c_2^2 + \frac{83 \lambda_1^2}{90} c_2 c_1^2 + \frac{19 \lambda_1^2}{3^3 \cdot 5} c_1^4\\
\hphantom{0=}{} + \frac{2\cdot 109\lambda_1^3}{3^4 \cdot 5} c_2 c_1+ \frac{7\cdot 13}{3^5 \cdot 2} \lambda_1^3 c_1^3 + \frac{2 \cdot 463 \lambda_1^4}{3^5 \cdot 25} c_2 + \frac{7\cdot 89 \lambda_1^4}{3^4\cdot 50}c_1^2
 + \frac{2^2 \cdot 331 \lambda_1^5}{3^6 \cdot 25} c_1 - \frac{28}{3^3 \cdot 5} \lambda_1^2 \lambda_4 \\
\hphantom{0=}{}
 + \frac{2 \cdot 263}{3^5 \cdot 125}\lambda _1^6 + (2c_1+\lambda_1)\wp_{1,4} - \left(2 c_2 c_1 + \frac{4}{3} c_1^3 + \frac{4 \lambda_1}{3} c_2
 + 2\lambda_1 c_1^2 + \frac{2^{6} \lambda _1^2}{45} c_1 + \frac{7\lambda _1^3}{15}\right)\wp_{1,2}\\
\hphantom{0=}{}
 - \left(\frac{c_2^2-c_4}{2} + 2 c_2c_1^2 + \frac{2}{3} c_1^4 + \frac{5 \lambda_1}{3} c_2 c_1 + \frac{2 \lambda_1}{3} c_1^3 + \frac{7^2 \lambda_1^2}{90} c_2 + \frac{53 \lambda_1^2}{90} c_1^2
 + \frac{59 \lambda _1^3}{3^4\cdot 2} c_1 -\frac{\lambda _4}{6}\right.\\
 \left.\hphantom{0=}{} + \frac{197 \lambda_1^4}{3^4\cdot 25}\right) \wp_{1,1} + \frac{4}{5!} \big(4\wp_{1,1}^2 - \wp_{1,1,1}^2 + \wp_{1,2}^2
 - 4 \wp_{1,1} \wp_{2,2} + 4 \wp_{2,4} + 2 \lambda_1 \wp_{1,1} \wp_{1,2}
 + \lambda_1^2 \wp_{1,1}^2\\
\hphantom{0=}{}
+ 4 \lambda_4 \wp_{1,1} - 4\lambda_6 \big) + \frac{14}{5!}\big(4\wp_{1,2}^2 2\wp_{1,1}\wp_{2,2} + -\wp_{1,1,2,2} + \wp_{2,4} - \lambda_1 \wp_{1,4}\big) \\
\hphantom{0=}{}
 + \frac{8}{6!} (30 c_1 + 13\lambda_1) \big(6 \wp_{1,1}\wp_{1,2} - \wp_{1,1,1,2} - 3 \wp_{1,4}
 - \lambda_1 \wp_{2,2} + \lambda_1^2 \wp_{1,2} \big)\\
\hphantom{0=}{}
 - \frac{2}{6!} \big(30\big(c_2+2c_1^2 + \lambda_1 c_1\big) + \frac{22 \lambda_1^2}{3} - 18 \wp_{1,1} + \partial_{u_1}^2\big)
 \big(6 \wp_{1,1}^2 - \wp_{1,1,1,1} - 3\wp_{2,2} + 2 \lambda_1 \wp_{1,2}\\
\hphantom{0=}{} + \lambda_1^2 \wp_{1,1} + 2\lambda_4\big) \big) \xi^{-2} +
 \left( \frac{2}{7} c_7 - c_1 c_2 c_4
 + \frac{2}{3} c_1^3 \big(c_2^2- c_4\big) + \frac{1}{3}c_1 c_2^3 + \frac{4}{15} c_1^5 c_2 + \frac{8}{3^2\cdot 35} c_1^7 \right.\\
\hphantom{0=}{} - \frac{8}{15} c_2 c_4 \lambda_1 - \frac{11\lambda_1}{15} c_1^2 c_4 + \frac{2\lambda_1}{9} c_2^3 + \lambda_1 c_1^2 c_2^2 + \frac{4 \lambda_1}{9} c_1^4 c_2
 + \frac{4 \lambda_1}{3^2 \cdot 35} c_1^6 - \frac{2\lambda_1^2}{5} c_1 c_4 + \frac{3\lambda_1^2}{5} c_1 c_2^2 \\
\hphantom{0=}{} + \frac{53\lambda_1^2}{3^2 \cdot 35} c_1^3 c_2 + \frac{2 \lambda_1^2}{75} c_1^5 - \frac{2\cdot 853\lambda_1^3}{3^4 \cdot 7 \cdot 25} c_4 + \frac{14 \lambda_1^3}{81} c_2^2 + \frac{2 \cdot 71 \lambda _1^3}{3^4 \cdot 5} c_1^2 c_2 + \frac{79 \lambda_1^3}{3^5 \cdot 5} c_1^4 - \frac{2}{5}\lambda_4 c_1^3 \\
\hphantom{0=}{}
- \frac{13 \lambda_4}{15} c_1 c_2 + \frac{2 \cdot 13 \cdot 47 \lambda_1^4}{3^5 \cdot 25} c_1 c_2 + \frac{1249 \lambda_1^4}{3^6 \cdot 25} c_1^3
 - \frac{11 \lambda _4 \lambda _1}{45} c_1^2 - \frac{8 \lambda_4 \lambda_1}{21} c_2
 + \frac{2^2 \cdot 11^2 \cdot 13 \lambda_1^5}{3^6 \cdot 25 \cdot 7} c_2\\
\hphantom{0=}{}
 + \frac{2^8 \lambda _1^5}{3^5 \cdot 25} c_1^2
 + \frac{10 \lambda_6}{21} c_1
 - \frac{2^2 \cdot 11 \cdot 13 \lambda_4 \lambda_1^2}{3^3 \cdot 25 \cdot 7} c_1 + \frac{2^2 \cdot 17 \cdot 37 \cdot 43 \lambda _1^6}{3^8 \cdot 5^3 \cdot 7} c_1 + \frac{\lambda _7}{21}
 + \frac{5 \lambda_6 \lambda_1}{21}\\
\hphantom{0=}{}
 - \frac{2\cdot 167 \lambda_4 \lambda_1^3}{3^5 \cdot 35}
 + \frac{23011 \lambda_1^7}{3^8 \cdot 5^3 \cdot 7} + \left(c_2 + 2c_1^2 + \frac{5 \lambda_1}{3} c_1
 + \frac{3 \lambda_1^2}{5} \right) \wp_{1,4} - \left(\frac{c_2^2-c_4}{2} + 2 c_2 c_1^2\right.\\
\left.\hphantom{0=}{}
 + \frac{2}{3} c_1^4 + \frac{7 \lambda_1}{3} c_2 c_1 + \frac{10 \lambda_1}{9} c_1^3 + \frac{79 \lambda_1^2}{90} c_2
 + \frac{31 \lambda_1^2}{30} c_1^2 + \frac{509 \lambda _1^3}{3^4 \cdot 10} c_1
 - \frac{\lambda _4}{6} + \frac{2^5\cdot 11 \lambda_1^4}{3^4 \cdot 25} \right) \wp_{1,2}\\
\hphantom{0=}{}
 - \left(\big(c_2^2 - c_4\big) c_1 + \frac{4}{3} c_2 c_1^3 + \frac{4}{15} c_1^5 - \frac{11 \lambda _1}{30} c_4 + \frac{\lambda _1}{2} c_2^2 + \frac{4}{3}\lambda_1 c_2 c_1^2 - \frac{\lambda_4}{5} c_1
 + \frac{2 \lambda_1}{9} c_1^4\right.\\
\left.\hphantom{0=}{}
+ \frac{34 \lambda _1^2}{45} c_2 c_1+ \frac{11 \lambda_1^2}{45} c_1^3 + \frac{11\cdot 17\lambda _1^3}{3^4 \cdot 10} c_2 + \frac{7\cdot 29 \lambda _1^3}{3^4\cdot 10} c_1^2 + \frac{7^2\cdot 31 \lambda _1^4}{3^5 \cdot 50} c_1
 + \frac{61 \lambda_1^5}{3^4\cdot 25} - \frac{\lambda _1 \lambda _4}{18}
 \right) \wp_{1,1} \\
\hphantom{0=}{} + \frac{1}{15} \left(\wp_{1,1,1,4} - \frac{3}{2}\wp_{1,1,1}\wp_{1,1,2}
 + 6 \wp_{1,2}\wp_{1,1}^2 - 9 \wp_{1,1}\wp_{1,4} - \frac{3}{2} \wp_{1,2}\wp_{2,2}\right. \\
\left.\hphantom{0=}{}
+ \frac{3}{2} \lambda_1 \big(\wp_{1,2}^2 - \wp_{1,1}\wp_{2,2}\big) +\lambda_1 \wp_{2,4} - \lambda_1^2 \wp_{1,4}
 + \frac{3}{2} \lambda_1^2 \wp_{1,1}\wp_{1,2} - 2 \lambda_7\right)\\
\hphantom{0=}{} + \frac{1}{90} (6c_1 + \lambda_1) \big(4\wp_{1,1}^2 - \wp_{1,1,1}^2 + \wp_{1,2}^2
 - 4 \wp_{1,1} \wp_{2,2} + 4 \wp_{2,4} + 2 \lambda_1 \wp_{1,1} \wp_{1,2} + \lambda_1^2 \wp_{1,1}^2 \\
\hphantom{0=}{} + 4 \lambda_4 \wp_{1,1} - 4\lambda_6 \big) +
 \frac{1}{60} \big(\wp_{1,1,1,4} - \wp_{1,2,2,2} + 2 \wp_{1,1} \wp_{1,1,1,2} - 2 \wp_{1,2} \wp_{1,1,1,1} + \lambda_1 \wp_{1,1,2,2}\big)\\
\hphantom{0=}{}
 + \frac{7}{90} (3 c_1 + 2\lambda_1) \big(4 \wp_{1,2}^2 + 2 \wp_{1,1} \wp_{2,2} - \wp_{1,1,2,2} + \wp_{2,4}
 - \lambda_1 \wp_{1,4} \big) \\
\hphantom{0=}{} + \left(\frac{c_1}{6} + \frac{c_1^2}{3} + \frac{7 \lambda_1}{30} c_1 + \frac{8 \lambda_1^2}{135} - \frac{1}{30} \wp_{1,1} \right)
 \big(6 \wp_{1,1}\wp_{1,2} - \wp_{1,1,1,2} - 3 \wp_{1,4} - \lambda_1 \wp_{2,2} + \lambda_1^2 \wp_{1,2} \big)\\
\hphantom{0=}{}
 - \frac{1}{5!} \left(20 c_2 c_1 + \frac{40}{3} c_1^3 + \frac{20 \lambda_1}{3} \big(c_2 + c_1^2\big) + \frac{8 \lambda_1^2}{3} c_1
 + \frac{88}{81} \lambda_1^3 - (12 c_1 + 2\lambda_1)\wp_{1,1} - 14 \wp_{1,2} \right.\\
\left. \hphantom{0=}{} + \partial_{u_1}\partial_{u_2}+ \frac{1}{9}(6c_1 +\lambda_1)
 \partial_{u_1}^2 \right) \big(6 \wp_{1,1}^2 - \wp_{1,1,1,1} - 3\wp_{2,2} + 2 \lambda_1 \wp_{1,2} \\
\hphantom{0=}{} + \lambda_1^2 \wp_{1,1} + 2\lambda_4\big) \bigg)\xi^{-1} + O(1).
\end{gather*}

\section[Regularization of second kind integral on $(3,7)$-curve]{Regularization of second kind integral on $\boldsymbol{(3,7)}$-curve}\label{A:Reg37}
Given $(3,7)$-curve the formula \eqref{ZetaDef} acquires the form
\begin{gather*}
 \sum_{i=1}^{g=6} \int_{(z_i,w_i)}^{(x_i,y_i)} \rmd r = \begin{pmatrix}
 -\zeta_1(u) \\
 -\zeta_2(u) + \wp_{1,1}(u)\\
 -\zeta_4(u) + \mathcal{P}_4(u) \\
 -\zeta_5(u) + \mathcal{P}_5(u) \\
 -\zeta_8(u) + \mathcal{P}_8(u)\\
 -\zeta_{11}(u) + \mathcal{P}_{11}(u) \end{pmatrix}
 - \begin{pmatrix}
 -\zeta_1(v) \\
 -\zeta_2(v) + \wp_{1,1}(v)\\
 -\zeta_4(v) + \mathcal{P}_4(v) \\
 -\zeta_5(v) + \mathcal{P}_5(v) \\
 -\zeta_8(v) + \mathcal{P}_8(v)\\
 -\zeta_{11}(v) + \mathcal{P}_{11}(v) \end{pmatrix},
\end{gather*}
where in notation \eqref{wpNot}
\begin{gather*}
\mathcal{P}_4(u) = - \wp _{1,1,2} - \wp_{1,1}^2 + \lambda_2 \wp_{1,1}, \\
\mathcal{P}_5(u) = - \tfrac{1}{2} \wp _{1,2,2} - \tfrac{1}{6} \big(3 \wp_{1,1} - \lambda_2\big)\wp_{1,1,1} - \tfrac{5}{2}\wp_{1,1} \wp_{1,2} + \tfrac{5}{2} \wp_{1,4} -\tfrac{5}{12} \lambda_2, \\
\mathcal{P}_8(u) = -\wp_{2,2,4} + \tfrac{1}{3} \lambda_2 \wp_{1,1,4} +\big(\wp_{2,2} + \tfrac{2}{3} \lambda_2 \wp_{1,1}\big)\wp _{1,1,2}\\
\hphantom{\mathcal{P}_8(u) =}{} + 2\big(\wp_{1,1}\wp_{1,2} - \wp_{1,4} - \tfrac{1}{3} \lambda_2 \wp_{1,2} + \tfrac{1}{3} \lambda_5\big) \wp _{1,1,1}
 + \wp_{1,1}^4 + 4 \wp_{1,2}^2 \wp_{1,1} - \tfrac{4}{3} \lambda_2 \wp_{1,1}^3 \\
\hphantom{\mathcal{P}_8(u) =}{} - 4 \wp_{1,2} \wp_{1,4}- 4 \wp_{1,1} \wp_{1,5} + \tfrac{1}{3} \lambda_2^2 \wp_{1,1}^2+ 2\lambda_6 \wp_{1,1} - \tfrac{13}{21} \lambda_8 + \tfrac{3}{14} \lambda_2 \lambda_6, \\
\mathcal{P}_{11}(u) = - \tfrac{3}{8} \wp_{1,1,2} \wp_{1,1,1,4} - \tfrac{1}{4} \wp_{1,2} \wp_{2,2,4}
+ \big(\tfrac{5}{4} \wp_{2,2} + \lambda_2 \wp_{1,1} - \tfrac{4}{7}\lambda_2^2\big) \wp_{1,2,4}\\
\hphantom{\mathcal{P}_{11}(u) =}{} + \big(\tfrac{7}{2} \wp_{1,1} \wp_{1,2}- \tfrac{7}{4} \wp_{1,4} - \lambda_2 \wp_{1,2} + \tfrac{13}{12}\lambda_5 \big)\wp_{1,1,4} + \big( \tfrac{1}{8} \wp_{1,2}^2 - \tfrac{5}{4} \wp_{1,1} \wp_{2,2} - \lambda_2\wp_{1,1}^2 \\
\hphantom{\mathcal{P}_{11}(u) =}{}-\tfrac{1}{2} \wp_{1,5}+ \tfrac{3}{4} \wp_{2,4} + \tfrac{3}{7}\lambda_2^2\wp_{1,1} \big) \wp_{1,2,2} + \big({-} \tfrac{5}{4} \wp_{1,1}^2\wp_{1,2} + \tfrac{23}{8} \wp_{1,4}\wp_{1,1}
 - \tfrac{13}{16} \wp_{1,2}\wp_{2,2} \\
\hphantom{\mathcal{P}_{11}(u) =}{}
 - \tfrac{3}{8} \lambda_2 \wp_{1,1}\wp_{1,2}
 + \tfrac{5}{8} \wp_{2,5} - \tfrac{1}{2} \lambda_2 \wp_{1,4} + \tfrac{1}{7} \lambda_2^2\wp_{1,2} + \tfrac{1}{24} \lambda_5 \wp_{1,1}\big) \wp_{1,1,2} \\
\hphantom{\mathcal{P}_{11}(u) =}{}
+ \big({-} \tfrac{1}{2} \wp_{1,1}^4 - \tfrac{1}{4} \wp_{1,1}^2 \wp_{2,2} - \tfrac{7}{2} \wp_{1,1} \wp_{1,2}^2
 + \tfrac{2}{3} \lambda_2 \wp_{1,1}^3 + \tfrac{1}{4} \wp_{1,1} \big(10\wp_{1,5} + \wp_{2,4}\big)\\
\hphantom{\mathcal{P}_{11}(u) =}{}+ \tfrac{17}{8} \wp_{1,2} \wp_{1,4}+ \tfrac{7}{16} \wp_{2,2}^2 + \tfrac{7}{12} \lambda_2 \wp_{1,1}\wp_{2,2}
 + \tfrac{19}{24} \lambda_2 \wp_{1,2}^2 - \tfrac{1}{42} \lambda_2^2 \wp_{1,1}^2\\
\hphantom{\mathcal{P}_{11}(u) =}{} -\tfrac{1}{12} \lambda_2 (11\wp_{1,5} + \wp_{2,4}) - \tfrac{1}{7} \lambda_2^2 \wp_{2,2} -\tfrac{19}{24} \lambda_5 \wp_{1,2} -\tfrac{5}{4} \lambda_6 \wp_{1,1}
 - \tfrac{1}{7} \lambda_2^3 \wp_{1,1} +\tfrac{11}{12} \lambda_8 \big)\wp_{1,1,1}\\
\hphantom{\mathcal{P}_{11}(u) =}{} - \tfrac{11}{2} \big(\wp_{1,1}^4 \wp_{1,2}- \wp_{1,1}^3 \big(\wp_{1,4} + \lambda_2 \wp_{1,2}\big) + \wp_{1,2}^3 \wp_{1,1}- \wp_{1,2}^2 \wp_{1,4}\\
\hphantom{\mathcal{P}_{11}(u) =}{}- \wp_{1,1} \big(2 \wp_{1,2} \wp_{1,5} - \wp_{1,8}\big)
 + \lambda_2 \wp_{1,1}^2 \wp_{1,4} + \tfrac{1}{3} \lambda_5 \wp_{1,1}^3 + \wp_{1,4} \wp_{1,5} \big)\\
 \hphantom{\mathcal{P}_{11}(u) =}{}
 - \big(\tfrac{4}{7} \lambda_2^3 + \tfrac{7}{2} \lambda_6\big) \wp_{1,1} \wp_{1,2} + \tfrac{5}{6} \lambda_2 \lambda_5 \wp_{1,1}^2 + \big(\tfrac{4}{7} \lambda_2^3 + \tfrac{7}{2} \lambda_6\big) \wp_{1,4}\\
 \hphantom{\mathcal{P}_{11}(u) =}{}
 + \big(\tfrac{3}{7} \lambda_2^2 \lambda_5 + \tfrac{5}{2} \big) \lambda_9 \wp_{1,1}
 - \tfrac{2^7}{7!} \lambda_2^3 \lambda_5 + \tfrac{3}{20} \lambda_5 \lambda_6 + \tfrac{2^5\cdot 33}{7!} \lambda_2 \lambda_9 - \tfrac{33}{35} \lambda_{11}.
\end{gather*}
Here and in what follows the argument of Abelian functions is omitted for the sake of brevity.

\begin{Lemma}\label{L:ind4WPC37}
The following relations between Abelian functions on Jacobian of $\mathcal{V}_{(3,7)}$ hold
\begin{gather*}
 \wp_{1,1,1,1} = 6 \wp_{1,1}^2 - 3\wp_{2,2} - 4 \lambda_2 \wp_{1,1}, \\
 \wp_{1,1,1,2} = 6\wp_{1,1}\wp_{1,2} - 3 \wp_{1,4} - \lambda_2 \wp_{1,2} + \lambda_5, \\
 \wp_{1,1,2,2} = 4\wp_{1,2}^2 + 2\wp_{1,1}\wp_{2,2} - 4\wp_{1,5} + \wp_{2,4} - \lambda_2 \wp_{2,2} + 2\lambda_6, \\
 \wp_{1,1,2,4} = 4\wp_{1,2}\wp_{1,4} + 2\wp_{1,1}\wp_{2,4} + \wp_{4,4} - \lambda_2 \wp_{2,4}, \\
 \wp_{1,2,2,4} - \tfrac{1}{3}\lambda_2\wp_{1,1,1,4} = 4 \wp_{1,2} \wp_{2,4} + 2 \wp_{2,2} \wp_{1,4} - 2 \lambda_2 \wp_{1,1} \wp_{1,4} - 3\wp_{1,8} + 2\wp_{4,5}\\
 \qquad {} + \tfrac{4}{3} \lambda_2^2 \wp_{1,4} - 2\lambda_9, \\
 \wp_{2,2,2} = \wp_{1,1,4} + 2 \wp_{1,1}\wp_{1,1,2} - 2 \wp_{1,2}\wp_{1,1,1} - \lambda_2 \wp_{1,1,2}, \\
 \wp_{1,1,1}^2 = 4\wp_{1,1}^3 - 4\wp_{1,1}\wp_{2,2} + \wp_{1,2}^2 - 4\lambda_2 \wp_{1,1}^2 - 4\wp_{1,5} + 4\wp_{2,4}, \\
 \wp_{1,1,1}\wp_{1,1,2} - \tfrac{2}{3}\wp_{1,1,1,4} = 4 \wp_{1,1}^2 \wp_{1,2} - 6 \wp_{1,1}\wp_{1,4} - \wp_{1,2}\wp_{2,2} - \lambda_2 \wp_{1,1}\wp_{1,2} + 2\wp_{2,5} \\
 \qquad{} + \tfrac{8}{3} \lambda_2 \wp_{1,4} + 2\lambda_5 \wp_{1,1}, \\
 \wp_{1,1,1}\wp_{1,2,2} + \tfrac{1}{2} \wp_{1,1,2}^2 = 4 \wp_{1,1} \wp_{1,2}^2 + 2 \wp_{1,1}^2 \wp_{2,2} -4\wp_{1,1} \wp_{1,5} + 2\wp_{1,1} \wp_{2,4} - \wp_{1,2} \wp_{1,4} - \tfrac{3}{2} \wp_{2,2}^2 \\
 \qquad{} - \lambda_2 \big(2 \wp_{1,1} \wp_{2,2} + \wp_{1,2}^2\big) + 2\wp_{4,4} + 2\lambda_2 \big(\wp_{1,5} - \wp_{2,4}\big) + \lambda_5 \wp_{1,2} + 2\lambda_6 \wp_{1,1} - 2\lambda_8, \\
 \wp_{1,1,2}\wp_{1,2,2} = 2\wp_{1,1}\wp_{1,2}\wp_{2,2} + 2 \wp_{1,2}^3 - 4\wp_{1,2} \wp_{1,5} + 2 \wp_{1,2} \wp_{2,4} - \wp_{2,2} \wp_{1,4} - \lambda_2 \wp_{1,2}\wp_{2,2} \\
 \qquad {} + 2 \wp_{4,5} + \lambda_5 \wp_{2,2} + 2\lambda_6 \wp_{1,2} - 4\lambda_9, \\
 \wp_{1,1,1}\wp_{1,1,4} - \tfrac{2}{3} \wp_{1,1} \wp_{1,1,1,4} = \wp _{1,2} \wp _{2,4} - 2\wp_{2,2} \wp_{1,4} - \tfrac{4}{3} \lambda_2 \wp _{1,1} \wp_{1,4} - 2 \wp_{8,1}, \\
 \wp_{1,1,1}\wp_{1,2,4} + \wp_{1,1,2}\wp_{1,1,4} - \tfrac{1}{3} \wp_{1,2} \wp_{1,1,1,4}
 = 2\wp_{1,1}^2 \wp_{2,4} + 4 \wp_{1,1} \wp_{1,2} \wp_{1,4} + 2\wp_{1,1} \wp_{4,4} \\
 \qquad {} - \wp_{2,2} \wp_{2,4} - 2 \wp_{1,4}^2 - \tfrac{2}{3} \lambda_2 \big(3\wp_{1,1} \wp_{2,4} + \wp_{1,2} \wp_{1,4}\big) + 2\wp_{8,2} + 2\lambda_5 \wp_{1,4},\\
 \wp_{1,1,2}\wp_{1,1,4} + \tfrac{1}{2} \wp_{1,2,2}^2 + \tfrac{1}{2} \big(\wp_{1,1}-\lambda_2\big) \wp_{1,1,2}^2 - \tfrac{4}{3} \wp_{1,2} \wp_{1,1,1,4}\\
 \qquad{} = 2\big(\wp_{1,1}-\lambda_2\big) \wp_{1,1} \wp_{1,2}^2 - 6 \wp_{1,1} \wp_{1,2} \wp_{1,4} + 2 \wp_{1,2}^2 \wp_{2,2} +\tfrac{1}{2} \wp_{1,1} \wp_{2,2}^2 + 2 \wp_{1,2} \wp_{2,5} \\
 \qquad {} - 2 \wp_{2,2} \wp_{1,5} + \wp_{2,2} \wp_{2,4} - \tfrac{3}{2} \wp_{1,4}^2 + \tfrac{13}{2} \lambda_2 \wp_{1,2} \wp_{1,4} - \tfrac{1}{2}\lambda_2 \wp_{2,2}^2
 + \tfrac{1}{2} \lambda_2^2 \wp_{1,2}^2 + 2\lambda_5 \wp_{1,1} \wp_{1,2}\\
 \qquad {} - 2 \wp_{2,8} + 2 \wp_{5,5} + \lambda_5 \wp_{1,4} + 2 \lambda_6 \wp_{2,2} - \lambda_2 \lambda_5 \wp_{1,2} + \tfrac{4}{3} \lambda_2 \lambda_8 + \tfrac{1}{2} \lambda_5^2, \\
 \wp_{1,2,2} \wp_{1,1,4} + \wp_{1,1,2}\wp_{1,2,4} + \wp_{1,1,1}\wp_{2,2,4} - \big(\wp_{2,2} + \tfrac{2}{3} \lambda_2 \wp_{1,1}\big) \wp_{1,1,1,4}\\
 \qquad{} = 2 \wp_{1,1} \big(3\wp_{1,2} \wp_{2,4} - \wp_{2,2} \wp_{1,4} \big) + 2\wp_{1,2}^2 \wp_{1,4} - 4 \lambda_2 \wp_{1,1}^2 \wp_{1,4} - 6 \wp_{1,1} \wp_{1,8} + 2 \wp_{1,1} \wp_{4,5}\\
 \qquad {} + 2\wp_{1,4} \wp_{1,5} + \wp_{1,2} \wp_{4,4} - 5 \wp_{1,4} \wp_{2,4} - 2\lambda_2 \big(\wp_{1,2} \wp_{2,4} - \wp_{2,2} \wp_{1,4} \big)
 +\tfrac{8}{3} \lambda_2^2 \wp_{1,1} \wp_{1,4}\\
 \qquad {} + 2\lambda_2 \wp_{1,8} + \lambda_5 \wp_{2,4} + 2\lambda_6 \wp_{1,4} - 2\lambda_9 \wp_{1,1}- 2 \lambda_{11}, \\
 \wp_{1,1,4}^2 - \tfrac{4}{3} \wp _{1,4} \wp _{1,1,1,4} =
 - 4 \wp _{1,1} \wp_{1,4}^2 + \wp_{2,4}^2 + \tfrac{4}{3}\lambda_2 \wp _{1,4}^2 - 4 \wp_{1,11}, \\
 \wp_{1,1,4}\wp_{1,2,4} - \tfrac{1}{3} \wp_{2,4} \wp _{1,1,1,4} = 2\wp_{1,2} \wp_{1,4}^2 +
 2\wp_{1,4} \wp_{4,4} - \tfrac{2}{3}\lambda_2 \wp_{1,4}\wp_{2,4} + 2\wp_{2,11}.
\end{gather*}
\end{Lemma}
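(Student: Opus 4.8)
The plan is to follow, almost verbatim, the strategy of Lemmas~\ref{L:ind4WPC34} and~\ref{L:ind4WPC35}, now at genus $g=6$. The starting point is the residue identity~\eqref{ZetaDef} specialized to $\mathcal{V}_{(3,7)}$, together with the functions $\mathcal{P}_4,\mathcal{P}_5,\mathcal{P}_8,\mathcal{P}_{11}$ recorded at the head of Appendix~\ref{A:Reg37}; since that identity is written as a difference of second kind integrals over two divisors, the regularization constant $c_{(3,7)}(\lambda)$ cancels and is not needed here. First I would differentiate the identity with respect to $x_1$, using the basis differentials~\eqref{Diffs37}. Because $\rmd r(x_1,y_1,\lambda)/\rmd x_1$ and $\rmd u(x_1,y_1,\lambda)/\rmd x_1$ are explicit rational functions, this yields $g=6$ rational functions $\mathcal{R}_k(x,y)$ on $\mathcal{V}_{(3,7)}$, each vanishing at $(x_1,y_1)$ and, by symmetry of the construction, at all six divisor points $(x_i,y_i)$.

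The second step clears the poles at infinity. Forming suitable linear combinations $\varphi_k$ of the $\mathcal{R}_k$ (just as $\varphi_{10}$ and $\varphi_{11}$ were built in the lower genera) I would reduce each to the shape $\varphi(x,y)=\big(yx,\,x^3,\,y,\,x^2,\,x,\,1\big)\,\alpha(u)$, a linear form in the six numerators of $\rmd u$ from~\eqref{Diffs37}. The equations $\varphi(x_i,y_i)=0$, $i=1,\dots,6$, then read $M\alpha(u)=0$, where $M$ is the $6\times6$ matrix of those monomials evaluated on the divisor; for pairwise distinct points this generalized Vandermonde determinant is nonzero, equivalently $\sigma(u)\neq0$, so $\alpha(u)=0$. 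Splitting $\alpha(u)$ into even and odd parts in $u$ produces a first family of relations $\mathcal{T}_i$ among the $\wp$'s.

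To reach the relations quadratic in three-index functions I would reproduce the factorization argument of the $(3,5)$ case: the lowest even function and the two functions obtained from the next relation and its image under $u\mapsto-u$ (the $(3,7)$ counterparts of $\phi_8,\phi_9,\phi_{9'}$) have complementary zero divisors, so their ratio is regular off infinity and admits a polynomial decomposition modulo $f_{(3,7)}$. The compatibility of the resulting overdetermined linear system in the decomposition coefficients supplies the remaining quadratic relations such as $\wp_{1,1,1}^2=\cdots$. Each listed relation is then extracted by a chosen combination of the $\partial_{u_j}\mathcal{T}_i$ and the quadratic relations: exactly as in the proof of Lemma~\ref{L:ind4WPC34}, where $\partial_{u_2}\mathcal{T}_1-\partial_{u_1}\mathcal{T}_2+15\partial_{u_2}\mathcal{T}_4$ was shown to equal $30\wp_{1,1,2}$ times relation~\eqref{wp1111RelC34}, one arranges for the combination to factor as a known non-vanishing $\wp$ times the target relation, which is then divided out.

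The main obstacle is the scale of the bookkeeping rather than any new idea. At genus $6$ one must produce the full list of sixteen relations, the weights climb to $\wgt\wp_{2,11}=13$, and products of three- and four-index functions proliferate, so selecting the right $\varphi_k$ and the right derivative combinations to close each factorization is delicate. A structural point to respect is that several four-index functions are not individually determined by this method: they enter only through the combinations displayed, for instance $\wp_{1,2,2,4}-\tfrac{1}{3}\lambda_2\wp_{1,1,1,4}$ and $\wp_{1,1,1}\wp_{1,1,2}-\tfrac{2}{3}\wp_{1,1,1,4}$. One therefore tracks which linear combinations the construction actually isolates, instead of attempting to solve for each symbol separately.
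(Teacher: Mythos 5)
Your proposal is correct and follows essentially the same route as the paper, which itself proves Lemma~\ref{L:ind4WPC37} only by the remark that ``the proof is similar to the proofs of Lemmas~\ref{L:ind4WPC34} and~\ref{L:ind4WPC35}'': differentiate the residue identity \eqref{ZetaDef} on two divisors (so $c_{(3,7)}$ cancels), obtain the six rational functions vanishing on the divisor, reduce to $\big(yx,x^3,y,x^2,x,1\big)\alpha(u)=0$ and kill $\alpha(u)$ via the nondegenerate $6\times 6$ monomial matrix, split into even and odd parts, and supplement with the quadratic relations coming from the $\phi_{2g}$, $\phi_{2g+1}$, $\phi_{2g+1}'$ decomposition before extracting each relation from derivative combinations that factor through a non-vanishing $\wp$. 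Your structural remark that some four-index functions appear only in fixed combinations (e.g., $\wp_{1,2,2,4}-\tfrac{1}{3}\lambda_2\wp_{1,1,1,4}$) is consistent with how the lemma is stated and is exactly the right bookkeeping discipline.
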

The proof is similar to the proofs of Lemmas~\ref{L:ind4WPC34} and \ref{L:ind4WPC35}.

\section[Regularization of second kind integral on $(4,5)$-curve]{Regularization of second kind integral on $\boldsymbol{(4,5)}$-curve}\label{A:Reg45}
In the case of $(4,5)$-curve, the formula \eqref{ZetaDef} gets the form (notation \eqref{wpNot} is used)
\begin{gather*}
 \sum_{i=1}^{g=6} \int_{(z_i,w_i)}^{(x_i,y_i)} \rmd r = \begin{pmatrix}
 -\zeta_1(u) \\
 -\zeta_2(u) + \wp_{1,1}(u)\\
 -\zeta_3(u) + \mathcal{P}_3(u) \\
 -\zeta_6(u) + \mathcal{P}_6(u) \\
 -\zeta_7(u) + \mathcal{P}_7(u)\\
 -\zeta_{11}(u) + \mathcal{P}_{11}(u) \end{pmatrix}
 - \begin{pmatrix}
 -\zeta_1(v) \\
 -\zeta_2(v) + \wp_{1,1}(v)\\
 -\zeta_3(v) + \mathcal{P}_3(v) \\
 -\zeta_4(v) + \mathcal{P}_6(v) \\
 -\zeta_7(v) + \mathcal{P}_7(v)\\
 -\zeta_{11}(v) + \mathcal{P}_{11}(v) \end{pmatrix},
\end{gather*}
where
\begin{gather*}
\mathcal{P}_3(u) = \tfrac{1}{2} \wp _{1,1,1} + \tfrac{3}{2} \wp_{1,2}, \\
\mathcal{P}_6(u) = - \wp_{1,2,3} - \big(\wp _{1,1}- \tfrac{1}{2} \lambda_2 \big) \wp_{1,1,2} - \tfrac{1}{2} \big(\wp _{1,2}- \lambda_3\big) \wp_{1,1,1} - \wp_{1,1}^3 - 3 \wp_{1,1} \wp_{1,3} - \tfrac{3}{2} \wp_{1,2}^2\\
 \hphantom{\mathcal{P}_6(u) =}{} + \tfrac{3}{2}\lambda_2 \wp_{1,1}^2 + \tfrac{3}{2} \lambda_3 \wp_{1,2} - \tfrac{1}{2} \lambda_2^2 \wp_{1,1} - \tfrac{7}{10}\lambda_6, \\
\mathcal{P}_7(u) = -\tfrac{1}{2}\wp_{1,3,3} - \big(\wp_{1,1}- \tfrac{3}{5}\lambda_2\big) \big(\wp _{1,1,3} + \tfrac{1}{2}\wp _{1,2,2} \big)- \big(\wp_{1,2} - \tfrac{1}{4} \lambda_3\big)\wp _{1,1,2}- \tfrac{1}{2}\big(\wp_{1,1}^2 + \wp_{1,3} \\
\hphantom{\mathcal{P}_7(u) =}{} - \tfrac{8}{5}\lambda_2\wp_{1,1} + \tfrac{3}{5}\lambda_2^2\big) \wp _{1,1,1}- \tfrac{7}{2} \big(\wp_{1,1}^2 \big(\wp_{1,2} - \tfrac{1}{2} \lambda_3\big) + \wp_{1,2} \big(\wp_{1,3}-\lambda_2\wp_{1,1}\big) - \wp_{1,6} \big) \\
\hphantom{\mathcal{P}_7(u) =}{}- \tfrac{3}{5} \lambda_2^2 \wp_{1,2}- \tfrac{23}{20} \lambda_2 \lambda_3 \wp_{1,1} - \tfrac{2\cdot 7^2}{5!} \lambda_7,\\
\mathcal{P}_{11}(u) = \tfrac{3}{2} \wp_{1,1,1} \wp _{3,3,1,1} + \big(\tfrac{1}{3} \wp_{1,2,2} - \tfrac{1}{6}\wp_{1,1,3} - \big(\tfrac{3}{4}\wp_{1,1} + \tfrac{5}{9}\lambda_2\big) \wp_{1,1,1} \big) \wp _{3,1,1,1}\\
\hphantom{\mathcal{P}_{11}(u) =}{} + \big(\tfrac{5}{2}\wp_{1,1}^2 + \tfrac{1}{2} \wp_{3,1} - \tfrac{1}{2} \wp_{2,2} - \tfrac{5^2}{18}\lambda_2 \wp_{1,1}\big) \wp_{3,3,1}
 + \big(\tfrac{5}{2} \wp_{1,1} \wp_{1,2} + \tfrac{1}{2} \wp_{3,2} - \tfrac{1}{3} \lambda_2 \wp_{2,1} \big) \wp_{3,2,1}\\
\hphantom{\mathcal{P}_{11}(u) =}{}
 + \big(4\wp_{1,1}^3 + 4 \wp_{1,1} \wp_{1,3} - \tfrac{3}{4} \wp_{1,2}^2 - 2 \wp_{1,1} \wp_{2,2} - \tfrac{17}{3} \lambda_2 \wp_{1,1}^2 + \tfrac{2}{3} \wp_{3,3}\\
\hphantom{\mathcal{P}_{11}(u) =}{}
 - \tfrac{1}{9}\lambda_2 \big(11 \wp_{1,3} - 7 \wp_{2,2}\big) + \lambda_3 \wp_{1,2}
 + \tfrac{5}{3} \lambda_2^2 \wp_{1,1} - \tfrac{3}{10} \lambda_3^2 - \tfrac{1}{5} \lambda_6\big) \wp_{1,1,3} \\
\hphantom{\mathcal{P}_{11}(u) =}{}
 - \big(\wp_{1,1}^3 + \wp_{1,1} \big(\tfrac{3}{2} \wp_{1,3} - \wp_{2,2}\big) - \tfrac{7}{8} \wp_{1,2}^2
 - \tfrac{23}{18} \lambda_2 \wp_{1,1}^2 - \tfrac{1}{6} \wp_{3,3}
 - \tfrac{1}{9} \lambda_2 \big(\wp_{1,3} - \tfrac{5}{2} \wp_{2,2}\big) \\
\hphantom{\mathcal{P}_{11}(u) =}{} + \tfrac{1}{4} \lambda_3 \wp_{1,2} + \tfrac{5}{18} \lambda_2^2 \wp_{1,1} + \tfrac{3}{20} \lambda_3^2 - \tfrac{2}{5} \lambda_6\big) \wp_{1,2,2}
 + \big(\tfrac{1}{2} \wp_{1,1}^2 \wp_{1,2} + \tfrac{3}{2} \wp_{1,1} \wp_{2,3} \\
\hphantom{\mathcal{P}_{11}(u) =}{} + \tfrac{5}{4} \wp_{1,2} \big(2 \wp_{1,3} - 5 \wp_{2,2} \big) - \tfrac{7}{3} \lambda_2 \wp_{1,1} \wp_{1,2} + \tfrac{1}{2} \lambda_3 \wp_{1,1}^2 - \tfrac{2}{9} \lambda_2 \wp_{2,3}
 + \tfrac{1}{4} \lambda_3 \big(\wp_{2,2} - 4\wp_{1,3}\big) \\
\hphantom{\mathcal{P}_{11}(u) =}{} + \tfrac{5}{9} \lambda_2^2 \wp_{1,2} + \tfrac{7}{9} \lambda_2 \lambda_3 \wp_{1,1} + \tfrac{3}{4} \lambda_7 \big) \wp_{1,1,2} + \big(\tfrac{1}{2} \wp_{1,1}^4 + \tfrac{3}{4} \wp_{1,1}^2 \big(\wp_{2,2} + 4\wp_{1,3}\big) + \tfrac{19}{4} \wp_{1,1} \wp_{1,2}^2\\
\hphantom{\mathcal{P}_{11}(u) =}{} - \tfrac{23}{18} \lambda_2 \wp_{1,1}^3 - 4 \wp_{1,1} \wp_{3,3} - 3 \wp_{1,2} \wp_{2,3} + \tfrac{3}{4} \wp_{2,2} \wp_{1,3} - 6 \wp_{1,3}^2 + \tfrac{1}{2} \wp_{2,2}^2\\
\hphantom{\mathcal{P}_{11}(u) =}{}
 + \tfrac{1}{18} \lambda_2 \big(79 \wp_{1,3} - \tfrac{37}{2} \wp_{2,2}\big) \wp_{1,1}
 - \tfrac{23}{24} \lambda_2 \wp_{1,2}^2
 - \tfrac{37}{8} \lambda_3 \wp_{1,1} \wp_{1,2} + \tfrac{19}{18} \lambda_2^2 \wp_{1,1}^2 \\
\hphantom{\mathcal{P}_{11}(u) =}{}
 + \tfrac{5}{4} \big(\wp_{2,6} + 2 \wp_{1,7} \big) + \tfrac{5^2}{18} \lambda_2 \wp_{3,3} + \tfrac{3}{2} \lambda_3 \wp_{2,3} + \tfrac{5}{18} \lambda_2^2 \big(\wp_{2,2} - 4 \wp_{1,3}\big)
 + \tfrac{5}{6} \lambda_2 \lambda_3 \wp_{1,2} \\
\hphantom{\mathcal{P}_{11}(u) =}{}
 + \big(\tfrac{7^2}{10} \lambda_6 - \tfrac{3}{20} \lambda_3^2 - \tfrac{5}{18} \lambda_2^3\big) \wp_{1,1}
 + \tfrac{3}{20} \lambda_2 \lambda_3^2 + \tfrac{37}{30} \lambda_2 \lambda_6 - \tfrac{5}{2} \lambda_8\big) \wp_{1,1,1}\\
\hphantom{\mathcal{P}_{11}(u) =}{}
 + \tfrac{11}{2} \Big(\wp_{1,1}^4 \big(\wp_{1,2} - \tfrac{1}{2}\lambda_3\big) + \wp_{1,2}^3 \big(\wp_{1,1} - \tfrac{1}{3} \lambda_2) + \wp_{1,1}^2 \wp_{1,2} \big(3\wp_{1,3} - 2 \lambda_2 \wp_{1,1} \big) \\
\hphantom{\mathcal{P}_{11}(u) =}{}
 - \wp_{1,1}^2 \big(\wp_{1,6} + \lambda_3 \wp_{1,3} \big) + \wp_{1,2} \wp_{1,3}^2
 - 2 \lambda_2 \wp_{1,1} \wp_{1,2} \wp_{1,3} - \lambda_3 \wp_{1,1} \wp_{1,2}^2 - \wp_{1,2} \wp_{1,7}\\
\hphantom{\mathcal{P}_{11}(u) =}{}
- \big(\wp_{1,3} - \lambda_2 \wp_{1,1} \big) \wp_{1,6} + \lambda_6 \wp_{1,1} \wp_{1,2}
 + \tfrac{1}{2} \lambda_7 \wp_{1,1}^2\Big) + \tfrac{7\cdot 17}{18} \lambda_2^2 \wp_{1,1}^2 \wp_{1,2}
 + \tfrac{79}{18} \lambda_2\lambda_3 \wp_{1,1}^3 \\
\hphantom{\mathcal{P}_{11}(u) =}{}
 + \tfrac{10}{9} \lambda_2^2 \wp_{1,2} \wp_{1,3}
 + \tfrac{13}{12} \lambda_2 \lambda_3 \big(2 \wp_{1,1} \wp_{1,3} + \wp_{1,2}^2\big)
 - \tfrac{10}{9} \lambda_2^3 \wp_{1,1} \wp_{1,2} - \tfrac{59}{6^2} \lambda_2^2 \lambda_3 \wp_{1,1}^2\\
\hphantom{\mathcal{P}_{11}(u) =}{}
 - \tfrac{10}{9} \lambda_2^2 \wp_{1,6} + \big(\tfrac{3}{10} \lambda_2\lambda_3^2 - \tfrac{7^2}{30} \lambda_2 \lambda_6 + \tfrac{5}{2} \lambda_8 \big) \wp_{1,2}
 + \big(\tfrac{9}{20} \lambda_3^3 - \tfrac{17}{10} \lambda_3 \lambda_6 - \tfrac{59}{6^2} \lambda_2 \lambda_7\big) \wp_{1,1} \\
\hphantom{\mathcal{P}_{11}(u) =}{}
 - \tfrac{2\cdot 211}{7!} \lambda_2 \lambda_3 \lambda_6 + \tfrac{2\cdot 277}{3\cdot 7!} \lambda_2^2 \lambda_7 + \tfrac{22}{45} \lambda_3 \lambda_8 - \tfrac{11^2}{90}\lambda_{11}.
\end{gather*}
The argument of Abelian functions is omitted for brevity.

\begin{Lemma}\label{L:ind4WPC45}
The following relations between Abelian functions on Jacobian of $\mathcal{V}_{(4,5)}$ hold
\begin{gather*}
 \wp_{1,1,1,1} = 6 \wp_{1,1}^2 - 3\wp_{2,2} + 4\wp_{1,3} - 4 \lambda_2 \wp_{1,1}, \\
 \wp_{1,1,1,2} = 6\wp_{1,1}\wp_{1,2} - 2 \wp_{2,3} - \lambda_2 \wp_{1,2} - 3 \lambda_3 \wp_{1,1}, \\
 \wp_{1,1,2,2} + \tfrac{2}{3} \wp_{1,1,1,3} = 4\wp_{1,1}\wp_{1,3} + 2 \wp_{1,1} \wp_{2,2} + 4 \wp_{1,2}^2 - \tfrac{4}{3} \wp_{3,3} + \tfrac{4}{3} \lambda_2 \wp_{1,3} - \lambda_2 \wp_{2,2}\\
 \hphantom{\wp_{1,1,2,2} + \tfrac{2}{3} \wp_{1,1,1,3} =}{} - 3\lambda_3 \wp_{1,2} + 2\lambda_6, \\
 \wp_{1,1,2,3} = 4\wp_{1,2}\wp_{1,3} + 2\wp_{1,1}\wp_{2,3} - 2 \wp_{1,6} - \lambda_2 \wp_{2,3} - \lambda_3 \wp_{1,3} + \lambda_7, \\
 \wp_{2,2,2} = 2\wp_{1,2,3} + (2 \wp_{1,1}- \lambda_2) \wp_{1,1,2} - (2 \wp_{1,2} - \lambda_3 ) \wp_{1,1,1}, \\
 \wp_{2,2,3} = \wp_{1,3,3} + (\wp_{1,1} - \lambda_2)\wp_{1,1,3} + \wp_{1,1} \wp_{1,2,2} -\tfrac{1}{2} \wp_{1,2}\wp_{1,1,2} -\tfrac{1}{2} (\wp_{2,2} + 2\wp_{1,3}) \wp_{1,1,1}, \\
 \wp_{2,3,3} = \wp_{1,1} \wp_{1,2,3} - \tfrac{1}{2} \wp_{1,2} \wp_{1,2,2} + (2\wp_{1,2}-\lambda_3) \wp_{1,1,3} - (\wp_{1,1}^2 + 3\wp_{1,3} - \wp_{2,2} - \lambda_2 \wp_{1,1})\wp_{1,1,2} \\
 \hphantom{\wp_{2,3,3} =}{} +\tfrac{1}{2} (2\wp_{1,1}\wp_{1,2} - 2\wp_{2,3} -\lambda_2 \wp_{1,2} - \lambda_3 \wp_{1,1})\wp_{1,1,1},\\
 \wp_{3,3,3} = -(\wp_{1,1} - \lambda_2)\wp_{1,3,3} +\tfrac{1}{2} (3\wp_{1,2}-2\lambda_3) \wp_{1,2,3} + \tfrac{1}{2} (4\wp_{1,1}^2-2\wp_{1,3} -\wp_{2,2}-2\lambda_2\wp_{1,1})\wp_{1,2,2} \\
 \hphantom{\wp_{3,3,3} =}{} - (3\wp_{1,1}^2+\wp_{1,3}-2\wp_{2,2}-2\lambda_2 \wp_{1,1})\wp_{1,1,3} - \tfrac{1}{2}(2\wp_{2,3}+\lambda_3 \wp_{1,1}) \wp_{1,1,2} + \tfrac{1}{6}(5\wp_{1,1,1,3} \\
\hphantom{\wp_{3,3,3} =}{} - 6\wp_{1,2}^2 - 6\wp_{1,1} \wp_{2,2} - 12 \wp_{1,1} \wp_{1,3} + 4\wp_{3,3} - 4\lambda_2 \wp_{1,3} + 3\lambda_2 \wp_{2,2} + 6\lambda_3 \wp_{1,2}\\
\hphantom{\wp_{3,3,3} =}{} - 6\lambda_6)\wp_{1,1,1},\\
 \wp_{1,1,1}^2 + \tfrac{4}{3} \wp_{1,1,1,3} = 4\wp_{1,1}^3 + 12 \wp_{1,1} \wp_{1,3}
 - 4 \wp_{1,1} \wp_{2,2} + \wp_{1,2}^2 + \tfrac{4}{3} \wp_{3,3} - 4\lambda_2 \wp_{1,1}^2 - \tfrac{4}{3}\lambda_2 \wp_{1,3},\\
 \wp_{1,1,1}\wp_{1,1,2} = 4 \wp_{1,1}^2 \wp_{1,2} - 2 \wp_{1,1} \wp_{2,3} - \wp_{1,2} \wp_{2,2} + 2 \wp_{1,2} \wp_{1,3}
 - 2\wp_{1,6} - 2 \lambda_2 \wp_{1,1} \wp_{1,2} - 2\lambda_3 \wp_{1,1}^2, \\
 \wp_{1,1,2}^2 - 2 \wp_{1,1,3,3} + \tfrac{4}{3} \lambda_2 \wp_{1,1,1,3} = 4 \wp_{1,1} \wp_{1,2}^2
 + \wp_{2,2}^2 - 8 \wp_{1,3}^2 - 4 \wp_{1,2} \wp_{2,3} - 4\wp_{1,1} \wp_{3,3} + 2\wp_{2,6} \\
 \qquad{} + 4\wp_{1,7} + 8 \lambda_2 \wp_{1,1} \wp_{1,3} + \tfrac{4}{3} \lambda_2 \wp_{3,3} - 4 \lambda_3 \wp_{1,1} \wp_{1,2}
 + 2\lambda_3 \wp_{2,3} - \tfrac{4}{3} \lambda_2^2 \wp_{1,3} + 4\lambda_6 \wp_{1,1} - 4\lambda_8,\\
 \wp_{1,1,1}\wp_{1,2,2} + 2 \wp_{1,1,3,3} + \tfrac{2}{3} (\wp_{1,1} - 2\lambda_2) \wp_{1,1,1,3} =
 2\wp_{1,1}\wp_{1,2}^2 + 2\wp_{1,1}^2 \wp_{2,2} + 4 \wp_{1,1}^2 \wp_{1,3} + 8\wp_{1,3}^2 \\
 \qquad{} + \tfrac{8}{3} \wp_{1,1} \wp_{3,3} + 2 \wp_{1,2} \wp_{2,3}
 - 2\wp_{2,2}^2 + 2 \wp_{1,3} \wp_{2,2} - \tfrac{1}{3} \lambda_2 \big(20 \wp_{1,1} \wp_{1,3}
 + 6 \wp_{1,1} \wp_{2,2} + 3 \wp_{1,2}^2 \big) \\
 \qquad{} - \lambda_3 \wp_{1,1} \wp_{1,2} - 8 \wp_{1,7} - \tfrac{4}{3} \lambda_2 \wp_{3,3} -2\lambda_3 \wp_{2,3}
 + \tfrac{4}{3} \lambda_2^2 \wp_{1,3} + 4 \lambda_8,\\
 \wp_{1,1,1}\wp_{1,1,3} + \wp_{1,1,3,3} - \tfrac{2}{3} \wp_{1,1} \wp_{1,1,1,3} =
 6 \wp_{1,3}^2 - 2 \wp_{1,3} \wp_{2,2} + \wp_{1,2} \wp_{2,3}
 + \tfrac{4}{3} \wp_{1,1} \wp_{3,3} + \wp_{2,6} \\
 \qquad{} - 4 \wp_{1,7} - \tfrac{4}{3} \lambda_2 \wp_{1,1}\wp_{1,3} - \lambda_3 \wp_{2,3}+2\lambda_8,\\
 \wp_{1,1,2}\wp_{1,2,2} + \wp_{1,1,1}\wp_{1,2,3} + \big(\wp_{1,2} - \tfrac{2}{3}\lambda_3\big) \wp_{1,1,1,3} =
 2\wp_{1,1}^2 \wp_{2,3} + 8 \wp_{1,1} \wp_{1,2} \wp_{1,3} \\
 \qquad{} + 2 \wp_{1,1} \wp_{1,2} \wp_{2,2} + 2 \wp_{1,2}^3 - 2 \wp_{1,1} \wp_{1,6} - 2 \wp_{2,2} \wp_{2,3} + 2 \wp_{1,3} \wp_{2,3}\\
 \qquad{} - \lambda_2 \big(2\wp_{1,1}\wp_{2,3} + \wp_{1,2} \wp_{2,2}\big)- \lambda_3 \big(4\wp_{1,1}\wp_{1,3} + \wp_{1,1} \wp_{2,2} + 2 \wp_{1,2}^2 \big) + 2\wp_{3,6} -\tfrac{2}{3} \lambda_3 \wp_{3,3}\\
 \qquad{}+\tfrac{2}{3}\lambda_2 \lambda_3 \wp_{1,3} + 2 \lambda_6 \wp_{1,2} + 2 \lambda_7 \wp_{1,1}, \\
 \wp_{1,1,2}\wp_{1,1,3} + \wp_{1,1,1}\wp_{1,2,3} - \tfrac{1}{3} \wp_{1,2} \wp_{1,1,1,3} =
 2 \wp_{1,1}^2 \wp_{2,3} + 4 \wp_{1,1} \wp_{1,2} \wp_{1,3} - 2\wp_{1,1} \wp_{1,6}\\
 \qquad{} + \tfrac{2}{3} \wp_{1,2} \wp_{3,3} - \wp_{2,2} \wp_{2,3} - \tfrac{2}{3} \lambda_2 \big(\wp_{1,2}\wp_{1,3} + 3 \wp_{1,1} \wp_{2,3}\big) - 2\lambda_3 \wp_{1,1} \wp_{1,3} + 2\wp_{2,7} + 2 \lambda_7 \wp_{1,1},\\
 \wp_{1,1,2}\wp_{1,2,3} + \tfrac{1}{2} \wp_{1,2,2}^2 - \tfrac{1}{2} \wp_{1,1,3}^2
 + (\wp_{1,1}-\lambda_2) \wp_{1,1,3,3} + \tfrac{1}{3} \big(2\wp_{1,3} + \wp_{2,2} - 2\lambda_2\wp_{1,1}\\
 \qquad{} + 2\lambda_2^2 \big) \wp_{1,1,1,3} = 2 \wp_{1,1}^2 \big(\wp_{3,3} - 2\lambda_2 \wp_{1,3} \big) + 2\wp_{1,2}^2 (\wp_{2,2}
 + \wp_{1,3}) + 6 \wp_{1,3}^2 \wp_{1,1}\\
 \qquad{}+ 2 \wp_{1,1} \wp_{2,2} \wp_{1,3} + 2 \wp_{1,1} \wp_{1,2} \wp_{2,3} - 4 \wp_{1,2} \wp_{1,6} - \tfrac{1}{2} \wp_{2,3}^2
 + \tfrac{2}{3} \big(\wp_{1,3} \wp_{3,3} - \wp_{2,2} \wp_{3,3}\big)\\
 \qquad{} - \wp_{1,1} \big(\wp_{2,6} + 2\wp_{1,7}\big) + \tfrac{2}{3} \lambda_2 \big(\wp_{1,3} \wp_{2,2} - 7 \wp_{1,3}^2 - 3 \wp_{1,2} \wp_{2,3} - 4 \wp_{1,1} \wp_{3,3}\big) \\
 \qquad{} - \lambda_3 \big(\wp_{1,1} \wp_{2,3} + 2\wp_{1,2} \wp_{2,2}\big)
 + \tfrac{1}{6} \lambda_2^2 \big(28 \wp_{1,1} \wp_{1,3} + 3 \wp_{1,2}^2 \big) - \lambda_2 \lambda_3 \wp_{1,1} \wp_{1,2}\\
\qquad{} + \tfrac{1}{2} \big( \lambda_3^2 - 4\lambda_6 \big) \wp_{1,1}^2 + 2\wp_{3,7}
 + \lambda_2 \big(2\wp_{1,7} + \wp_{2,6}\big) + 2\lambda_3 \wp_{1,6} + \tfrac{2}{3} \lambda_2^2 \wp_{3,3} + \lambda_2 \lambda_3 \wp_{2,3} \\
\qquad{} - \tfrac{2}{3} \big(3 \lambda_6 + \lambda_3^2\big) \wp_{1,3} + 2\lambda_6 \wp_{2,2}
 + 2\lambda_7 \wp_{1,2} + (\lambda_8 + 2\lambda_2 \lambda_6) \wp_{1,1} - 2 \lambda_2 \lambda_8 - 2 \lambda_{10},\\
 \wp_{1,2,2}\wp_{1,1,3} - \wp_{1,1,1}\wp_{1,3,3} - \tfrac{3}{2} \wp_{1,1,3}^2
 + (\wp_{1,1} - \lambda_2) \wp_{1,1,3,3} + \tfrac{2}{3} \big(3\wp_{1,3} - \wp_{2,2}\big) \wp_{1,1,1,3}\\
 \qquad{} = 8 \wp_{1,1} \wp_{1,3}^2 + 2 \wp_{1,2}^2 \wp_{1,3} - 2 \wp_{1,1} \wp_{2,2} \wp_{1,3}
 + \wp_{1,1} \wp_{2,6} - \wp_{1,2} \wp_{1,6} + \tfrac{4}{3} \wp_{2,2} \wp_{3,3} - 2 \wp_{1,3} \wp_{3,3}\\
 \qquad{} + \tfrac{1}{2} \wp_{2,3}^2 - \tfrac{1}{3} \lambda_2 \big(3 \wp_{1,2} \wp_{2,3}
 -2 \wp_{2,2} \wp_{1,3} + 12 \wp_{1,3}^2 \big) - \lambda_3 \wp_{1,2} \wp_{1,3} + 2 \wp_{3,7}\\
\qquad{} + \lambda_2 \big(2\wp_{1,7} - \wp_{2,6} \big) + \lambda_2 \lambda_3 \wp_{2,3} + \lambda_7 \wp_{1,2} - 2\lambda_2 \lambda_8,\\
 \wp_{1,2,2}\wp_{1,2,3} + \wp_{1,1,2}\wp_{1,3,3} - (\wp_{1,1} - \lambda_2) \wp_{1,1,1}\wp_{1,2,3}
 - \tfrac{1}{3} \big( \wp_{1,1} \wp_{1,2} - 2 \wp_{2,3} - \lambda_3 \wp_{1,1}\big) \wp_{1,1,1,3} \\
 \qquad{} = 2\wp_{1,1}^2 \big(\wp_{1,6} - 2 \wp_{1,2} \wp_{1,3} - \wp_{1,1} \wp_{2,3} + 2 \lambda_2 \wp_{2,3}
 + \lambda_3 \wp_{1,3}\big)\\
 \qquad{} + 2 \big(\wp_{1,2}\wp_{1,3} + \wp_{1,1} \wp_{2,3} \big) \big(\wp_{1,3} + \wp_{2,2} \big)
 + 2 \wp_{1,2}^2 \wp_{2,3} + \tfrac{2}{3} \wp_{1,1} \wp_{1,2} \big(\wp_{3,3} + 5\lambda_2 \wp_{1,3} \big)\\
 \qquad{} -2\wp_{1,1} \wp_{2,7} - 4 \wp_{1,2} \wp_{1,7} - \big(2\wp_{1,3} + \wp_{2,2}\big) \wp_{1,6} - \tfrac{4}{3} \wp_{2,3} \wp_{3,3} - 2\lambda_2 \wp_{1,1} \wp_{1,6}\\
 \qquad{} + \tfrac{2}{3} \lambda_2 \big(2\wp_{1,3} - 3\wp_{2,2}\big) \wp_{2,3}
 - \tfrac{1}{3} \lambda_3 \big(2 \wp_{1,1} \wp_{3,3} + 3 \wp_{2,2}\wp_{1,3}\ + 6 \wp_{1,2} \wp_{2,3}\big) - 2\lambda_2^2 \wp_{1,1} \wp_{2,3} \\
\qquad{} - \tfrac{4}{3} \lambda_2 \lambda_3 \wp_{1,1} \wp_{1,3}
 - 2 \lambda_7 \wp_{1,1}^2 + 2\lambda_2 \wp_{2,7} + 2\lambda_3 \wp_{1,7} + 2\lambda_6 \wp_{2,3}
 + \lambda_7 \wp_{2,2} + 2 \lambda_8 \wp_{1,2} \\
 \qquad{} + 2\lambda_2 \lambda_7 \wp_{1,1} - 2\lambda_{11}.
\end{gather*}
\end{Lemma}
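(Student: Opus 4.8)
The plan is to reproduce, for the tetragonal genus-$6$ curve, the argument already used for Lemmas~\ref{L:ind4WPC34} and~\ref{L:ind4WPC35}, with bookkeeping adapted to $n=4$. I would start from the explicit form of the residue identity~\eqref{ZetaDef} recorded in Appendix~\ref{A:Reg45}, in which $\sum_{i=1}^{6}\int_{(z_i,w_i)}^{(x_i,y_i)}\rmd r$ is written as the difference of the vector $\bigl(-\zeta_1,\,-\zeta_2+\wp_{1,1},\,-\zeta_3+\mathcal{P}_3,\,-\zeta_6+\mathcal{P}_6,\,-\zeta_7+\mathcal{P}_7,\,-\zeta_{11}+\mathcal{P}_{11}\bigr)^t$ evaluated at $u$ and at $v$, with the polynomials $\mathcal{P}_3,\mathcal{P}_6,\mathcal{P}_7,\mathcal{P}_{11}$ as given there. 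Differentiating this identity in $x_1$ and inserting the basis~\eqref{Diffs45} turns its six components into six rational identities $\mathcal{R}_{12}(x_1,y_1)=\mathcal{R}_{13}=\mathcal{R}_{14}=\mathcal{R}_{17}=\mathcal{R}_{18}=\mathcal{R}_{22}=0$, the subscripts being the pole orders at infinity of the leading monomials $x^3,\,yx^2,\,y^2x,\,yx^3,\,y^2x^2,\,y^2x^3$ of the components of $\rmd r$. By the symmetry of the construction each $\mathcal{R}_k$ vanishes at all six points $(x_i,y_i)$.

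I would then reduce each of the higher $\mathcal{R}_k$, $k\in\{14,17,18,22\}$, to the holomorphic span $(y^2,yx,x^2,y,x,1)$ of~\eqref{Diffs45} by subtracting suitable multiples of the lower $\mathcal{R}_j$ and of $f(x,y)$, obtaining in each case a function $\varphi(x,y)=(y^2,yx,x^2,y,x,1)\,\alpha(u)$ with $\alpha(u)$ a vector of Abelian functions. Because the six points are pairwise distinct and $\sigma(u)\neq0$, the matrix with rows $(y_i^2,y_ix_i,x_i^2,y_i,x_i,1)$ is nonsingular, so $\varphi(x_i,y_i)=0$ for all $i$ forces $\alpha(u)=0$; splitting each resulting identity into even and odd parts under $u\mapsto-u$ produces the first batch of relations $\mathcal{T}_1,\mathcal{T}_2,\dots$, exactly as in the two preceding proofs. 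A complementary batch comes from the even function $\phi_{12}(x,y;u)$ read off from $\mathcal{R}_{12}$, which carries the $2g=12$ zeros of the divisors of $u$ and $-u$, together with the function $\phi_{13}(x,y;u)$ from $\mathcal{R}_{13}$ (vanishing on the $u$-divisor) and its reflection $\phi_{13'}(x,y)=\phi_{13}(x,y;-u)$ (vanishing on the $-u$-divisor). The product $\phi_{13}\phi_{13'}$ vanishes on the full zero set of $\phi_{12}$, so $\phi_{13}\phi_{13'}/\phi_{12}$ has poles only at infinity and one obtains a decomposition $\phi_{13}\phi_{13'}-P(x,y)\phi_{12}+Q(x,y)f(x,y)=0$ with $P$ a combination of the monomials of pole order $\le14$ and $Q$ of those of order $\le6$; the compatibility of the resulting overdetermined linear system in the coefficients of $P$ and $Q$ yields the remaining relations.

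Finally I would assemble the list in the Lemma from these $\mathcal{T}_i$ by taking their $u$-derivatives and forming the linear combinations that factor as (a generically non-vanishing Abelian function) $\times$ (a desired relation)---pulling out a non-zero third derivative such as $\wp_{1,1,1}$ exactly as in the displayed cancellations of the $(3,4)$- and $(3,5)$-proofs---so as to solve first for $\wp_{1,1,1,1}$ and the three-index symmetric combinations, and then for the higher four-index and quadratic-in-$\wp$ relations.

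The step I expect to be the genuine obstacle is not conceptual but the scale and the ordering of this elimination. With $18$ target relations of weights up to $22$, and with the tetragonal structure forcing $y^2$- and $y^3$-terms into every function-field reduction, the decomposition $\phi_{13}\phi_{13'}=P\phi_{12}-Qf$ and its compatibility system are large, and the eliminations must be sequenced so that each $\wp$ appearing on a left-hand side is expressed only through quantities already solved for---for instance the chain $\wp_{2,2,2}\to\wp_{2,2,3}\to\wp_{2,3,3}\to\wp_{3,3,3}$ and the four-index relations have to be unlocked in the correct order. Checking that the whole family closes without over-determination is the delicate bookkeeping that replaces the single clean cancellation available in the simplest case.
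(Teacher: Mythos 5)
Your proposal follows exactly the route the paper intends: it proves Lemma~\ref{L:ind4WPC45} by the same two-pronged scheme as Lemmas~\ref{L:ind4WPC34} and~\ref{L:ind4WPC35} --- differentiating the residue identity~\eqref{ZetaDef} with the $(4,5)$ data of Appendix~\ref{A:Reg45} to get the six rational identities (your pole orders $12,13,14,17,18,22$ and leading monomials match~\eqref{Diffs45}), reducing to the span $(y^2,yx,x^2,y,x,1)$ and invoking nonsingularity of the $6\times 6$ matrix, then supplementing with the $\phi_{13}\phi_{13'}=P\phi_{12}-Qf$ decomposition and derivative combinations that factor out a non-vanishing odd function such as $\wp_{1,1,1}$. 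The paper gives no further detail for this lemma beyond citing that same idea, so your write-up is a faithful (indeed more explicit) rendition of its proof.
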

The proof uses the idea of the proofs of Lemmas~\ref{L:ind4WPC34} and \ref{L:ind4WPC35}.

\vspace{-1mm}

\subsection*{Acknowledgements}
These results were presented and discussed on seminars of the School of Mathematics at Universities of Leeds, Loughborough, Glasgow, Edinburgh, and Heriot-Watt University. The authors are grateful to V.~Enolski for stimulating discussions, A.V.~Mikhailov, O.~Chalykh, A.P.~Veselov, C.~Athorne, H.W.~Braden, J.C.~Eilbeck, for hospitality and substantive comments. The authors are grateful to the anonymous referees for essential contribution to improvement of the paper.

\pdfbookmark[1]{References}{ref}
\LastPageEnding

\end{document}